\documentclass{amsart}

\usepackage{geometry}

\usepackage{amssymb}
\usepackage{amsmath}
\usepackage{mathrsfs}
\usepackage{enumitem}
\usepackage{graphicx}
\usepackage{float}
\usepackage{subcaption}
\usepackage{xcolor}

\usepackage[colorlinks=true,
            linkcolor=blue,
            citecolor=blue,
            urlcolor=blue]{hyperref}
            
\theoremstyle{plain}
\newtheorem{theorem}{Theorem}[section]

\newtheorem{proposition}[theorem]{Proposition}

\theoremstyle{definition}
\newtheorem{definition}[theorem]{Definition}
\newtheorem{remark}[theorem]{Remark}

\begin{document}

\title[Inversion formula for the cylindrical Radon transform]
{An analytic inversion formula for the \(n\)-dimensional cylindrical Radon transform}

\author{Jenna Jones and Fatma Terzioglu}
\address{Department of Mathematics, North Carolina State University, Raleigh, NC, USA}
\email[Corresponding author]{jkvarnel@ncsu.edu}
\email{fterzioglu@ncsu.edu}

\subjclass[2020]{Primary 44A12; Secondary 45Q05, 65R32, 92C55.}

\keywords{cylindrical Radon transform, inversion, reconstruction, photoacoustic tomography}

\begin{abstract}
We study the overdetermined problem of inverting the \(n\)-dimensional cylindrical Radon transform, which maps a function to its integrals over cylindrical surfaces. In practice, the cylindrical Radon transform arises in photoacoustic tomography as a measurement model for integrating line detectors. By exploiting its relationship with the Funk and Radon transforms, we derive an analytic inversion formula for the cylindrical Radon transform. We also present numerical implementations of the proposed inversion algorithm, demonstrating its stability.
\end{abstract}

\maketitle

\section{Introduction}
 The inverse problem of recovering a function from its integrals over families of geometric sets is central to integral geometry and tomography. A classical example is the inversion of the Radon transform, which reconstructs a function from its integrals over hyperplanes and underlies image reconstruction in computed tomography (see, for example, \cite{natterer_mathematics_2001,natterer_mathematical_2001,radon_uber_1917}). In this work, we study the analytic inversion of the \textit{cylindrical Radon transform} (CRT), which maps a function to its integrals over cylindrical surfaces.
The cylindrical Radon transform has applications in photoacoustic tomography, a hybrid imaging technique that combines optical excitation with acoustic detection \cite{kuchment_mathematics_2015,xu_photoacoustic_2006}. In photoacoustic tomography, a short optical pulse illuminates the object being imaged, causing rapid heating and subsequent thermoelastic expansion of the tissue. This expansion generates acoustic pressure waves, a phenomenon known as the photoacoustic effect \cite{tam_applications_1986}. The acoustic waves are measured on an imaging boundary and used to reconstruct the initial pressure distribution. Mathematical models and reconstruction methods for photoacoustic tomography have been studied extensively (see, for example, \cite{kuchment_mathematics_2015,xu_photoacoustic_2006}).

 Classically, point detectors are used in photoacoustic tomography to measure the acoustic waves on the imaging boundary. These measurements correspond to integrals of the initial pressure over spherical surfaces \cite{finch_determining_2004,kuchment_mathematics_2008}. However, point detectors provide only pointwise sampling of the wave field and therefore require dense spatial sampling to achieve stable reconstructions \cite{kuchment_mathematics_2008,xu_photoacoustic_2006}. To address some of these limitations, integrating line detectors have been introduced \cite{burgholzer_temporal_2007,burgholzer_thermoacoustic_2005}. When line detectors are used in circular geometry, the resulting data corresponds to integrals over cylindrical surfaces, leading to the cylindrical Radon transform \cite{haltmeier_frequency_2009,haltmeier_thermoacoustic_2007}. While this geometry offers advantages in reconstruction quality, it also introduces additional challenges in the inversion process \cite{haltmeier_frequency_2009}.

 The cylindrical Radon transform inversion is an overdetermined problem. Therefore, there are multiple inversions that agree under ideal conditions. However, some inversions may amplify noise, while others minimize the effects of noise. In the photoacoustic tomography setting, several geometries enabling inversion of the cylindrical Radon transform have been studied in the literature. Early work by Burgholzer et al. \cite{ burgholzer_temporal_2007,burgholzer_thermoacoustic_2005} developed reconstruction approaches using integrating line detectors and rotation-based geometries. In \cite{burgholzer_thermoacoustic_2005}, the authors studied one- and two-dimensional integrating detectors in thermoacoustic tomography, an imaging technique closely related to photoacoustic tomography, differing primarily in its use of microwave or radio-frequency excitation instead of optical excitation. Their setup required the object to rotate about a fixed axis while the line detectors remained stationary in a fixed plane. In this configuration, the measured data corresponds to the Radon transform of the initial pressure field, allowing the image to be reconstructed using known Radon transform inversion formulas. Later, Burgholzer et al. \cite{burgholzer_temporal_2007} derived an analytic universal backprojection formula for a cylindrical measurement geometry in which line detectors are positioned on a circle surrounding the object. The resulting formula reconstructs the initial pressure field directly from the measured data without requiring iterative reconstruction methods. 

 Limited-view reconstruction problems for thermoacoustic and photoacoustic tomography have been subsequently studied in  \cite{paltauf_experimental_2007,xu_reconstructions_2004}. In \cite{xu_reconstructions_2004}, the authors characterized which image features can be stably recovered from limited-view thermoacoustic tomography data and developed both analytic and algebraic reconstruction methods. Paltauf et al. \cite{paltauf_experimental_2007} investigated limited-view geometries in photoacoustic tomography and showed that such settings lead to incomplete reconstructions, although certain object boundaries can be recovered. The study compared frequency-domain, time-domain direct, and iterative reconstruction algorithms and found that all three methods produced comparable image reconstruction quality. While time-domain methods allow for image-quality refinements through weighting and correction procedures, the frequency-domain algorithms were more computationally efficient. Beyond the development of reconstruction algorithms, authors have also investigated the practical implementation of line-detector photoacoustic imaging systems. Nuster et al. \cite{nuster_photoacoustic_2010} studied photoacoustic imaging with integrating line detectors. They compared detector geometries, such as arc-shaped and box-shaped scanning geometries, and analyzed the corresponding reconstruction procedures. The numerical simulations and experimental studies demonstrated a trade-off between acquisition speed, image quality, and computational complexity.
 
 Haltmeier \cite{haltmeier_inversion_2011} derived exact analytic inversion formulas for the three-dimensional cylindrical Radon transform under a detector configuration in which all cylinder axes are orthogonal to the north pole. The reconstruction method proceeds through an integral relation involving the X-ray transform and the circular Radon transform. By applying known inversion formulas for these transforms, explicit reconstruction formulas for the cylindrical Radon transform are obtained.

In this work, we study the analytic inversion of the cylindrical Radon transform associated with an overdetermined family of cylindrical surfaces in \(\mathbb{R}^n\). We analyze its geometric properties and establish an integral identity relating the cylindrical Radon transform to the Radon and Funk transforms. This identity allows us to derive an analytic inversion method for the cylindrical Radon transform using known inversion formulas for the Funk and Radon transforms. We then demonstrate the stability of the proposed inversion method through numerical experiments based on a ball phantom.

The paper is organized as follows. In Section~\ref{CRT+prelims}, we formulate the cylindrical Radon transform, discuss its basic properties, and recall two related integral transforms together with their known inversion formulas. In Section~\ref{analyticalinversion}, we present our main theoretical results, including the analytic inversion method. Numerical reconstructions and an error analysis are provided in Section~\ref{numerical-results}. Finally, Section~\ref{conclusion} concludes the paper with a discussion of the results and directions for future work.

\section{Preliminaries}\label{CRT+prelims}
We begin by defining the cylindrical Radon transform and discussing some of its geometric properties, including the imaging domain and a parameterization of cylindrical surfaces in \(\mathbb{R}^n\). We then review the related Funk and Radon transforms, together with their associated inversion formulas.

\subsection{Definition of the Cylindrical Radon Transform}
Throughout the paper, $\mathbb{S}^{n-1}$ denotes the unit sphere in $\mathbb{R}^n$, and $|\mathbb{S}^{n-1}|$ denotes its $(n-1)$-dimensional surface measure.

We can define a cylinder in \(\mathbb{R}^n\) with radius \(r>0\) and centered around the axis line 
\[ \{p+tv: t\in \mathbb{R}\},\quad
 v\in \mathbb{S}^{n-1},\quad  p\in \mathbb{R}^n,\]
 by the set 
 \[\left\{y\in \mathbb{R}^n: |y-p|^2-[v\cdot(y-p)]^2=r^2\right\},\]
which consists of all points \(y\) at Euclidean distance \(r\) from the line through $p$ in the direction $v$. 

Let \(\mathcal{C}_c^\infty(\mathbb{R}^n)\) denote the space of smooth compactly supported functions on \(\mathbb{R}^n\). Throughout the paper we fix \(a>0\), and consider \(f\in C_c^\infty(\mathbb{R}^n)\) with \({\rm supp}\,f\subset B(0,a)\), the \(n\)-dimensional ball of radius \(a\) and centered at the origin. Since \(f\) vanishes outside of \(B(0,a)\), it suffices to consider cylinders whose axes intersect the sphere of radius \(a\) centered at the origin, denoted \(\mathbb{S}_a^{n-1}\). Thus we choose to restrict \(p\in \mathbb{S}_a^{n-1},\) and consider the cylinders
\begin{equation}\label{Cylinderset}\mathscr{C}(v,p,r):=\left\{y\in \mathbb{R}^n: |y-p|^2-[v\cdot (y-p)]^2=r^2\right\},\end{equation}
with parameters \((v,p,r)\) in the space \[Z:=\mathbb{S}^{n-1}\times \mathbb{S}_a^{n-1}\times (0,\infty),\] (See Figure \ref{fig: Cylinder visual}).
\begin{figure}[htbp]
\centering\includegraphics[width=.35\linewidth]{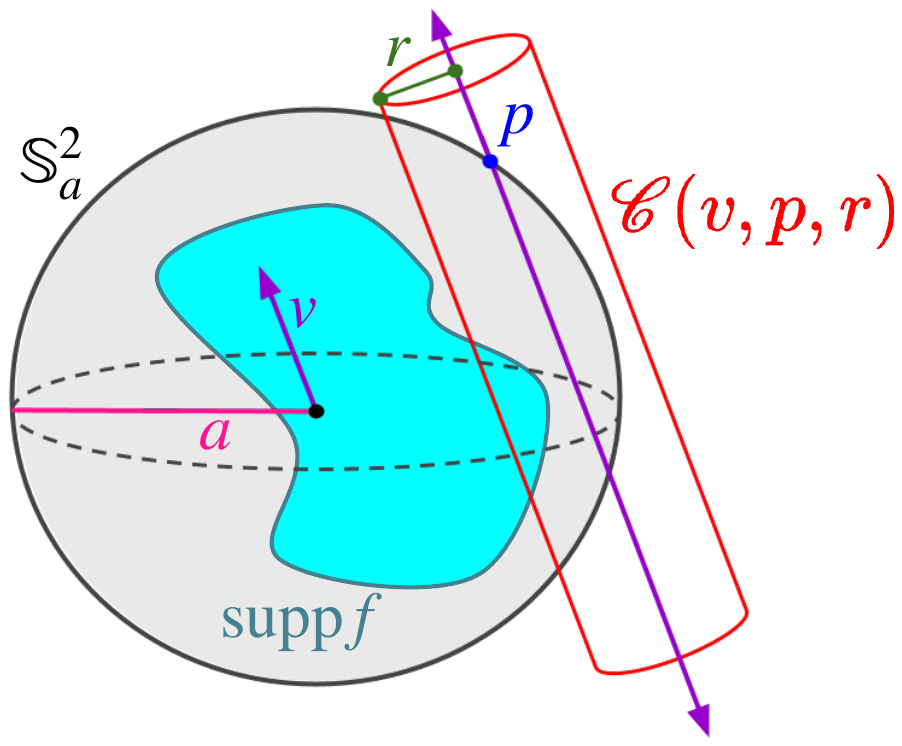}
        \caption{The imaging domain \(B(0,a)\) and a cylinder \(\mathscr{C}(v,p,r)\). }
        \label{fig: Cylinder visual}
    \end{figure}
    
\begin{definition}[The Cylindrical Radon Transform]\label{cylindrical Radon transform}
Let $f \in \mathcal{C}_c^\infty(\mathbb{R}^n)$ with ${\rm supp}\,f \subset B(0,a)$, for some $a>0$. The cylindrical Radon transform $Cf$ of $f$ is a function \(Cf:Z\to \mathbb{R}\) defined by \[Cf(v,p,r)=\int_{\mathscr{C}(v,p,r)} f(y) \,dS(y).\]
\end{definition}
Since \({\rm supp}\,f\subset B(0,a)\), for any \(v\in\mathbb{S}^{n-1}\) and \(p\in\mathbb{S}_a^{n-1}\), \(Cf(v,p,r)=0\) whenever \(r>2a\). Equivalently, we can write the cylindrical Radon transform using the one-dimensional Dirac delta distribution as follows.
\begin{proposition}\label{CRTdefdelta}
Let $f \in \mathcal{C}_c^\infty(\mathbb{R}^n)$ with ${\rm supp}\,f \subset B(0,a)$, for some $a>0$. Then,
 \begin{equation}\label{eq:CRTdefdelta}
    Cf(v,p,r)=2r\int_{\mathbb{R}^n}
f(y)\,
\delta\!\bigl(|y-p|^2-[v\cdot (y-p)]^2-r^2\bigr)\,
\,
dy. \end{equation}
\end{proposition}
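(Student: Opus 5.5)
The plan is to use the standard coarea (or "delta of a function") formula. For a smooth function \(\Phi\) on \(\mathbb{R}^n\) whose gradient does not vanish on the level set \(\{\Phi=0\}\), we have
\[
\int_{\mathbb{R}^n} f(y)\,\delta(\Phi(y))\,dy=\int_{\{\Phi=0\}} \frac{f(y)}{|\nabla\Phi(y)|}\,dS(y).
\]
We take \(\Phi(y)=|y-p|^2-[v\cdot(y-p)]^2-r^2\). By \eqref{Cylinderset}, its zero set is exactly \(\mathscr{C}(v,p,r)\).

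First compute the gradient. Writing \(w=y-p\), we get \(\nabla\Phi(y)=2w-2(v\cdot w)v=2P_{v^\perp}w\), where \(P_{v^\perp}\) is the orthogonal projection onto \(v^\perp\). Since \(|v|=1\),
\[
|P_{v^\perp}w|^2=|w|^2-(v\cdot w)^2 .
\]
On the cylinder this equals \(r^2\), so \(|\nabla\Phi|=2r\) there. In particular, the gradient is nonvanishing because \(r>0\), so the cylinder is a smooth hypersurface and the formula applies. It gives
\[
\int_{\mathbb{R}^n} f(y)\,\delta(\Phi(y))\,dy=\frac{1}{2r}\int_{\mathscr{C}(v,p,r)} f\,dS=\frac{1}{2r}\,Cf(v,p,r).
\]
Multiplying by \(2r\) yields \eqref{eq:CRTdefdelta}.

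To make the distributional identity rigorous, define \(\delta(\Phi)\) as the limit of \(\delta_\epsilon(\Phi)\) for an approximate identity \(\delta_\epsilon\). We then apply the coarea formula:
\[
\int f\,\delta_\epsilon(\Phi)\,dy=\int_{\mathbb{R}}\delta_\epsilon(s)\int_{\{\Phi=s\}}\frac{f}{|\nabla\Phi|}\,dS\,ds .
\]
Alternatively, one can use explicit coordinates adapted to the cylinder: write \(y=p+tv+\rho\,\omega\) with \(t\in\mathbb{R}\), \(\rho>0\), and \(\omega\) a unit vector in \(v^\perp\). Then \(dy=\rho^{n-2}\,d\rho\,dt\,d\omega\) and \(\Phi=\rho^2-r^2\). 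Since \(\delta(\rho^2-r^2)=\delta(\rho-r)/(2r)\) for \(\rho>0\), and the surface measure on the cylinder is \(r^{n-2}\,dt\,d\omega\), the identity follows directly.

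The main obstacle is making sure the inner level-set integral \(s\mapsto\int_{\{\Phi=s\}} f/|\nabla\Phi|\,dS\) is continuous at \(s=0\). The coordinate version handles this cleanly. Because \(f\) is smooth and compactly supported and \(r>0\), the function \(\rho\mapsto \rho^{n-2}\int\!\!\int f(p+tv+\rho\omega)\,dt\,d\omega\) is continuous near \(\rho=r\). Since it is integrated over a compact set, passing to the limit \(\epsilon\to0\) is justified.
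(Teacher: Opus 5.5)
Your proposal is correct and follows essentially the same route as the paper. Both use the level-set formula $\int f\,\delta(\Phi)\,dy=\int_{\{\Phi=0\}} f/|\nabla\Phi|\,dS$ with $\Phi(y)=|y-p|^2-[v\cdot(y-p)]^2-r^2$, compute $|\nabla\Phi|=2\sqrt{|y-p|^2-[v\cdot(y-p)]^2}=2r$ on the cylinder, and rescale by $2r$; your added justification via mollification and the adapted coordinates $y=p+tv+\rho\omega$ is extra rigor the paper omits (it uses those coordinates only later, for the vertical transform).
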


\begin{proof}
Using the level set function of a cylinder, $$\Phi_{v,p,r}(y) \;:=\; |y-p|^2 - [v\cdot (y-p)]^2 - r^2, 
\qquad y\in \mathbb{R}^n,$$ 
we can write
\begin{align*}
    Cf(v,p,r)=\int_{\mathscr{C}(v,p,r)} f(y)\, d\sigma(y)
=
\int_{\mathbb{R}^n} f(y)\, \delta\!\bigl(\Phi_{v,p,r}(y)\bigr)\, \bigl|\nabla_y \Phi_{v,p,r}(y)\bigr|\,dy.
\end{align*}
Fixing $v,p,r$ and differentiating with respect to $y$, we obtain
\[
\nabla_y \Phi_{v,p,r}(y)
=
2(y-p) - 2[v\cdot (y-p)]\,v
=
2\bigl(y-p-[v\cdot (y-p)]v\bigr).
\]
Therefore
\[
\bigl|\nabla_y \Phi_{v,p,r}(y)\bigr|
=
2\bigl|y-p-[v\cdot (y-p)]v\bigr|
=
2\sqrt{|y-p|^2-[v\cdot (y-p)]^2}.
\]
Substituting $\Phi_{v,p,r}$ and its gradient magnitude gives
\begin{align*}
Cf(v,p,r)
&=
\int_{\mathbb{R}^n}
f(y)\,
\delta\!\bigl(|y-p|^2-[v\cdot (y-p)]^2-r^2\bigr)\,
2\sqrt{|y-p|^2-[v\cdot (y-p)]^2}\,
dy\\
&=2r\int_{\mathbb{R}^n}
f(y)\,
\delta\!\bigl(|y-p|^2-[v\cdot (y-p)]^2-r^2\bigr)\,
\,
dy.
\end{align*}
Here the second equality follows because the delta distribution restricts the integral to the cylinder $\mathscr{C}(v,p,r)$, where
 $|y-p|^2-[v\cdot (y-p)]^2=r^2,$ and hence the weight reduces to \(2r\).
\end{proof}

In the special case \(v=e_n=(0,\ldots,0,1)\in \mathbb{R}^n\), we write \[p=(\bar{p},p_n),\quad y=(\bar{y},y_n).\] 

Then, \begin{equation*}\mathscr{C}(e_n,p,r):=\left\{y\in \mathbb{R}^n: |\bar{y}-\bar{p}|^2=r^2\right\}.\end{equation*} Accordingly, the cylindrical Radon transform becomes \begin{equation*}
    Cf(e_n,p,r)= 2r\int_\mathbb{R}\int_{\mathbb{R}^{n-1}} f(\bar{y},y_n)\,\delta(|\bar{y}-\bar{p}|^2-r^2)\,d\bar{y}\, dy_n.\end{equation*}
   To further simplify this integral, we introduce cylindrical coordinates centered at \(\bar{p}\) yielding 
 \[\bar{y}=\bar{p}+\rho\omega, \quad y_n=z,  \quad \rho>0, \quad \omega\in \mathbb{S}^{n-2}.\] Using a well-known Dirac delta identity, \[\delta(|\bar{y}-\bar{p}|^2-r^2)=\delta(\rho^2-r^2)=\frac{1}{2r}\delta(\rho-r),\] we obtain
 \begin{align*}
    Cf(e_n,p,r)&=\int_{\mathbb{R}} \int_{\mathbb{S}^{n-2}}\int_0^\infty f(\bar{p}+\rho\omega,z)\delta(\rho-r)\,\rho^{n-2}\,d\rho\, d\omega\, dz.
    \end{align*}
    Since \(r>0,\)  evaluating the integral with respect to \(\rho\) gives
    \begin{align}\label{vCRT}
     Cf(e_n,p,r)&=\int_{\mathbb{R}} \int_{\mathbb{S}^{n-2}} f(\bar{p}+r\omega,z)\,r^{n-2}\, d\omega\, dz.
    \end{align}
We call this integral the \textit{vertical cylindrical Radon transform}.

\subsection{Geometric Properties of the Cylindrical Radon Transform} Next, we present some geometric properties of the cylindrical Radon transform that will be useful in the sequel. 

\begin{proposition}[Properties of the Cylindrical Radon Transform]\label{prop:properties}
Let \(f \in \mathcal{C}_c^\infty(\mathbb{R}^n)\) with ${\rm supp}\,f \subset B(0,a)$, for some $a>0$, and let \(C f(v,p,r)\) denote the cylindrical Radon transform of \(f\), where \((v,p, r )\in Z\). Then, the following properties hold:
\begin{enumerate}[label=(\roman*)]

    \item \textbf{Evenness with respect to v:}
    \begin{equation}\label{Cf even}
    C f(v,p, r) = C f(-v,p, r).
    \end{equation}
    
    \item \textbf{Reflection symmetry:}
    \begin{equation*}
    C f(v,p, r) = C( f)(v,p-2(p\cdot v)v, r).
    \end{equation*}

    \item \textbf{Translation invariance:} Define \(T_cf(x)=f(x+c).\) Then \[C(T_cf)(v,p,r)=Cf(v,p+c,r).\] 
    
     \item \textbf{Rotation invariance:} For any \(A \in SO(n)\), the set of orthogonal \(n\times n\) matrices with determinant one, define \(M_A f(x) := f(Ax)\). Then
   \begin{equation}\label{M_ACf=CM_Af}
    C f(Av, Ap, r) = C(M_A f)(v,p, r).
    \end{equation}

    \item \textbf{Scaling invariance:} For  \(s >0\), let \(D_s f(x) := f(sx)\). Then
    \[
    C f(v,sp, sr) = s^{n-2} C(D_s f)(v,p, r).
    \]

\end{enumerate}
\end{proposition}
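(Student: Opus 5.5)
The plan is to prove the five identities one at a time. Each follows from a change of variables in the delta-function representation \eqref{eq:CRTdefdelta} of Proposition~\ref{CRTdefdelta}, together with the fact that the level-set function
\[
\Phi_{v,p,r}(y)=|y-p|^2-[v\cdot(y-p)]^2-r^2
\]
transforms simply under the relevant maps. Working with \eqref{eq:CRTdefdelta} rather than with surface measure means every Jacobian appears as an explicit factor.

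\textbf{Parts (i)--(iv).} These only require checking that the cylinder is preserved.
\begin{enumerate}[label=(\roman*)]
\item Since $[(-v)\cdot(y-p)]^2=[v\cdot(y-p)]^2$, we have $\Phi_{-v,p,r}=\Phi_{v,p,r}$. Hence the cylinders $\mathscr{C}(-v,p,r)$ and $\mathscr{C}(v,p,r)$ coincide.
\item Set $p'=p-2(p\cdot v)v$. Then $p'-p$ is a multiple of $v$, so $p'$ lies on the axis through $p$ in direction $v$. Moreover $|p'|=|p|=a$, so $p'\in\mathbb{S}_a^{n-1}$. Since $\Phi$ depends on $p$ only through the component of $y-p$ orthogonal to $v$, this gives $\Phi_{v,p',r}=\Phi_{v,p,r}$.
\item Substitute $y=x+c$ in \eqref{eq:CRTdefdelta}; the Jacobian is $1$. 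Then $\Phi_{v,p+c,r}(x+c)=\Phi_{v,p,r}(x)$, which yields $Cf(v,p+c,r)=C(T_cf)(v,p,r)$.
\item Substitute $y=Ax$ with $A\in SO(n)$; the Jacobian is $1$. Orthogonality of $A$ gives $|Ax-Ap|=|x-p|$ and $Av\cdot(Ax-Ap)=v\cdot(x-p)$. Therefore $\Phi_{Av,Ap,r}(Ax)=\Phi_{v,p,r}(x)$, and \eqref{M_ACf=CM_Af} follows.
\end{enumerate}

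\textbf{Part (v).} I would again substitute $y=sx$ in \eqref{eq:CRTdefdelta}. Three factors arise:
\begin{itemize}
\item the identity $\Phi_{v,sp,sr}(sx)=s^2\,\Phi_{v,p,r}(x)$;
\item the homogeneity $\delta(s^2t)=s^{-2}\delta(t)$;
\item the volume factor $dy=s^n\,dx$.
\end{itemize}
The prefactor of \eqref{eq:CRTdefdelta}, evaluated at radius $sr$, equals $2sr$.

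As a cross-check, I would reduce to $v=e_n$ using part (iv) and recompute with the vertical formula \eqref{vCRT}. Rescaling $\omega$-radius and axial variable separately there, the angular Jacobian $(sr)^{n-2}$ supplies the factor $s^{n-2}$, and the axial substitution $z=sz'$ must be accounted for in addition.

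\textbf{Main obstacle: the power of $s$ in (v).} This is where I expect the difficulty. The two computations above must agree, and the result has to be matched against the stated exponent $n-2$. Collecting all factors in the delta-function computation gives
\[
s\cdot s^{n}\cdot s^{-2}=s^{n-1},
\]
where the first $s$ is the one contributed by the prefactor $2sr$. The exponent $n-2$ is obtained only if that factor from $2sr$ is not included, or equivalently if the axial direction is not rescaled.

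I have not found a way to obtain $s^{n-2}$ with the normalization of \eqref{eq:CRTdefdelta} as written. So I would check at exactly this point which normalization of $dS$ (or which parameter is actually scaled) the stated $s^{n-2}$ corresponds to, rather than asserting that it follows. My own bookkeeping gives $s^{n-1}$. I also note that $sp\notin\mathbb{S}_a^{n-1}$ unless $s=1$, so (v) must be read with $Cf$ extended to arbitrary $p\in\mathbb{R}^n$ by the same formula.
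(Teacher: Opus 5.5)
Your treatment of (i)--(iv) is correct and is essentially the paper's argument. The paper handles (i) and (ii) geometrically. For (ii) it solves $|p-tv|^2=a^2$ to find the second intersection at $t=2p\cdot v$, which is your observation that $|p'|=|p|$ with $p'-p\parallel v$. It proves (iii) and (iv) by the same substitutions $y=x+c$ and $y=Ax$ in \eqref{eq:CRTdefdelta}.

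For (v), drop the hedging: your bookkeeping is right, and the stated exponent $n-2$ is wrong.
\begin{itemize}
\item The paper's proof starts from $Cf(v,sp,sr)=2r\int_{\mathbb{R}^n} f(y)\,\delta(\Phi_{v,sp,sr}(y))\,dy$. That is, it uses the prefactor $2r$, whereas \eqref{eq:CRTdefdelta} prescribes $2(sr)$ at radius $sr$. This dropped factor of $s$ is exactly what turns the correct $s^{n-1}$ into $s^{n-2}$.
\item Your vertical-formula cross-check agrees with a purely geometric argument. One has $\mathscr{C}(v,sp,sr)=s\,\mathscr{C}(v,p,r)$, and $(n-1)$-dimensional surface measure scales by $s^{n-1}$ (for $n=3$, areas scale by $s^2$). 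Hence the correct identity is $Cf(v,sp,sr)=s^{n-1}C(D_sf)(v,p,r)$.
\item Take any nonnegative $f\not\equiv 0$ and parameters with $C(D_sf)(v,p,r)>0$ and $s\neq 1$. Such an $f$ violates the identity as stated, so (v) is false as written, not merely unproven.
\item The factor $s^{n-2}$ is correct only for the unweighted quantity $\int f\,\delta(\Phi_{v,p,r})\,dy=Cf(v,p,r)/(2r)$.
\end{itemize}
Part (v) is never used later in the paper, so nothing downstream is affected.

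Your remark about the domain is also valid: $sp$ and $p+c$ generally leave $\mathbb{S}^{n-1}_a$ (and $T_cf$, $D_sf$ need not be supported in $B(0,a)$). So (iii) and (v) require extending $Cf$ to all $p\in\mathbb{R}^n$ by the same formula, which the paper does tacitly.
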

\begin{proof}
    \medskip
    \noindent\textbf{(i).} The evenness with respect to \(v\) follows directly from the fact that \(v\) and \(-v\) define the same central axis, and thus the same cylinder. 
    
    \medskip
    \noindent\textbf{(ii).} Property (ii) holds since the point \(p-2(p\cdot v) v\) is the other intersection point of the axis line with the sphere \(\mathbb{S}^{n-1}_a\). Indeed, let \[r(t):=p-tv,\] which parameterizes the cylinder axis. The intersection points of this line with the sphere \(\mathbb{S}^{n-1}_a\) satisfy
    \begin{equation*}
        (p-tv)\cdot (p-tv)=a^2.
    \end{equation*} Simplifying and using \(p\cdot p=a^2\) gives 
    \begin{equation*}
         t(t-2p\cdot v)=0.
    \end{equation*} The solution occurring at \(t=0\) corresponds to the point \(p\). The second intersection occurs at \(t_0=2p\cdot v.\) Thus, \begin{equation*}
        r(t_0)=p-2(p\cdot v)v,
    \end{equation*} gives the second point where the line \(r(t)=p-tv\) intersects the sphere \(\mathbb{S}_a\). Hence, both parameters \((v,p,r)\) and \((v,p-2(p\cdot v) v,r)\) define the same cylinder, which implies (ii).
    
    \medskip
    \noindent\textbf{(iii).} Let \(c\in \mathbb{R}^n.\) Then, \begin{equation}\label{eq: translation}
C(T_cf)(v,p,r)=2r\int_{\mathbb{R}^n} f(x+c)\delta(|x-p|^2-[v\cdot (x-p)]^2-r^2)\, dx.\end{equation} Setting \(z=x+c\) gives \[C(T_cf)(v,p,r)=2r\int_{\mathbb{R}^n} f(z)\delta(|z-(p+c)|^2-[v\cdot (z-(p+c))]^2-r^2)\, dz=Cf(v,p+c,r).\]

\medskip
    \noindent\textbf{(iv).}
     Let \(A\in SO(n)\). Using the change of variable \(y=Ax\), we get \(dy=dx\) since \(\det(A)=1\). Then \begin{align*}
    Cf(Av,Ap,r)
    &=2r\int_{\mathbb{R}^n} f(y)\,\delta( |y-Ap|^2-[Av\cdot (y-Ap)]^2-r^2)\, dy\\
     &=2r\int_{\mathbb{R}^n} f(Ax)\,\delta( |A(x-p)|^2-[Av\cdot A(x-p)]^2-r^2)\, dx. \end{align*}
    Since \(A\) is a rotation matrix, we have 
    \[Av\cdot A(x-p)=(Av)^TA(x-p)=v^TA^TA(x-p)=v\cdot (x-p),\] and \[|A(x-p)|=|x-p|.\] Thus,
     \begin{align*}
      Cf(Av,Ap,r) &=2r\int_{\mathbb{R}^n} f(Ax)\delta(|x-p|^2-[v\cdot (x-p)]^2-r^2) dx\\
    &=  C(M_Af)(v,p,r).
\end{align*}
 \medskip
    \noindent\textbf{(v).}
    Let \(s>0\). We compute \begin{align*}
    Cf(v,sp,sr)
     &=2r\int_{\mathbb{R}^n} f(y)\,\delta( |y-sp|^2-[v\cdot (y-sp)]^2-s^2r^2)\, dy.
     \end{align*}
     Let \(y=sx\) and consequently, \(dy=s^n dx\). Then
     \begin{align*}
   Cf(v,sp,sr)
    &=2r\int_{\mathbb{R}^n} f(sx)\delta\Big(s^2 \big(|x-p|^2-\big[v\cdot (x-p)]^2-r^2\big)\Big)\, s^n\, dx
    \end{align*}
    We recall the scaling property of the Dirac delta distribution that states \(\delta(at)=\frac{\delta(t)}{a}\) for any scalar \(a>0\). Hence, 
     \begin{align*}
    Cf(v,sp,sr)
    &=2r\int_{\mathbb{R}^n} f(sx)\frac{\delta( |(x-p)|^2-[v\cdot (x-p)]^2-r^2)}{s^2} s^n dx\\
    &=2s^{n-2}r\int_{\mathbb{R}^n} f(sx)\delta( |x-p|^2-[v\cdot (x-p)]^2-r^2) dx\\
    &=s^{n-2}C(D_sf)(v,p,r).
\end{align*}

\end{proof}

\subsection{The Cylindrical Radon Transform of the Characteristic Function of a Ball in Three-dimensions}
In this section, we present an analytical formula for the cylindrical Radon transform of the characteristic function of an $n$-dimensional ball. This formula will be used for simulating forward data for a ball phantom in section \ref{numerical-results}. 
\begin{proposition}
\label{prop:ballCRT}
For $c\in\mathbb{R}^3$ and $t>0$, let
\[
f(x) = \chi_{B(c,t)}(x)
=
\begin{cases}
1, & |x-c| \le t,\\
0, & |x-c| > t.
\end{cases}
\]
 
 For \((v,p,r)\in Z\), define 
 \[
 d := |\mathrm{proj}_{v^\perp}(p-c)|.
 \]
 
 Then,  
 \begin{equation}\label{eq:ball CRT}
Cf(v,p,r)
=
\begin{cases}
\displaystyle
\int_0^{2\pi}
2r\sqrt{t^2 - d^2 - r^2 - 2dr\cos\theta}\, d\theta,
& \text{if } \max\{0,d-t\}\le r \le d+t,\\[12pt]
0, & \text{otherwise}.
\end{cases}
\end{equation}
\end{proposition}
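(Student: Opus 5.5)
The plan is to reduce to a vertical cylinder using the invariances of Proposition~\ref{prop:properties}, and then compute the vertical cylindrical Radon transform \eqref{vCRT} with $n=3$ by integrating over $z$ first. Choose $A\in SO(3)$ with $Ae_3=v$. Rotation invariance \eqref{M_ACf=CM_Af} gives $Cf(v,p,r)=C(M_Af)(e_3,A^{-1}p,r)$, and $M_Af=\chi_{B(A^{-1}c,t)}$ because rotations preserve balls. The quantity $d$ is also preserved, since
\[
|\mathrm{proj}_{v^\perp}(p-c)|=|\mathrm{proj}_{e_3^\perp}(A^{-1}p-A^{-1}c)|.
\]
So I may assume $v=e_3$, with $d=|\bar p-\bar c|$ where $\bar{\cdot}$ denotes the first two coordinates. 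The restriction $p\in\mathbb{S}^{2}_a$ plays no role in the computation, because \eqref{vCRT} only uses $\bar p$.

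Next I apply \eqref{vCRT} with $n=3$ and $\omega=(\cos\phi,\sin\phi)$:
\[
Cf(e_3,p,r)=\int_0^{2\pi}\!\!\int_{\mathbb{R}} \chi_{B(c,t)}(\bar p+r\omega,z)\,r\,dz\,d\phi .
\]
For fixed $\phi$, the point $(\bar p+r\omega,z)$ lies in the ball exactly when
\[
(z-c_3)^2\le t^2-|\bar p-\bar c+r\omega|^2 .
\]
The $z$-integral is therefore the length of a vertical chord, namely $2\sqrt{t^2-|\bar p-\bar c+r\omega|^2}$ when the radicand is nonnegative and $0$ otherwise.

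To evaluate the radicand, I measure $\phi$ from the direction of $\bar p-\bar c$ (when $d=0$ any direction works). Writing $\theta$ for this angle, the law of cosines gives
\[
|\bar p-\bar c+r\omega|^2=d^2+r^2+2dr\cos\theta .
\]
Since the angular integral runs over a full period, this shift of $\phi$ does not change its value. The result is the integrand $2r\sqrt{t^2-d^2-r^2-2dr\cos\theta}$ in \eqref{eq:ball CRT}.

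Finally I handle the case distinction. As $\theta$ varies, $d^2+r^2+2dr\cos\theta$ ranges over $[(d-r)^2,(d+r)^2]$. The radicand is therefore positive for some $\theta$ exactly when $|d-r|<t$. With $r>0$, this condition is $\max\{0,d-t\}\le r\le d+t$ (together with $r\ge\max\{0,d-t\}$ also $r< d+t$; the boundary values contribute zero). Outside this range the cylinder misses the open ball, so $Cf=0$.

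The one delicate point, which I expect to be the main obstacle, is that for $r>t-d$ the radicand is negative for some $\theta$. There the integrand in \eqref{eq:ball CRT} must be read as the positive part $\sqrt{(\cdot)_+}$, i.e.\ the integral is effectively restricted to those $\theta$ with $\cos\theta\le (t^2-d^2-r^2)/(2dr)$. I would state this convention explicitly; alternatively, I would take the real part of the square root, which vanishes on the bad set. With that reading, the chord-length computation above proves the formula.
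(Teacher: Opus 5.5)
Your proof is correct and is essentially the paper's argument: reduce to $v=e_3$ via the rotation invariance in Proposition~\ref{prop:properties}, apply the vertical transform \eqref{vCRT}, integrate the vertical chord length $2\sqrt{t^2-d^2-r^2-2dr\cos\theta}$ in $z$, and read off the support condition $|d-r|\le t$. The differences are cosmetic (you skip the translation and replace the in-plane rotation by a shift of the angle), and your explicit positive-part reading of the integrand is right: the paper includes this as the factor $\mathcal{X}_{[0,\infty)}(t^2-d^2-r^2-2dr\cos\theta)$ in its penultimate display and then silently drops it in the stated formula.
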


\begin{proof}
The cylindrical Radon transform of \(\mathcal{X}_{B(c,t)}\) is the surface area of the cylinder inside the ball \(B(c,t)\). By Proposition \ref{prop:properties} (iii), \[Cf(v,p,r)=C\mathcal{X}_{B(c,t)}(v,p,r)=C\mathcal{X}_{B(0,t)}(v,p-c,r).\] Let \(q:=p-c\), and \(A\in SO(3)\) such that $Av=e_3$. Since the ball \(B(0,t)\) is rotationally invariant, by Proposition \ref{prop:properties} (iv), we have 
\[
C\mathcal{X}_{B(0,t)}(v,q,r)=C\mathcal{X}_{B(0,t)}(e_3,Aq,r).
\]
By the rotation invariance on the \(xy\)-plane, we further have  \[C\mathcal{X}_{B(0,t)}(e_3,Aq,r)=C\mathcal{X}_{B(0,t)}(e_3,(d,0,0),r),\] where
\[
d = |\mathrm{proj}_{e_3^\perp} Aq| =|\mathrm{proj}_{v^\perp} (p-c)|.
\] 
Therefore, by the definition of the vertical cylindrical Radon transform \eqref{vCRT},
\begin{equation}\label{doubleCfball}
Cf(v,p,r)
=C\mathcal{X}_{B(0,t)}(e_3,(d,0,0),r)=
\int_0^{2\pi}
\int_{\mathbb{R}}
\mathcal{X}_{B(0,t)}(d + r\cos\theta, r\sin\theta, z)\, dz\, r\, d\theta.
\end{equation}

The integrand is non-vanishing if and only if
\[
(d + r\cos\theta)^2 + r^2\sin^2\theta + z^2 \le t^2,
\]
or equivalently
\[
z^2 \le t^2 - d^2 - r^2 - 2dr\cos\theta.
\]
Hence, for each angle $\theta$ such that the right-hand side is nonnegative,
\[
z \in \left[-\sqrt{t^2 - d^2 - r^2 - 2dr\cos\theta},
\sqrt{t^2 - d^2 - r^2 - 2dr\cos\theta}\right].
\]
For a fixed angle $\theta$, the integrand in \eqref{doubleCfball} equals $1$ precisely when
\[
|z|
\le
\sqrt{t^2-d^2-r^2-2dr\cos\theta},
\]
provided that
\[
t^2-d^2-r^2-2dr\cos\theta \ge 0.
\]
Hence the inner integral is the length of the interval of admissible
$z$-values:
\begin{align*}
\int_{\mathbb{R}}&
\mathcal{X}_{B(0,t)}(d+r\cos\theta,r\sin\theta,z)\,dz \\
&=
2\sqrt{t^2-d^2-r^2-2dr\cos\theta}\,
\mathcal{X}_{[0,\infty)}
\!\left(t^2-d^2-r^2-2dr\cos\theta\right).
\end{align*}
The square-root is real for at least one $\theta$ if and only if \[t^2 - d^2 - r^2 - 2dr\cos\theta\geq 0,\] for some \(\theta\in[0,2\pi).\) The left-hand side is maximized when \(\cos\theta=-1.\) Thus, the condition for intersection becomes \[t^2-(d-r)^2\geq 0,\] or equivalently,
\[
|d - r| \leq t.
\]
Since \(r\geq 0\), this yields the condition
\[
\max\{0,d-t\} \le r \le d+t.
\]
In the case where this condition fails, the cylinder does not intersect the ball, so the transform is zero. Substituting into 
\eqref{doubleCfball} yields
\begin{align}
Cf(v,p,r)
&=
\int_0^{2\pi}
2r\sqrt{t^2 - d^2 - r^2 - 2dr\cos\theta}\,
\mathcal{X}_{[0,\infty)}(t^2 - d^2 - r^2 - 2dr\cos\theta)\, d\theta\\[6pt]
&=\begin{cases}
\displaystyle
\int_0^{2\pi}
2r\sqrt{t^2 - d^2 - r^2 - 2dr\cos\theta}\, d\theta,
& \text{if } \max\{0,d-t\}\le r \le d+t,\\[10pt]
0, & \text{otherwise}.\notag
\end{cases}
\end{align}
\end{proof}
We note that \eqref{eq:ball CRT} is an elliptic integral, so it cannot be simplified further.

\subsection{Some Auxiliary Integral Transforms}
In this section, we recall the Radon and Funk transforms and their inversions which will be useful in developing our inversion method for the cylindrical Radon transform. We start with the Radon transform, which maps a function to its integrals over hyperplanes (see, for example, \cite{natterer_mathematics_2001,radon_uber_1917}).
\begin{definition}[The Radon Transform]
    For \(f\in\mathcal{S}(\mathbb{R}^n)\), the Schwartz space of rapidly decreasing smooth functions, the Radon transform \(Rf:\mathbb{S}^{n-1}\times \mathbb{R} \to \mathbb{R}\) is given by
\begin{equation}\label{Radon transform}
    Rf(\omega,s)=\int_{x\cdot \omega =s} f(x)dx=\int_{\mathbb{R}^n} f(x)\delta (x\cdot \omega-s) dx,
\end{equation}
where \(\omega\in \mathbb{S}^{n-1}\) denotes the normal direction of the hyperplane and \(s\in \mathbb{R}\) is its signed distance from the origin. 
\end{definition}
 The Radon transform is even: 
\begin{equation}\label{Rfeven}
Rf(-\omega,-s)=Rf(\omega,s),
\end{equation}
and satisfies the following translation and rotation invariance properties:
\begin{align}
R(T_p f)(\omega,s) &= Rf(\omega,s+\omega\cdot p), \label{R(T_pf)=T_pRf}\\
R(M_A f)(\omega,s) &= Rf(A\omega,s):=M_ARf(\omega,s), \label{M_ARf=RM_Af}
\end{align}
where $T_p f(x)=f(x+p)$ and $M_A f(x)=f(Ax)$ for $A \in SO(n)$. 

In some arguments, we will use the notations \begin{align*}
R_sf(\omega):=Rf(\omega,s),\quad \quad R_\omega f(s):=Rf(\omega,s), 
\end{align*}  where the subscript indicates the variable being held fixed.

We now turn to the inversion of \(n\)-dimensional Radon transform which is as follows \cite{natterer_mathematics_2001}: 
\begin{equation}\label{ndimradoninversion}
f(x) = \frac{(-1)^{(n-1)/2}}{2(2\pi)^{n-1}} 
\begin{cases}
    \int_{\mathbb{S}^{n-1}}(Rf)^{(n-1)}(\omega,x\cdot \omega) d\omega,\quad \text{odd}\,\, n\\
    (-1)^{-1/2}\int_{\mathbb{S}^{n-1}}\mathcal{H}(Rf)^{(n-1)}(\omega,x\cdot \omega) d\omega, \quad \text{even}\,\, n,
\end{cases}
\end{equation}
where  \[\mathcal{H}g(t):=\frac{1}{\pi}p.v.\int_{\mathbb{R}}\frac{g(s)}{t-s}\, ds,\quad \text{and} \quad (Rf)^{(n-1)}(\omega,s):=\frac{\partial^{(n-1)}}{\partial s^{(n-1)} }R(\omega,s).\]

In practical applications, the inversion formula (\ref{ndimradoninversion}) is often implemented via the filtered backprojection algorithm \cite{natterer_mathematical_2001}.

We will also use the Funk transform, which integrates a function over \((n-2)\)-dimensional subspheres of the unit sphere \(\mathbb{S}^{n-1}\subset \mathbb{R}^n\), lying in hyperplanes through the origin (see, for example, \cite{funk_uber_1913,helgason_integral_2011,rubin_introduction_2015}). These subspheres are commonly referred to as great subspheres or great circles in the case that \(n=3\). 

\begin{definition}[The Funk Transform]
    For \(f\in \mathcal{C}^\infty(\mathbb{S}^{n-1})\), the Funk Transform, \(Ff:\mathbb{S}^{n-1}\to \mathbb{R}\) is defined by 
\[
    F g(v)=\int_{\mathbb{S}^{n-1}} g(\omega)\delta(\omega\cdot v)\,d\omega.
\] 
\end{definition}
 The Funk transform satisfies the rotational invariance property:
 \begin{equation}\label{FM_Af=M_AFf}
     F(M_Ag)(v)=Fg(A v),
\end{equation} 
where \(M_A g(\omega):=g(A\omega)\) with \(A\in SO(n)\). 

We define the averaged dual Funk transform of a function \( g \in C^\infty(\mathbb{S}^{n-1}) \) at a point \( \omega \in \mathbb{S}^{n-1} \) and angular distance \( \gamma \in [0, \pi/2]\)
as the average of the Funk transforms of \( g \) over all great circles lying at a distance \( \gamma \) from the point \( \omega  \) and denote it as \(
\check{F}g_\gamma(\omega)
\). More precisely, 
\begin{equation}\label{averaged_dual_funk}
\check{F}g_\gamma(\omega) = \int_{d(\omega, \xi) = \gamma} Fg(\xi) \, d\mu(\xi),
\end{equation}
where the integral is taken over all great circles \( \xi \subset \mathbb{S}^{n-1} \) at angular distance \( p \) from the point \( \omega \), and \( d\mu \) denotes the normalized, invariant measure on this family of great subspheres. 

An inversion formula by Helgason \cite{helgason_integral_2011} states that if 
 \(g\in C_{even}^\infty(\mathbb{S}^{n-1})\), then for \(\omega\in \mathbb{S}^{n-1}\), 
 \begin{equation}\label{ndimFTI}
     g(\omega)=\frac{2^{n-2}}{(n-3)!|\mathbb{S}^{n-1}|}
\Bigg[
\Big(\frac {d}{d(t^2)}\Big)^{n-2}
\int_{0}^{t}\check{F}g_{\cos ^{-1}q}(\omega )
q^{n-2}(t^{2}-q^{2})^{\frac{n}{2}-2} \, dq
\Bigg]_{t=1}.\end{equation}

\section{Inversion of the Cylindrical Radon Transform}\label{analyticalinversion}
 In this section, we study the inversion of the overdetermined cylindrical Radon transform associated with the $(2n-1)$-parameter family of cylinders defined in \eqref{Cylinderset}. Our approach is based on an integral identity that relates cylindrical averages to the classical Radon and Funk transforms. This identity reduces the inversion problem to weighted radial integration and the application of the known inversion formulas for the Funk and Radon transforms.

\begin{proposition}\label{Prop:Cf=RFf}
   Let \(f\in \mathcal{C}_c^\infty(\mathbb{R}^n)\) such that \({\rm supp}\,f\subset B(0,a)\). For all \(r>0\) and \(s\geq 0\), define \[g(s,r)=|\mathbb{S}^{n-3}|\frac{(r^2-s^2)^{(n-4)/2}}{r^{n-3}}.\] Then, 
    \begin{equation}\label{propeq}
       \int_{s}^\infty Cf(v,p,r)g(s,r)dr=\int_{\mathbb{S}^{n-1}} Rf(\sigma, s+\sigma\cdot p)\delta (\sigma \cdot v)d\sigma =F(R_{s}T_{p}f)(v). \end{equation}
\end{proposition}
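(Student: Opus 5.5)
The plan is to start from the middle expression in \eqref{propeq}, write everything as an integral against $f$, and recognize the resulting weight as $g(s,\cdot)$ evaluated at the distance from $y$ to the cylinder axis. Throughout I assume $n\ge 3$, so that $\mathbb{S}^{n-3}$ makes sense.

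\emph{The last equality.} This is immediate from the translation property \eqref{R(T_pf)=T_pRf}: $R_sT_pf(\sigma)=R(T_pf)(\sigma,s)=Rf(\sigma,s+\sigma\cdot p)$. Applying the Funk transform in $\sigma$ and evaluating at $v$ gives exactly the middle integral.

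\emph{Rewriting the middle integral.} Insert the delta form \eqref{Radon transform} of the Radon transform:
\[
\int_{\mathbb{S}^{n-1}} Rf(\sigma,s+\sigma\cdot p)\,\delta(\sigma\cdot v)\,d\sigma=\int_{\mathbb{R}^n} f(y)\Big[\int_{\mathbb{S}^{n-1}}\delta\big(\sigma\cdot(y-p)-s\big)\,\delta(\sigma\cdot v)\,d\sigma\Big]dy .
\]
Interchanging the order of integration is justified by compact support and smoothness, e.g.\ by approximating the deltas by mollifiers. The factor $\delta(\sigma\cdot v)$ restricts $\sigma$ to the great subsphere $\mathbb{S}^{n-2}_v=\mathbb{S}^{n-1}\cap v^\perp$ with its standard measure; this is the same normalization used in the definition of $F$. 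On that subsphere, $\sigma\cdot(y-p)=\sigma\cdot u$, where
\[
u=\mathrm{proj}_{v^\perp}(y-p),\qquad |u|=\rho(y)
\]
is the distance from $y$ to the axis $\{p+tv\}$.

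\emph{The main computation.} The key step is evaluating the inner integral
\[
I(\rho,s)=\int_{\mathbb{S}^{n-2}}\delta(\rho\,\sigma\cdot e-s)\,d\sigma
\]
for a unit vector $e$. By the Funk--Hecke formula, or by slicing $\mathbb{S}^{n-2}$ by the height $t=\sigma\cdot e$ with $d\sigma=(1-t^2)^{(n-4)/2}\,dt\,d\theta$ and $\theta\in\mathbb{S}^{n-3}$, one should get
\[
I(\rho,s)=\frac{|\mathbb{S}^{n-3}|}{\rho}\Big(1-\frac{s^2}{\rho^2}\Big)^{(n-4)/2}
=|\mathbb{S}^{n-3}|\frac{(\rho^2-s^2)^{(n-4)/2}}{\rho^{n-3}}=g(s,\rho)\qquad\text{for }\rho>s\ge 0,
\]
and $I(\rho,s)=0$ for $\rho<s$. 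I expect this to be the main obstacle. It requires the right normalization of the two composed deltas, and care in the cases $n=3$ and $n=4$. For $n=3$ the weight $(\rho^2-s^2)^{-1/2}$ is singular but integrable; there $\mathbb{S}^{0}$ has two points and $|\mathbb{S}^{0}|=2$, which I would check directly from the circle parametrization. For $n=4$ the weight is constant in $\rho$. The set where $\rho=s$ has measure zero in $y$, so it causes no trouble.

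\emph{Coarea step.} We now have $\int_{\mathbb{R}^n} f(y)\,g(s,\rho(y))\,\chi_{\{\rho(y)>s\}}\,dy$. The level sets $\{\rho=r\}$ are exactly the cylinders $\mathscr{C}(v,p,r)$ from \eqref{Cylinderset}. Since $|\nabla\rho|=1$ away from the axis (the gradient computation in Proposition~\ref{CRTdefdelta} gives this), the coarea formula yields $dy=dS\,dr$. Therefore
\[
\int_{\mathbb{R}^n} f(y)\,g(s,\rho(y))\,\chi_{\{\rho(y)>s\}}\,dy=\int_s^\infty g(s,r)\int_{\mathscr{C}(v,p,r)}f\,dS\,dr=\int_s^\infty Cf(v,p,r)\,g(s,r)\,dr ,
\]
which is the first expression in \eqref{propeq}. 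Finiteness of this integral follows because $Cf(v,p,r)=0$ for $r>2a$.
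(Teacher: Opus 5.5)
Your proof is correct, but it takes a different route from the paper's. The paper never computes a kernel in $y$. It first treats the vertical case $v=e_n$, writes $Cf(e_n,q,r)$ via \eqref{vCRT}, and slices $f$ in the axial variable $z$. It then invokes an external $(n-1)$-dimensional identity (Lemma 21 of \cite{terzioglu_inversion_2015}) that turns $|\mathbb{S}^{n-3}|\int_{\mathbb{S}^{n-2}}\int_s^\infty f_z(\bar q+r\omega)(r^2-s^2)^{(n-4)/2}\,r\,dr\,d\omega$ into $\int_{\mathbb{S}^{n-2}}Rf_z(\omega,s+\bar q\cdot\omega)\,d\omega$. After integrating in $z$ it recognizes $F(R_sT_qf)(e_n)$, and it transports the identity to arbitrary $v$ through the rotation invariances of $C$, $R$ and the surface measure. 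You instead keep $(v,p)$ general and write the right-hand side as $\int f(y)K(y)\,dy$ with $K(y)=\int_{\mathbb{S}^{n-1}\cap v^\perp}\delta(\sigma\cdot(y-p)-s)\,d\sigma$. You evaluate $K(y)=g(s,\rho(y))\chi_{\{\rho(y)>s\}}$ by slicing the great subsphere; your normalization $(1-t^2)^{(n-4)/2}$ and the $n=3$ check $I=2(\rho^2-s^2)^{-1/2}$ are right. You then reassemble the cylinders with the coarea formula for the distance to the axis. Your kernel computation is in effect a self-contained proof of the cited lemma, done once in $\mathbb{R}^n$ rather than slice by slice. It shows where the weight $g$ comes from, removes the rotation bookkeeping, and shows the identity holds for every $p\in\mathbb{R}^n$, not just $p\in\mathbb{S}^{n-1}_a$. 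The paper's route buys modularity: it reuses Proposition~\ref{prop:properties} and \eqref{M_ARf=RM_Af} and outsources the only real computation to a known result. Both arguments rest on the same formal delta calculus, so your mollifier justification matches the paper's level of rigor. One extra integrability point is worth recording: for $s=0$ the kernel behaves like $|\mathbb{S}^{n-3}|/\rho$ near the axis. This is locally integrable because the axis has codimension $n-1\ge 2$.
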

\begin{proof}
     We first prove the proposition for the vertical cylindrical Radon transform with the cylinder axis passing through the point \(q=(\bar{q},q_n)\in \mathbb{S}_a^{n-1}\). We then extend the result to the general case using the invariance properties of the cylindrical Radon, Funk, and the classical Radon transforms. By definition of the vertical cylindrical Radon transform \eqref{vCRT}, we have
    \begin{align*}
    \int_{s}^\infty Cf(e_n,q,r)\,g(s,r)\, dr
    &=\int_{s}^\infty  \int_{-\infty}^\infty \int_{\mathbb{S}^{n-2}} f(\bar{q}+r \omega,z)\, r^{n-2}\, d\omega\, dz\, g(s,r)\, dr\\
    &=|\mathbb{S}^{n-3}|\int_{s}^\infty  \int_{-\infty}^\infty \int_{\mathbb{S}^{n-2}} f(\bar{q}+r \omega,z)\,(r^2-s^2)^{(n-4)/2}\,r\,d\omega \,dz\, dr. \end{align*}
    Applying Fubini's Theorem, and using \(f_z(\Bar{x}):=f(\Bar{x},z)\), \((\Bar{x},z)\in \mathbb{R}^{n-1}\times \mathbb{R}\),
    \begin{align*}
    \int_{s}^\infty Cf(e_n,q,r)\,g(s,r)\, dr
    &=  |\mathbb{S}^{n-3}|\int_{-\infty}^\infty \int_{\mathbb{S}^{n-2}} \int_{s}^\infty f_z(\bar{q}+r \omega)\,(r^2-s^2)^{(n-4)/2}\,r\, dr\, d\omega\, dz.
    \end{align*}
    Using the identity (See \cite{terzioglu_inversion_2015}, Lemma 21),
    \[|\mathbb{S}^{n-3}| \int_{\mathbb{S}^{n-2}}\int_{s}^\infty f_z(\bar{q}+r\omega)\,(r^2-s^2)^{(n-4)/2}\,r\,dr\,d\omega=\int_{\mathbb{S}^{n-2}} Rf_z(\omega,s+\bar{q}\cdot \omega) \,d\omega,\]
    we obtain
    \begin{align*}
    \int_{s}^\infty Cf(e_n,q,r)\,g(s,r)\,dr
    &=  \int_{-\infty}^\infty \int_{\mathbb{S}^{n-2}}  Rf_z(\omega,s+\bar{q}\cdot \omega)\,d\omega \,dz\\
    &=  \int_{-\infty}^\infty \int_{\mathbb{S}^{n-2}}  \int_{\mathbb{R}^{n-1} }f(\bar{x},z)\, \delta (\bar{x} \cdot \omega-s-\bar{q}\cdot \omega)\,d\bar{x}\,d\omega \,dz,
    \end{align*}
    by the definition of the Radon transform. Note that \(\bar{x} \cdot \omega=x\cdot(\omega,0)\) for \(x=(\bar{x},z)\in \mathbb{R}^{n-1}\times \mathbb{R}\) and similarly, \(\bar{q}\cdot \omega=q\cdot(\omega,0)\). Hence, by switching the order of integration once more, we obtain
    \[ \int_{s}^\infty Cf(e_n,q,r)g(s,r)dr  
     =   \int_{\mathbb{S}^{n-2}}  Rf((\omega,0),s+q\cdot (\omega,0))d\omega= \int_{\mathbb{S}^{n-2}}  R(T_{q}f)((\omega,0),s)d\omega, 
    \]
    by translation invariance of the Radon transform \eqref{R(T_pf)=T_pRf}.
    
    We now express this identity in terms of the Funk transform. Using the Dirac-delta distribution, we write
    \begin{align*}
    \int_{s}^\infty Cf(e_n,q,r)\,g(s,r)\,dr  
    &=  \int_{\mathbb{S}^{n-2}}  R_s(T_{q}f)((\omega,0))\,d\omega\\
    &=\int_{-1}^1\int_{\mathbb{S}^{n-2}} R_{s} (T_{q}f)((\sqrt{1-\sigma_n^2} \omega, \sigma_n))\,\delta(\sigma_n)\,(1-\sigma_n^2)^\frac{n-3}{2}\,d\omega \,d\sigma_n.
    \end{align*} 
    Let \(\sigma=(\sqrt{1-\sigma_n^2} \omega, \sigma_n)\in \mathbb{S}^{n-1}.\) Then, \(d\sigma=(1-\sigma_n^2)^{\frac{n-3}{2}}\, d\omega\, d\sigma_n,\) and thus
    \begin{equation}\label{propeqen}
     \int_{s}^\infty Cf(e_n,q,r)\,g(s,r)\,dr  
    =\int_{\mathbb{S}^{n-1}} R_s(T_{q}f)(\sigma)\,\delta(\sigma\cdot e_n)\,d\sigma=F(R_sT_{q}f)(e_n),
    \end{equation}
    which yields \eqref{propeq} when \(v=e_n\). 
    
    We now generalize this identity using the invariance properties of the cylindrical Radon, Funk, and the classical Radon transforms.  Let \(v\in \mathbb{S}^{n-1}\) be arbitrary. There exists a rotation matrix \(A\in SO(n)\) such that \(Ae_n=v\). We take \(q:=A^{-1}p\). Using the rotational invariance of the cylindrical Radon transform (Proposition \ref{prop:properties} (iv)),
     we write \[Cf(v,p,r)=Cf(Ae_n,Aq,r)=C(M_A f)(e_n,q,r).\] Then, equation \eqref{propeqen} implies that 
     \begin{align*}
    \int_{s}^\infty Cf(v,p,r)g(s,r)dr  &= \int_{s}^\infty C(M_Af)(e_n,q,r)g(s,r)dr=F(R_sT_{q}M_Af)(e_n).
    \end{align*}
    Observing that
    \begin{align*}
    T_{q}(M_Af)(x)&=M_Af(x+q)=f(Ax+Aq)=T_{Aq}f(Ax)=M_A(T_{p}f)(x),
    \end{align*}  
    and using the definition of the Funk transform, we obtain
    \begin{align*}
    \int_{s}^\infty Cf(v,p,r)g(s,r)dr &=F(R_sM_AT_{p}f)(e_n)\\
    &=\int_{\mathbb{S}^{n-1}} R(M_AT_{p}f)(\omega, s) \delta(\omega\cdot e_n)\, d\omega\\
    &=\int_{\mathbb{S}^{n-1}} R(T_{p}f)(A\omega, s) \delta(\omega\cdot e_n)\, d\omega,
    \end{align*} 
    where we have used the rotational invariance of the Radon transform \eqref{M_ARf=RM_Af}.
    Changing variables by letting \(\sigma=A\omega\), implies
    \begin{align*}
    \int_{s}^\infty Cf(v,p,r)g(s,r)dr 
    &=\int_{\mathbb{S}^{n-1}} R(T_{p}f)(\sigma, s) \delta(A^{-1}\sigma\cdot e_n)\, d\sigma\\
    &=\int_{\mathbb{S}^{n-1}} R(T_{p}f)(\sigma, s) \delta(\sigma\cdot Ae_n)\, d\sigma\\
    &=\int_{\mathbb{S}^{n-1}} R(T_{p}f)(\sigma, s) \delta(\sigma\cdot v)\, d\sigma,
    \end{align*} 
    where we have used rotational invariance of the Lebesgue measure on \(\mathbb{S}^{n-1}\) and the dot product. Finally, by the definition of the Funk transform, 
    \begin{align*}
    \int_{s}^\infty Cf(v,p,r)g(s,r)dr 
    &=\int_{\mathbb{S}^{n-1}} R(T_{p}f)(\sigma, s) \delta(\sigma\cdot v)\, d\sigma=F(R_sT_{p}f)(v),
    \end{align*}
    which completes the proof.           
\end{proof} 
The identity \eqref{propeq} expresses the weighted averages of the cylindrical Radon transform with respect to the radial variable as a composition of the Radon and Funk transforms. This decomposition is the key step in deriving an inversion formula for the cylindrical Radon transform. We first perform the weighted integration with respect to \(r\) on the left hand side of Equation \eqref{propeq}. Then we apply the \(n\)-dimensional Funk transform inversion \eqref{ndimFTI}. Lastly, we apply the \(n\)-dimensional Radon transform inversion \eqref{ndimradoninversion} to reconstruct the function \(f\).
\begin{theorem}[Inversion of the Cylindrical Radon Transform]
Let \(f\in C_c^\infty(\mathbb{R}^n)\) such that \(\operatorname{supp}\, f\subset B(0,a)\) for some \(a>0.\) Let \(g(r,s) \) be as defined in Proposition \ref{Prop:Cf=RFf}, and \(R^{-1}\) and \(F^{-1}_{v\rightarrow w}\) formally denote the Radon and Funk transform inversions, respectively. Then, 
\begin{equation}\label{eq:midinversionformula}
    Rf(\omega,s+p\cdot \omega)=F^{-1}_{v\rightarrow \omega} \Big\{\int_s^\infty Cf(v,p,r)g(s,r)\, dr\Big\},
\end{equation}
and thus 
\begin{equation}\label{eq:inversionformula}
    f=R^{-1}\bigg\{F^{-1}_{v\rightarrow \omega} \Big\{\int_s^\infty Cf(v,p,r)g(s,r)\, dr\big\}\bigg\}.
\end{equation}
\end{theorem}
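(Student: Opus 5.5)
The plan is to read the theorem off Proposition~\ref{Prop:Cf=RFf}, then apply the two known inversions in turn: first the Funk inversion \eqref{ndimFTI} in the variable $v$, then the Radon inversion \eqref{ndimradoninversion}.

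\textbf{Step 1: fix $(s,p)$ and identify the function being Funk-transformed.} Fix $s\ge 0$ and $p\in\mathbb{S}^{n-1}_a$. Define
\[
h_{s,p}(\sigma):=R_s(T_pf)(\sigma)=Rf(\sigma,s+\sigma\cdot p),\qquad \sigma\in\mathbb{S}^{n-1},
\]
where the second equality is the translation identity \eqref{R(T_pf)=T_pRf}. Proposition~\ref{Prop:Cf=RFf} then reads
\[
\int_s^\infty Cf(v,p,r)g(s,r)\,dr = F h_{s,p}(v)\qquad\text{for all } v\in\mathbb{S}^{n-1}.
\]
So the left-hand side, as a function of $v$, is exactly the Funk transform of $h_{s,p}$. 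Since $f$ is smooth with compact support, $T_pf\in C_c^\infty$. Hence $Rf$ is smooth on $\mathbb{S}^{n-1}\times\mathbb{R}$, and $h_{s,p}\in C^\infty(\mathbb{S}^{n-1})$.

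\textbf{Step 2: the parity obstruction.} Helgason's formula \eqref{ndimFTI} only inverts the Funk transform on even functions, since $F$ annihilates odd ones. I expect this to be the main obstacle. By \eqref{Rfeven}, $h_{s,p}(-\sigma)=Rf(\sigma,-s+\sigma\cdot p)$, which equals $h_{-s,p}(\sigma)$ rather than $h_{s,p}(\sigma)$. So $h_{s,p}$ is not even in general, and $F$ only determines its even part
\[
h^{e}_{s,p}(\sigma)=\tfrac12\bigl[Rf(\sigma,s+\sigma\cdot p)+Rf(\sigma,-s+\sigma\cdot p)\bigr].
\]
To handle this I will interpret $F^{-1}_{v\to\omega}$ as Helgason's inversion \eqref{ndimFTI}, which returns $h^e_{s,p}(\omega)$.

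I would then recover the full value $Rf(\omega,s+p\cdot\omega)$ by one of two routes:
\begin{itemize}
\item use the overdetermination in $p$. Replacing $p$ by $-p$, and using the symmetry of Proposition~\ref{prop:properties}(ii) to relabel axis points, gives access to $Rf(\omega,\pm s\pm\omega\cdot p)$, and the plus and minus combinations can be disentangled;
\item or, more simply, note that only $Rf(\omega,\cdot)$ at all real arguments is needed for \eqref{ndimradoninversion}. Taking $s$ ranging over $[0,\infty)$ and choosing $p$ with $\omega\cdot p$ of either sign covers every argument. Once one argument value is known (e.g.\ $s=0$ gives $h_{0,p}$, which is even), the even-part data determine the rest.
\end{itemize}
In writing this up I would state \eqref{eq:midinversionformula} with $F^{-1}$ understood formally, as the theorem says, and add a remark on this parity point.

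\textbf{Step 3: apply the Radon inversion.} Having $Rf(\omega,s+p\cdot\omega)$ for all $\omega$ and all $s\ge0$, vary $p$ over $\mathbb{S}^{n-1}_a$; for $\omega$ fixed, $p\cdot\omega$ ranges over $[-a,a]$. Then $t=s+p\cdot\omega$ sweeps $[-a,\infty)$. Combined with evenness \eqref{Rfeven}, this gives $Rf(\omega,t)$ for all $(\omega,t)$; alternatively, $Rf$ vanishes for $|t|>a$ by the support assumption. Applying \eqref{ndimradoninversion} to this function of $(\omega,t)$ yields $f$, which is \eqref{eq:inversionformula}.

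The only analytic points to check are that the smoothness and compact support of $f$ justify each formula: differentiability in $t$ for $(Rf)^{(n-1)}$, and smoothness of $h$ in $\sigma$ for Helgason's formula. Both follow from $f\in C_c^\infty$.
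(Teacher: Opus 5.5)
Your argument is the paper's own: its proof is just the remark that the theorem follows directly from Proposition~\ref{Prop:Cf=RFf}, i.e.\ invert the Funk transform of $\sigma\mapsto R_s(T_pf)(\sigma)$ in $v$, then invert the Radon transform. Your parity point is correct, and the paper's one-line proof does not address it. Since $R_s(T_pf)(-\sigma)=Rf(\sigma,-s+\sigma\cdot p)$, for $s>0$ Helgason's formula \eqref{ndimFTI} returns only $\tfrac12\bigl[Rf(\omega,s+\omega\cdot p)+Rf(\omega,-s+\omega\cdot p)\bigr]$. So \eqref{eq:midinversionformula} holds literally, in general, only at $s=0$; for $s>0$ the $F^{-1}$ has to be read purely formally, as the statement hedges. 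Your second route is the one to write up: take $s=0$, where the function is even and $\omega\cdot p$ sweeps $[-a,a]$ as $p$ ranges over $\mathbb{S}^{n-1}_a$. This is exactly the reduction in Remark~\ref{remark1} and the numerical section, and it is all that \eqref{eq:inversionformula} needs. Drop the first route as described. The relabeling in Proposition~\ref{prop:properties}(ii) depends on $v$, so it cannot be held fixed through a Funk inversion in $v$. It also adds no information, since $\sigma\cdot p$ is unchanged on the great sphere $\sigma\perp v$. Moreover, at a fixed $s>0$ the data at $p$ and $-p$ give only two symmetric sums of the four values $Rf(\omega,\pm s\pm\omega\cdot p)$. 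That pairing alone therefore disentangles nothing; you must vary $s$, and the simplest choice, $s=0$, is your second route again.
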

\begin{proof}
    The proof follows directly from Proposition \ref{Prop:Cf=RFf}.
\end{proof}

\begin{remark}\label{remark1}
    To reconstruct \(f\) from the Radon data obtained in \eqref{eq:midinversionformula}, complete Radon data are needed. Since \(\operatorname{supp} f \subset B(0,a)\), it is enough for the values \(s+p\cdot \omega\) to cover the interval \([-a,a]\). Because \(p\in \mathbb{S}_a^{n-1}\), this condition is already satisfied when \(s=0\).
\end{remark}

\section{Numerical Experiments}\label{numerical-results}

In this section, we present the results of the numerical implementation of the analytical inversion formula \eqref{eq:inversionformula} in three dimensions using a ball phantom. 
\subsection{Simulation of Forward Measurements}
The imaging domain \(B(0,a)\) is taken to be the unit ball, corresponding to \(a=1\) in Definition \ref{cylindrical Radon transform}. The phantom is chosen as \(f=\mathcal{X}_{B(c,t)}\), the indicator function of a ball centered at
\(c=(-0.2,0.2,0.3)\)
with radius
\(t=\frac12\) (see Figure \ref{3D phantom and imaging domain}). Since the imaging domain is contained in the unit ball, cylinders of radius \(r>2\) do not intersect the domain, and hence the forward data vanish for \(r>2\).

\begin{figure}[htbp]
\centering
    \begin{subfigure}{0.38\textwidth}
        \centering
        \includegraphics[width=\linewidth]{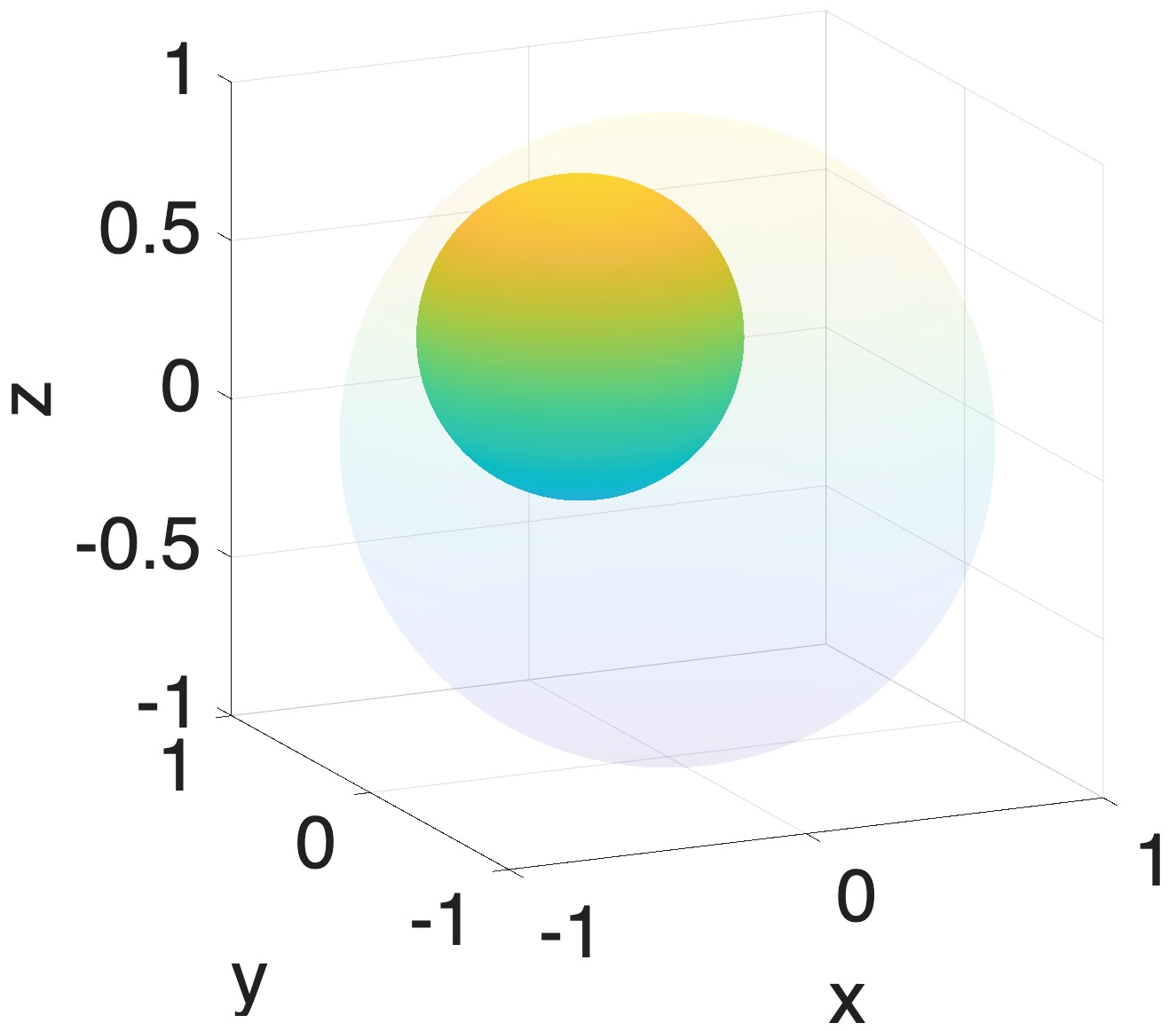}
        \caption{}
        \label{3D phantom and imaging domain}
    \end{subfigure}
    \begin{subfigure}{0.38\textwidth}
        \centering
        \includegraphics[width=\linewidth]{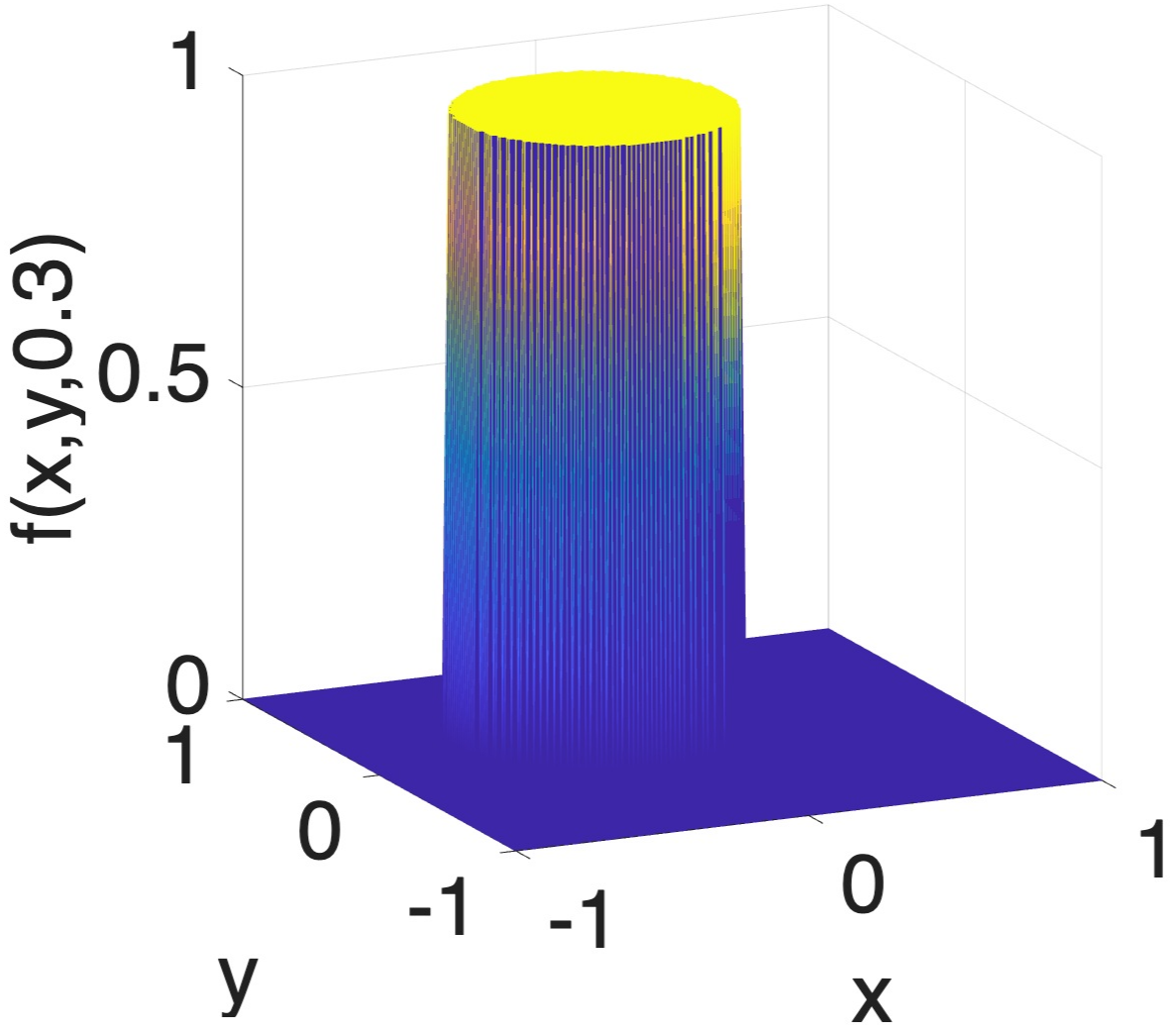}
        \caption{}
        \label{xy-phantom}
    \end{subfigure}
    \begin{subfigure}{0.38\textwidth}
        \centering
        \includegraphics[width=\linewidth]{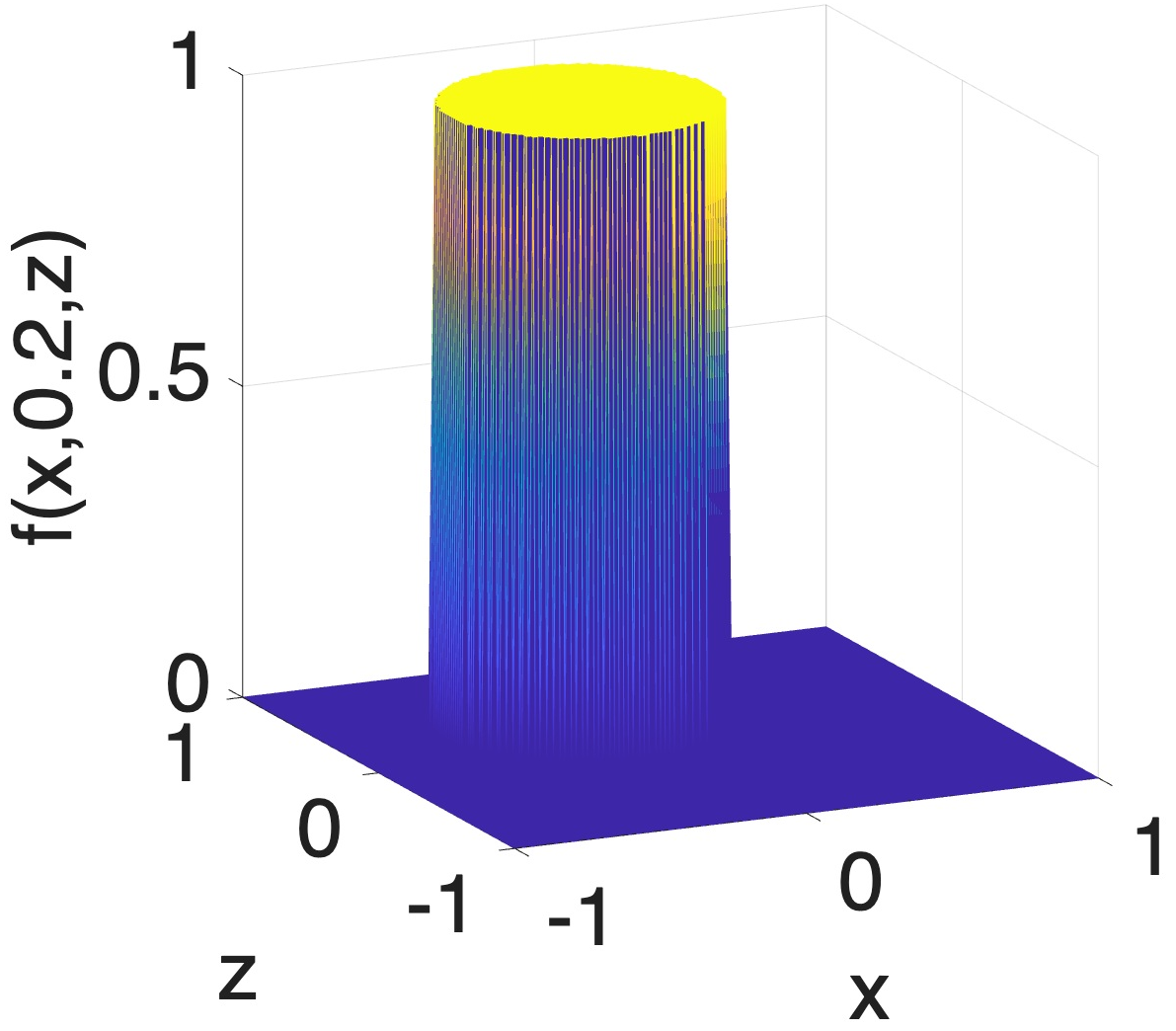}
        \caption{}
    \end{subfigure}
    \begin{subfigure}{0.38\textwidth}
        \centering
        \includegraphics[width=\linewidth]{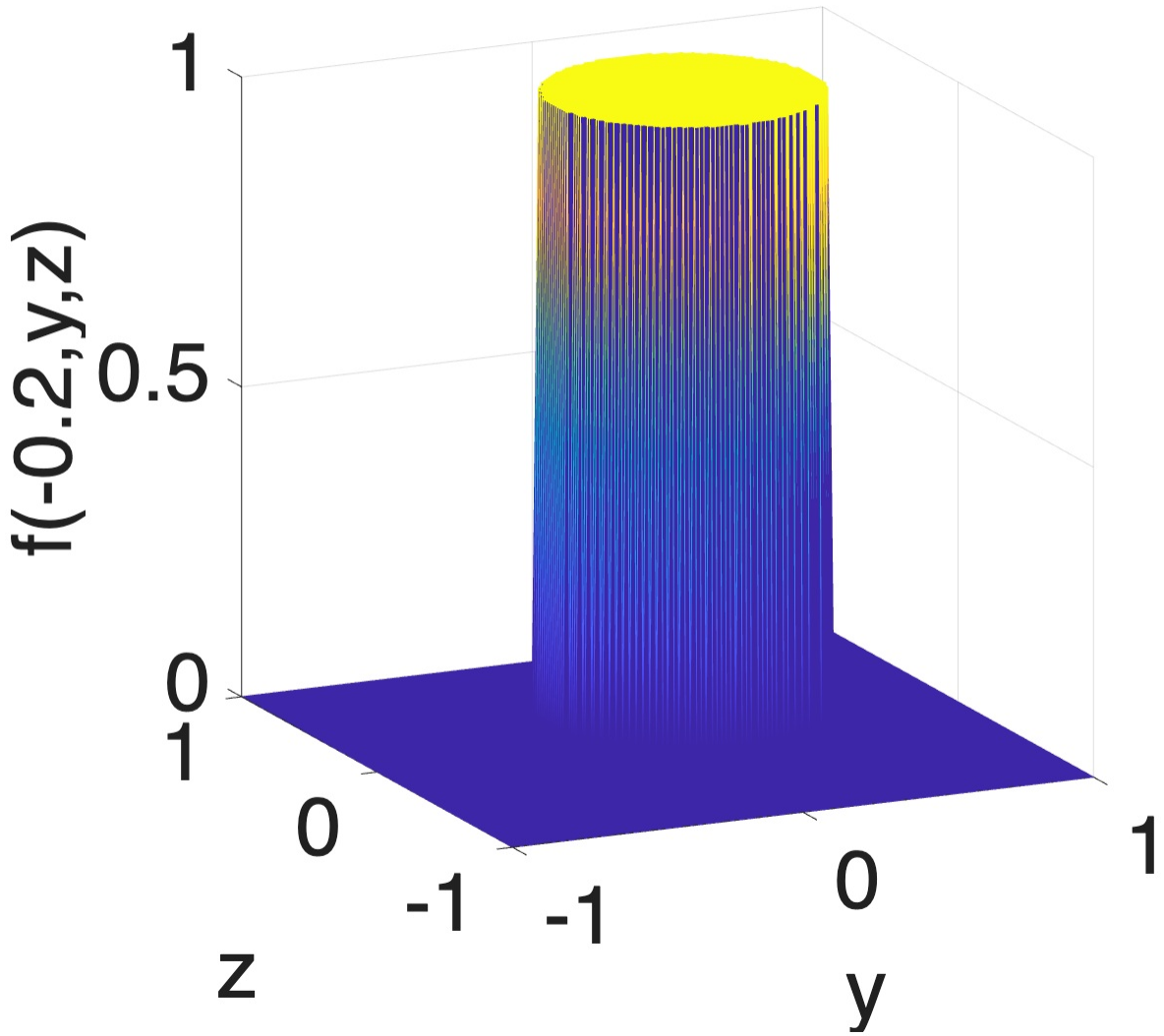}
        \caption{}
        \label{3D-yz-phantom}
    \end{subfigure}
    \caption{(a) Imaging domain \(B(0,a)\) and ball phantom \(f=\mathcal{X}_{B(c,t)}\) where \(c=(-0.2,0.2,0.3)\) and \(t=0.5\). (b) Surface plot of \(f(x,y,0.3)\). (c) Surface plot of \(f(x,0.2,z)\). (d) Surface plot of \(f(-0.2,y,z)\).}
    \end{figure}

The cylindrical Radon transform variables \((v,p,r)\in \mathbb{S}^2\times \mathbb{S}^2\times (0,2]\) were sampled as follows. The points \(p\) and directions \(v\) are sampled using Fibonacci lattices, yielding quasi-uniform distributions on \(\mathbb{S}^2\). In our implementation, we used \(13{,}000\) directions \(v\in\mathbb{S}^2\) and \(500\) points \(p\in\mathbb{S}^2\). We uniformly sampled \(500\) radius values \(r\in(0,2]\).

We computed the forward data by numerically computing the elliptic integral \eqref{eq:ball CRT}. 
Figure \ref{Transform data} shows the plot of \(Cf(v,p,r)\) as a function of the radius \(r\) for fixed \(p\) and \(v\). Figure \ref{Noisy transform data} shows the same profile for the noisy forward data where white Gaussian noise with signal-to-noise ratio (SNR) of \(20\) dB was added to the 3D array.
\begin{figure}[htbp]
\centering
\begin{subfigure}{0.42\textwidth}
        \centering
        \includegraphics[width=\linewidth]{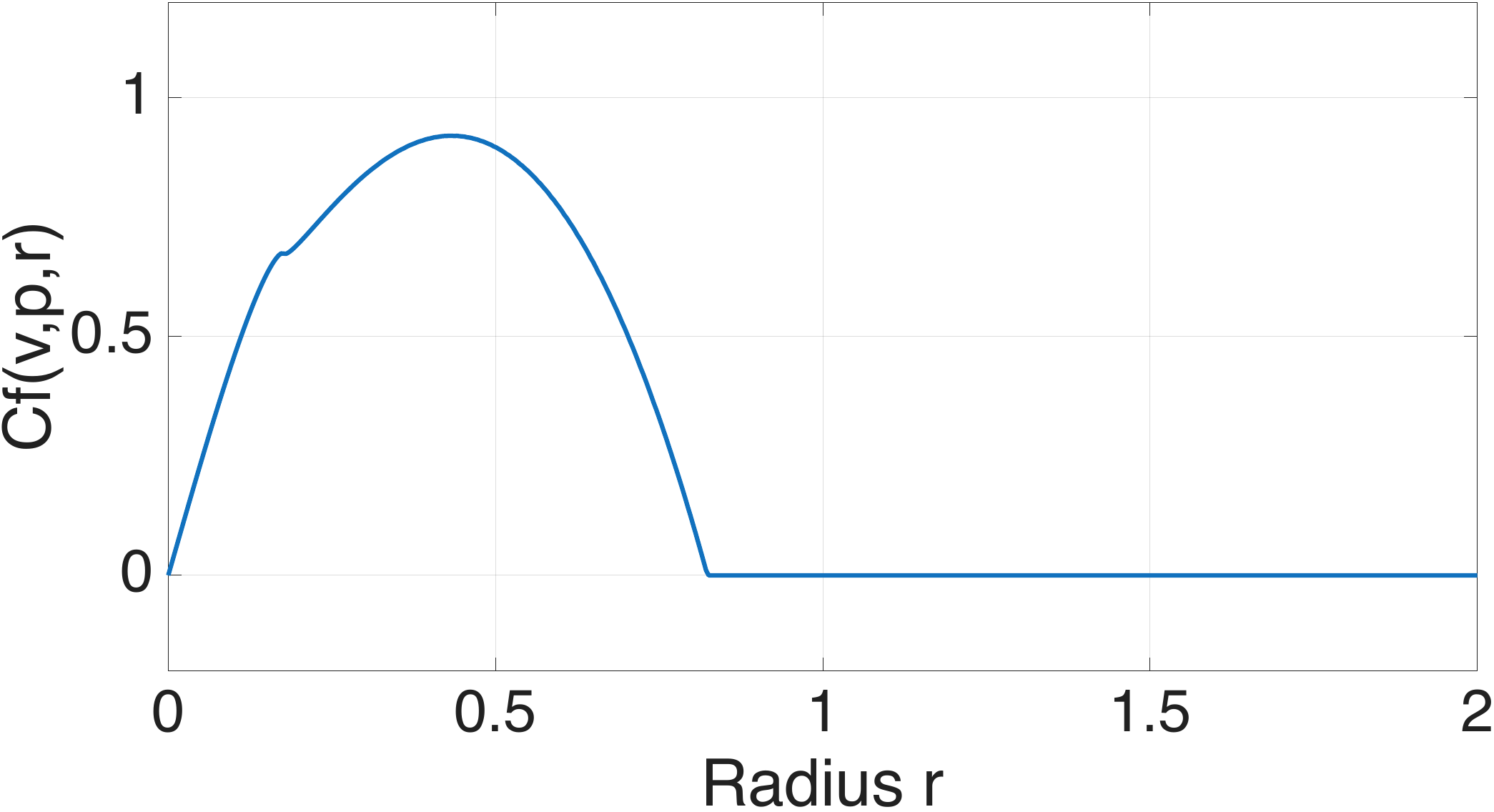}
        \caption{}
        \label{Transform data}
    \end{subfigure}
    \begin{subfigure}{0.42\textwidth}
        \centering
        \includegraphics[width=\linewidth]{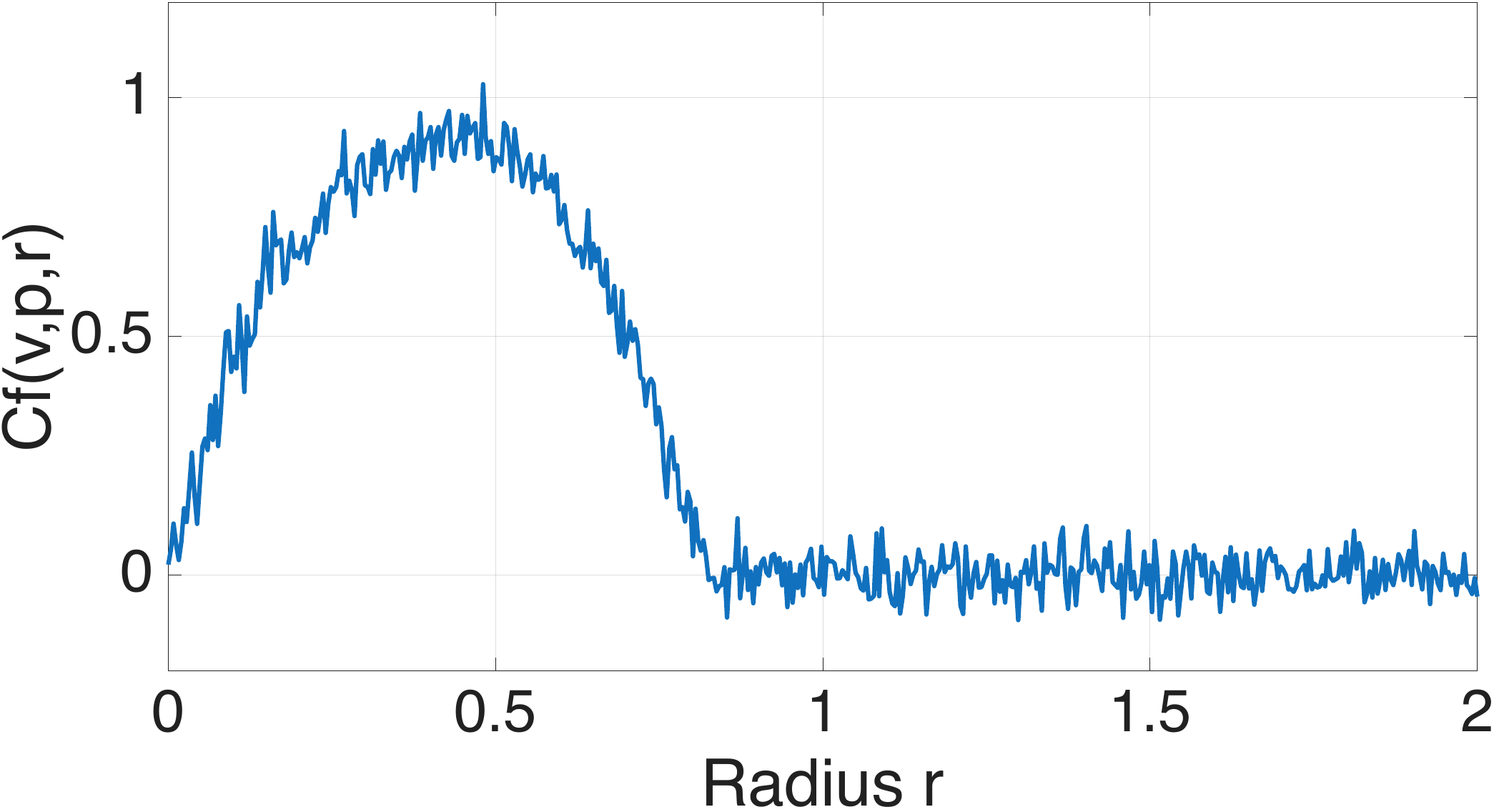}
        \caption{}
        \label{Noisy transform data}
    \end{subfigure}
    \caption{The cylindrical Radon transform data \(Cf(v,p,r)\) for varying \(r\) when \(v\approx(0.01,0,1)\) and \(p\approx(0.06,0, 1)\) (a) without added noise, and (b) with Gaussian noise added (SNR 20 dB).} 
        \label{fig:CRT-visual}
    \end{figure}
    
    The behavior of the forward data in Figure \ref{Transform data} around \(r=0.25\) is due to one side of the cylinder becoming tangent to the boundary of the ball phantom at that radius. We test our inversion algorithm using both the noiseless and noisy forward data. 
 \subsection{Implementation of the Inversion Algorithm} We now describe the numerical implementation of the inversion algorithm. The integral relation in three dimensions reads as \begin{equation}\label{3Dintrelation}\int_{s}^{2} Cf(v,p,r)g(s,r)dr = F(R_s T_{p} f)(v),\quad g(s,r)=2(r^2-s^2)^{-1/2}.\end{equation} We first numerically approximate the weighted integral on the left hand side of \eqref{3Dintrelation}. Although \eqref{3Dintrelation} holds for all \(0\leq s\leq 2\), in view of Remark \ref{remark1}, it suffices to take \(s=0\). This simplifies the weight function to \(g(0,r)=\frac{2}{r}\)
reducing the computational complexity of the weighted integration step while still providing complete data needed for the Radon transform inversion.  We write \begin{equation}\label{eq:G}
    G(v,p):=\int_0^2 Cf(v,p,r)\frac{2}{r}\, dr=F(RT_{p}f)(v).
\end{equation} 
For each \(p\), \(G(v,p)\) is even with respect to 
\(v\) since the cylindrical Radon transform is even with respect to \(v\) (see Proposition \ref{prop:properties}). The application of the Funk transform inversion formula \eqref{ndimFTI} in three dimensions with respect to \(v\) yields: \begin{equation}\label{eq:R=F-1}
    Rf(\omega,\omega \cdot p)=RT_{p}f(\omega)= \frac{1}{2\pi}\Big[\frac{d}{du}\int_0^u \hat{F}G_{\cos^{-1}t} (\omega,p)(u^2-t^2)^\frac{-1}{2}\,t\, dt\Big]_{u=1},
\end{equation} where \(\omega\in\mathbb{S}^2\) is sampled using a Fibonacci lattice for \(9,000\) directions. Since we work with ball phantoms, we have a closed-form expression  for the Radon transform data, which we use as an intermediate check point. 

The Radon transform of a three-dimensional ball can be computed analytically:
\begin{equation}\label{eq:Rfball3D}
    R\mathcal{X}_{B(c,t)}(\omega,s)=
\bigg\{
\begin{array}{ll}
\pi(t^2-(s-\omega\cdot c)^2),\quad |s-\omega\cdot c|\leq t, \\
0, \quad \text{otherwise},
\end{array}
\end{equation}
see the Appendix~\ref{app:proof of 3D ball} for the proof.

\begin{figure}[htbp]
\centering
    \begin{subfigure}{0.32\textwidth}
        \centering
        \includegraphics[width=\linewidth]{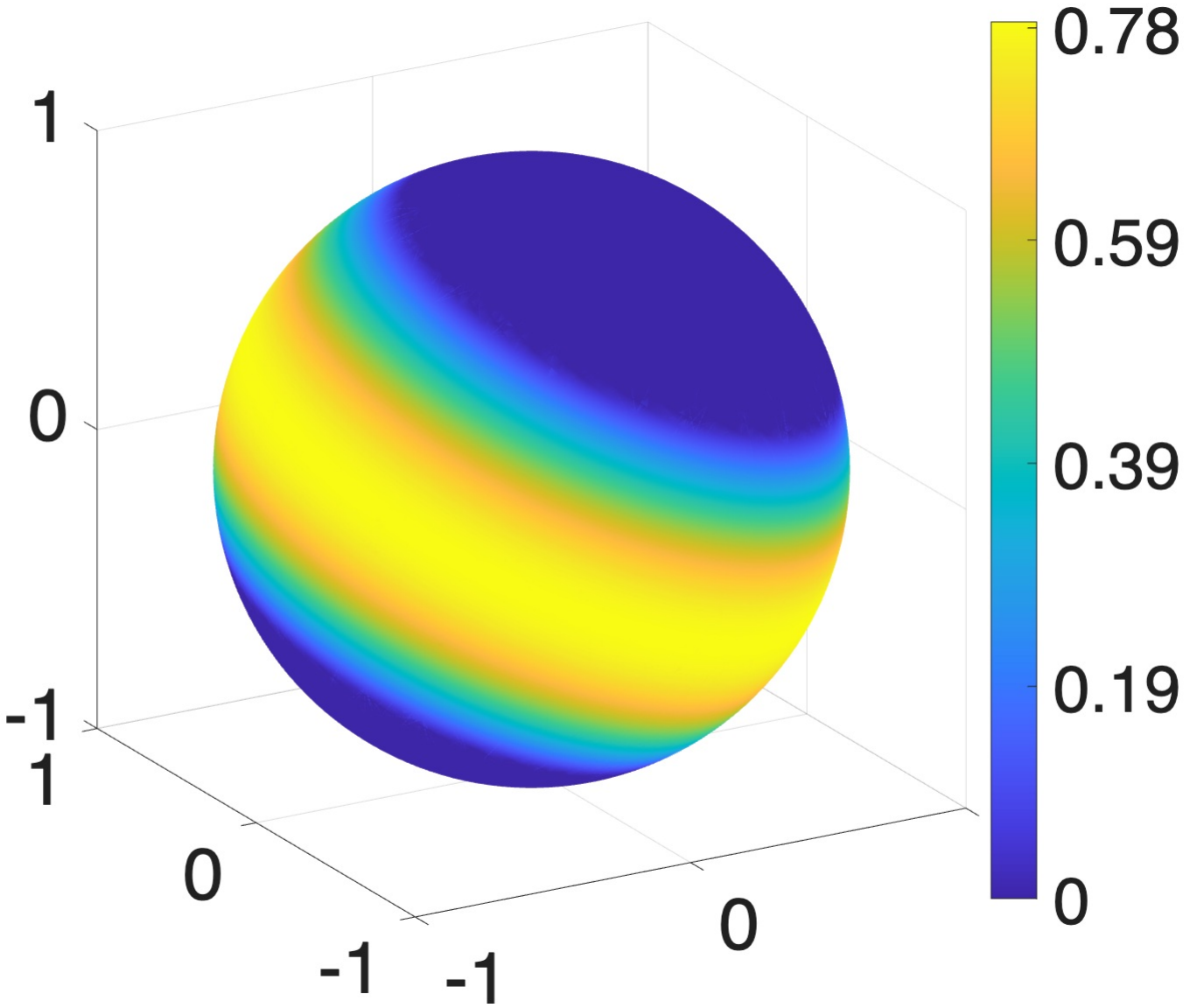}
        \caption{}
        \label{True Rf}
    \end{subfigure}
    \begin{subfigure}{0.32\textwidth}
        \centering
        \includegraphics[width=\linewidth]{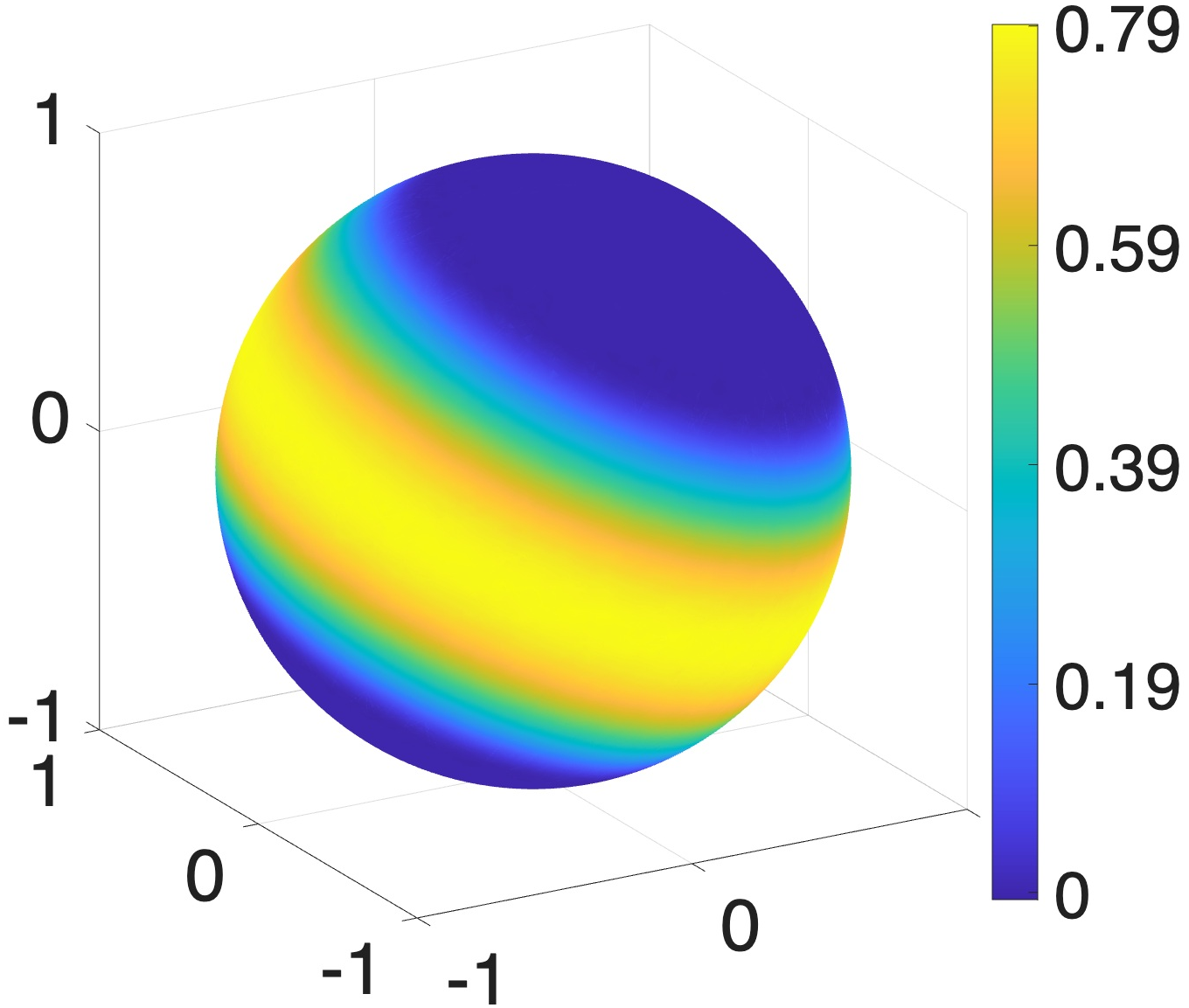}
        \caption{}
        \label{clean Rf}
    \end{subfigure}
    \begin{subfigure}{0.32\textwidth}
        \centering
        \includegraphics[width=\linewidth]{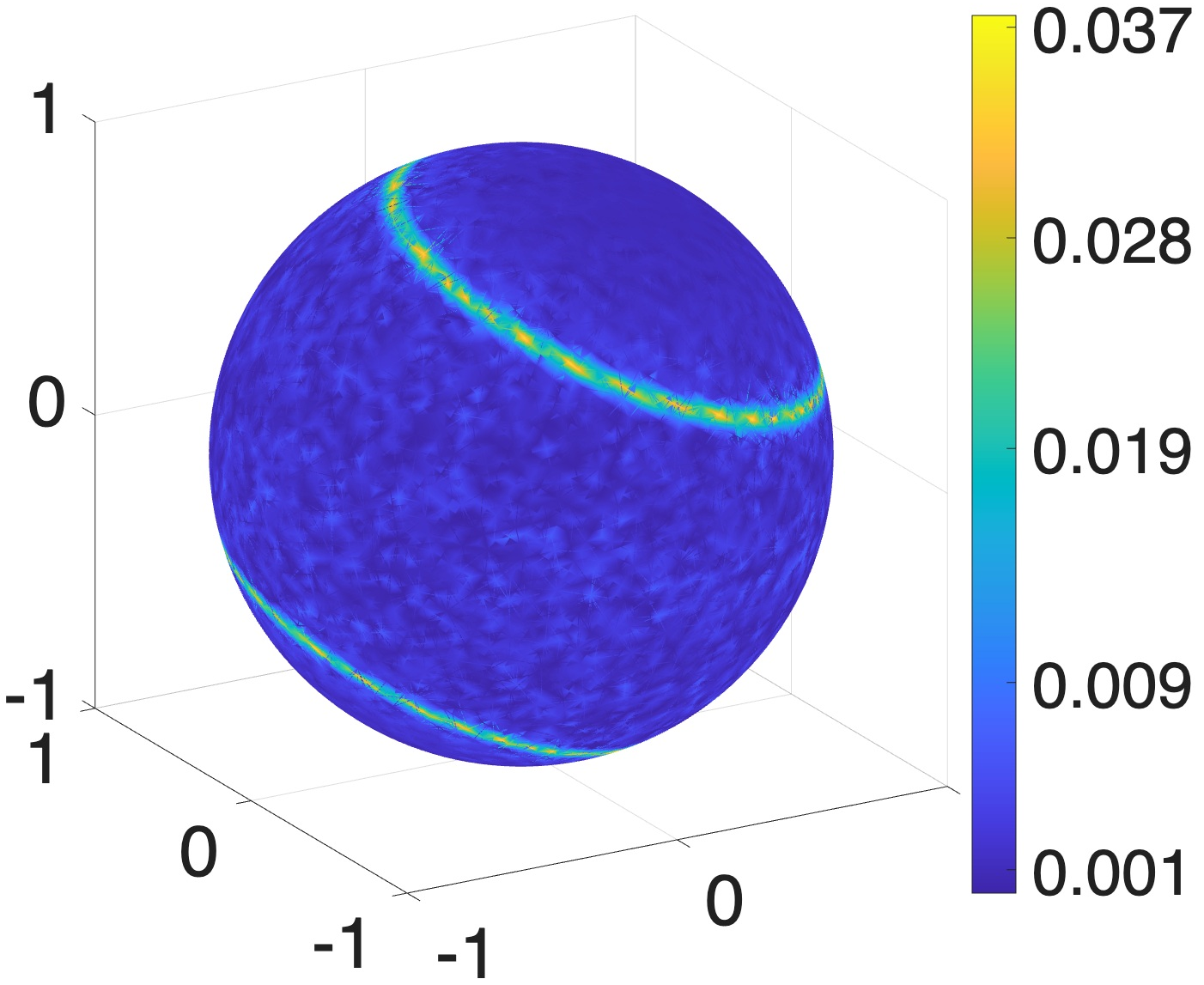}
        \caption{}
        \label{abs error}
    \end{subfigure}

    \caption{(a) Exact Radon data for \(f=\mathcal{X}_{B(c,t)}\) where \(c=(-0.2,0.2,0.3)\) and \(t=0.5\). (b) Radon data recovered via \eqref{eq:R=F-1} from noiseless cylindrical Radon transform data. (c) Absolute difference between exact Radon data and recovered Radon data for \(p\approx(0.06,0,1)\),\,\,\#\(\omega=9000\).}
    \label{fig:radon data}
\end{figure}

Figure \ref{fig:radon data} shows the plots of the exact Radon data, its reconstruction from cylindrical Radon transform data via \eqref{eq:R=F-1}, and their absolute difference when \(p\approx(0.06,0,1)\). We see that the error accumulates around the singularities of the ball phantom. The maximum relative \(L^2\)-error (computed over \(\omega\) for each fixed \(p\)) across all \(p\) is approximately \(0.0407\). Before executing the Radon transform inversion step, we observe that the data 
\(Rf(\omega,\omega\cdot p)\) is not uniformly spaced in the second variable. Thus, we resample the Radon data to obtain a uniformly spaced linear variable \(s\in[-a,a] \) before implementing the filtered backprojection algorithm. The resampling is done by projecting samples onto a uniform \(s\)-grid using binning with local averaging, followed by interpolation. This resulted in a relative \(L^2\)-error of 0.0241, which is computed numerically via the formula 
\[ L_{rel}^2(Rf_{resampled},Rf_{exact})= \sqrt{\frac{\int_\mathbb{R}\int_{\mathbb{S}^2}|Rf_{resampled}-Rf_{exact}|^2\, d\omega\, ds}{\int_\mathbb{R}\int_{\mathbb{S}^2}|Rf_{exact}|^2\, d\omega\, ds}}.\] 

Figure \ref{fig:resampling} visualizes the exact, recovered, and resampled Radon data for a fixed direction \(\omega\). We see that the error is larger around the edges of the support of the Radon data which again corresponds to the singularity of the ball phantom. 
\begin{figure}[htbp]
\centering\includegraphics[width=.6\textwidth]{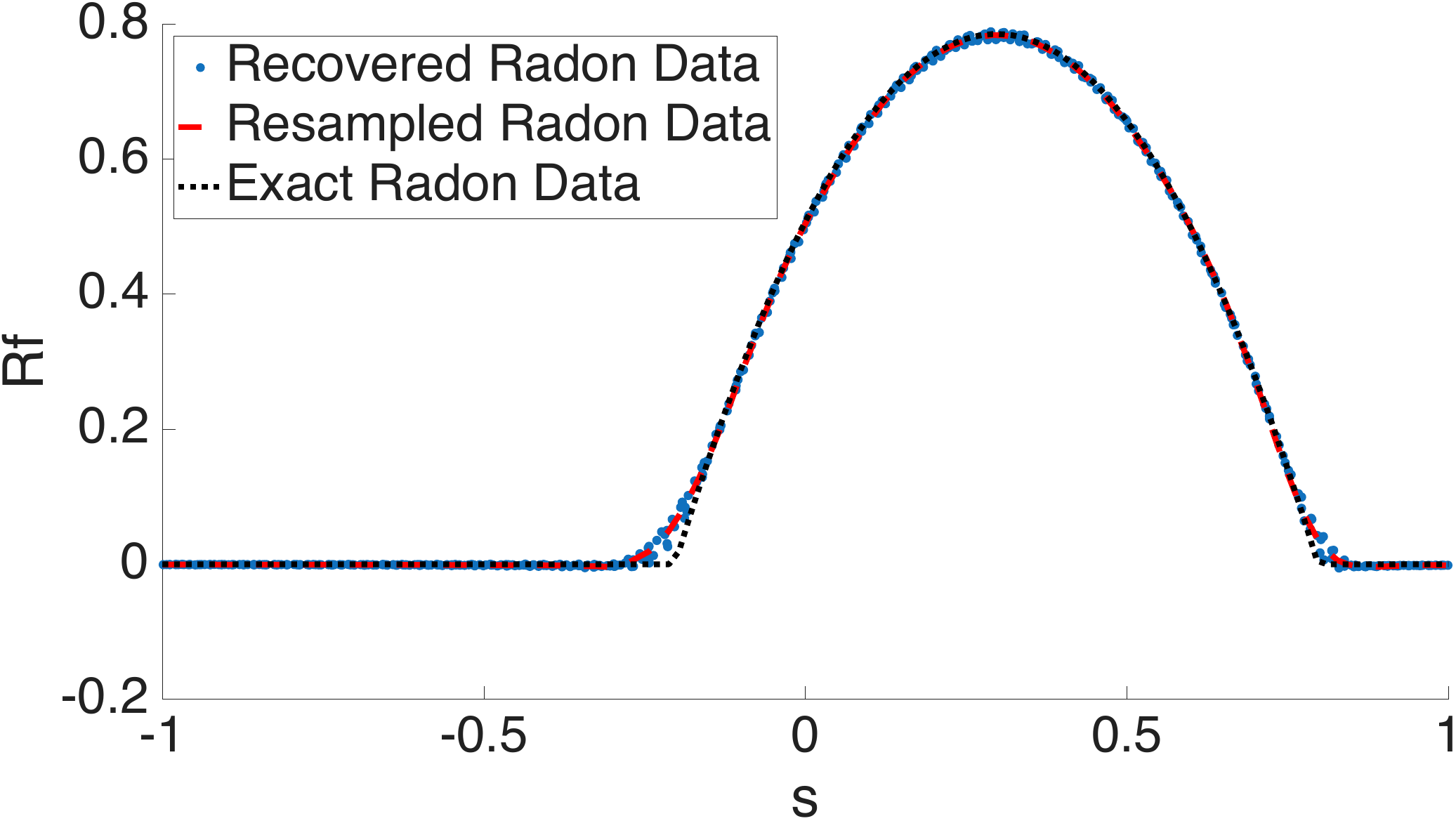} 
\caption{\centering Recovered Radon data and its resampling when \(\omega\approx(0.015,	0,1).\)} 
\label{fig:resampling}
\end{figure}

Finally, to the resampled Radon data, we apply the inversion formula for the Radon transform in three dimensions \eqref{ndimradoninversion}: \begin{equation*}
    f(x)=-\frac{1}{8\pi^2}\int_{\mathbb{S}^2} \frac{\partial^2}{\partial s^2} Rf(\omega,s)\Big|_{s=x\cdot \omega}\, d\omega,
\end{equation*} 
using the filtered backprojection algorithm \cite{natterer_mathematical_2001}. In the numerical implementation, the resampled Radon data were evaluated on a uniform \(s\)-grid on \([-a,a]\). The second derivative in \(s\) was approximated through the standard ramp-filtering step in the filtered backprojection algorithm, followed by interpolation at \(s=x\cdot\omega\) and numerical integration over the sampled directions \(\omega\in\mathbb{S}^2\).
\subsection{Reconstruction from Noiseless Data}
We now compare the reconstructions with the exact ball phantom. Figure \ref{fig:clean surface plots} shows the surface plots of two dimensional cross sections through the center of the phantom. 
\begin{figure}[ht]
\centering
    \begin{subfigure}{0.32\textwidth}
        \centering
        \includegraphics[width=\linewidth]{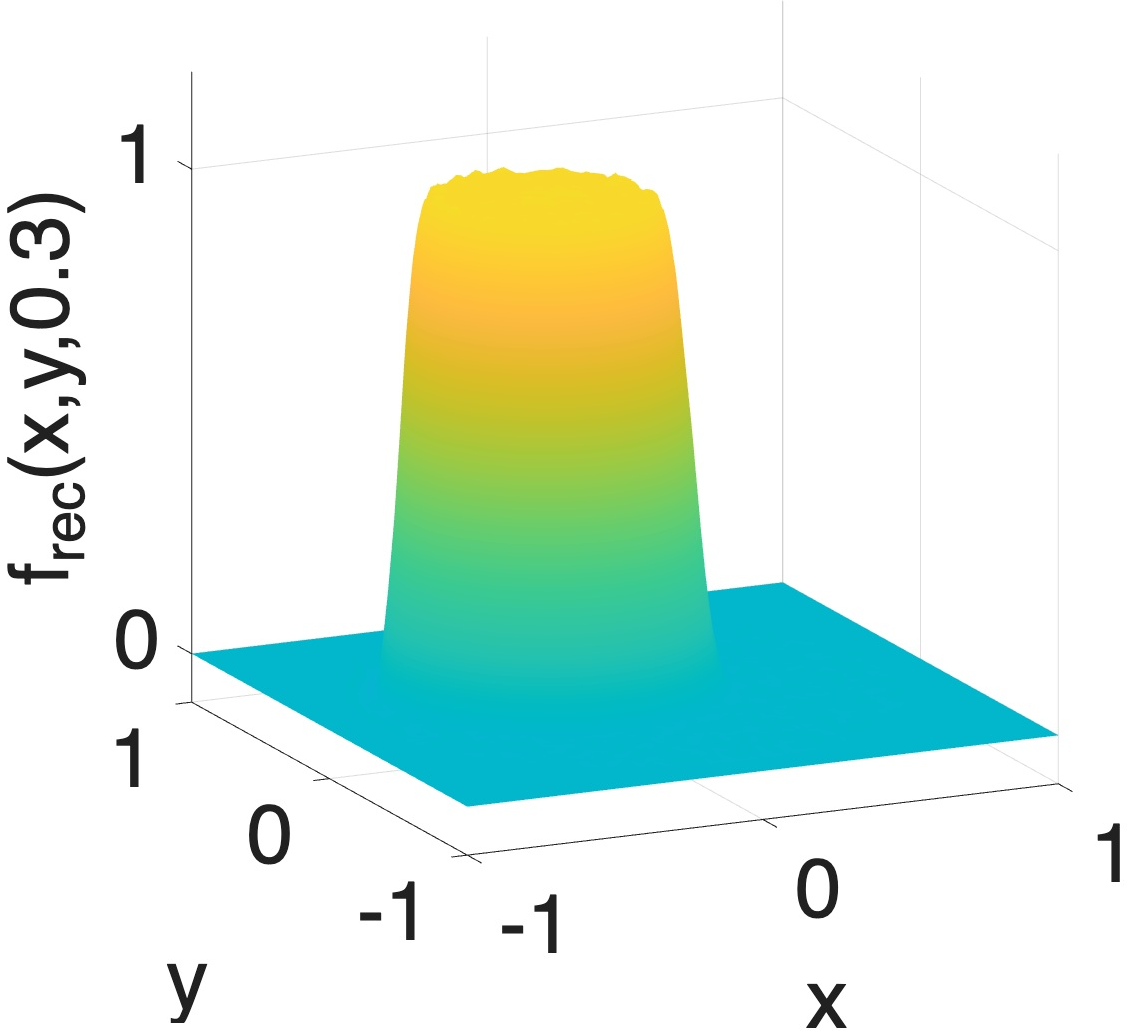}
        \caption{}
    \end{subfigure}
    \begin{subfigure}{0.32\textwidth}
        \centering
        \includegraphics[width=\linewidth]{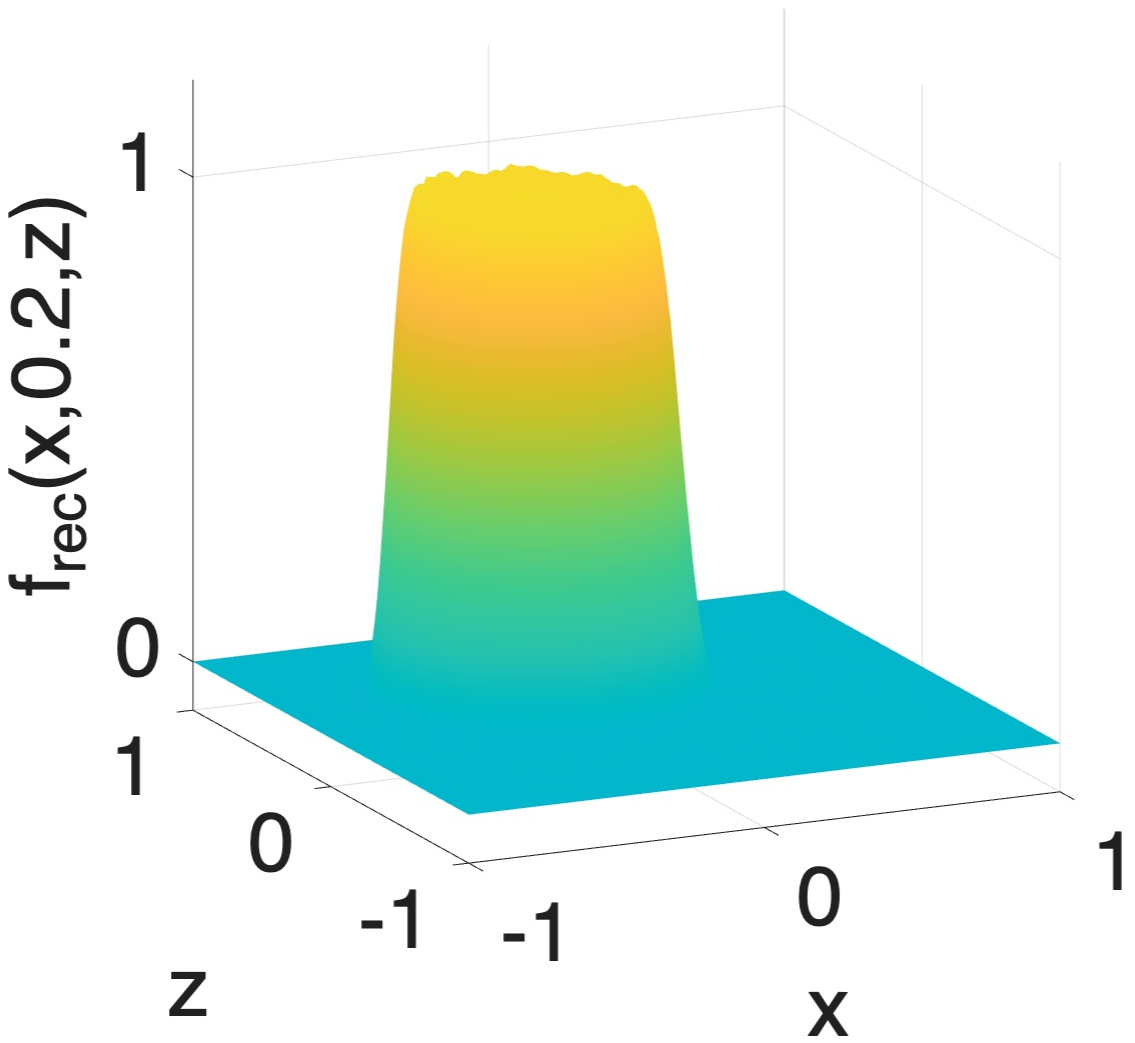}
        \caption{}
    \end{subfigure}
    \begin{subfigure}{0.32\textwidth}
        \centering
        \includegraphics[width=\linewidth]{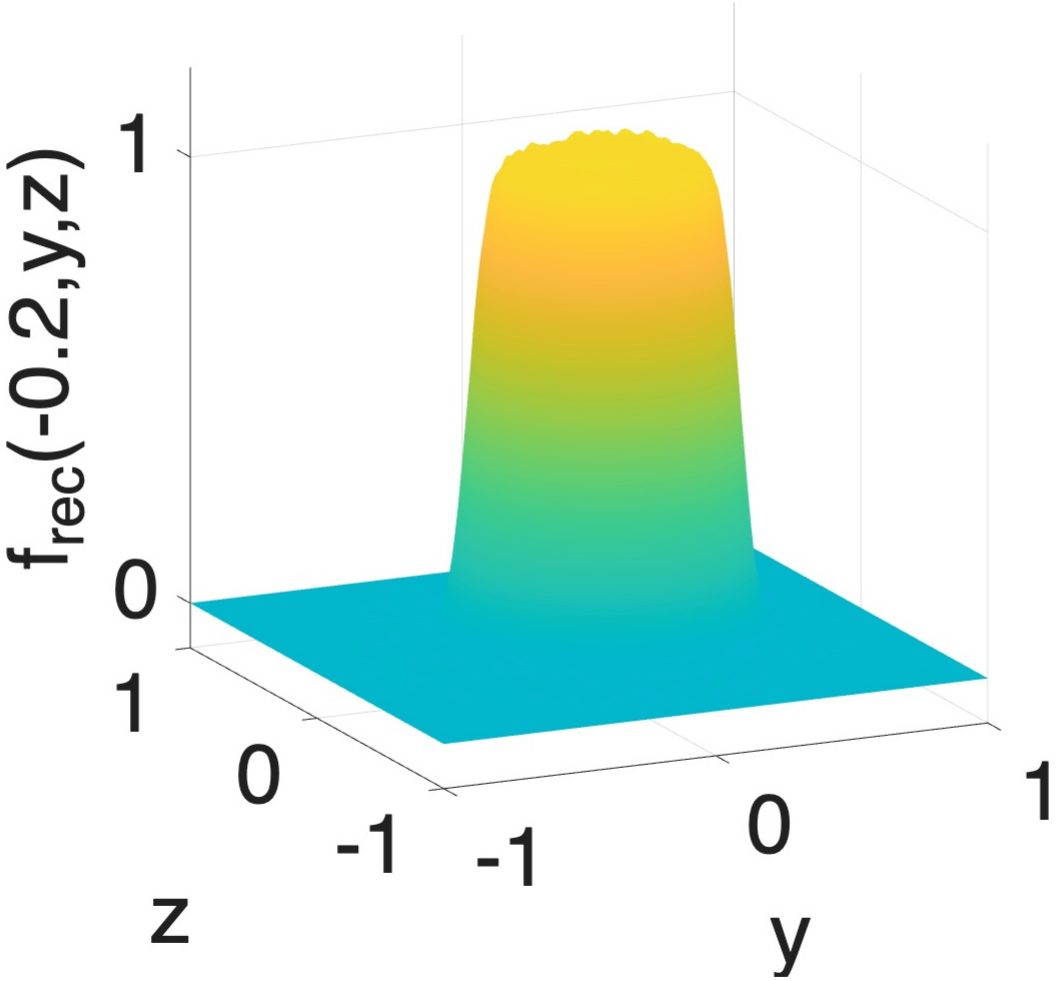}
        \caption{}
    \end{subfigure}

    \caption{The surface plots of the reconstruction from noiseless cylindrical Radon transform data using cross sections through the center of the phantom. (a) \(f_{rec}(x,y,0.3)\), (b) \(f_{rec}(x,0.2,z)\), (c) \(f_{rec}(-0.2,y,z)\).}
    \label{fig:clean surface plots}
\end{figure}
The results show that the inversion algorithm is effective in reconstructing the overall geometry and location of the phantom, although smoothing effects are visible near the boundary of the slices.

\begin{figure}[ht]
    \centering
    \begin{subfigure}{0.3\textwidth}
        \centering
        \includegraphics[width=\linewidth]{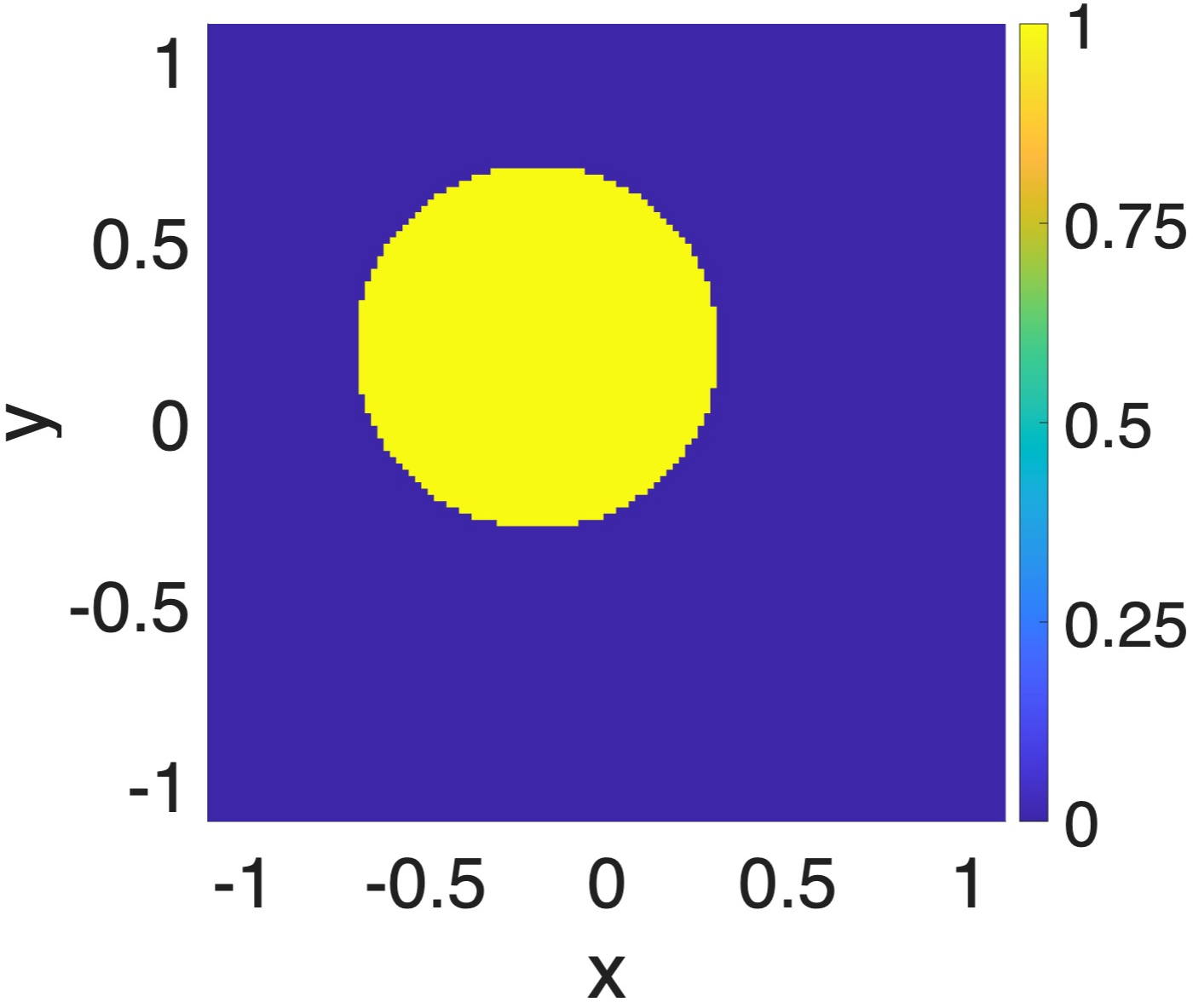}
        \caption{}
    \end{subfigure}
    \begin{subfigure}{0.3\textwidth}
        \centering
        \includegraphics[width=\linewidth]{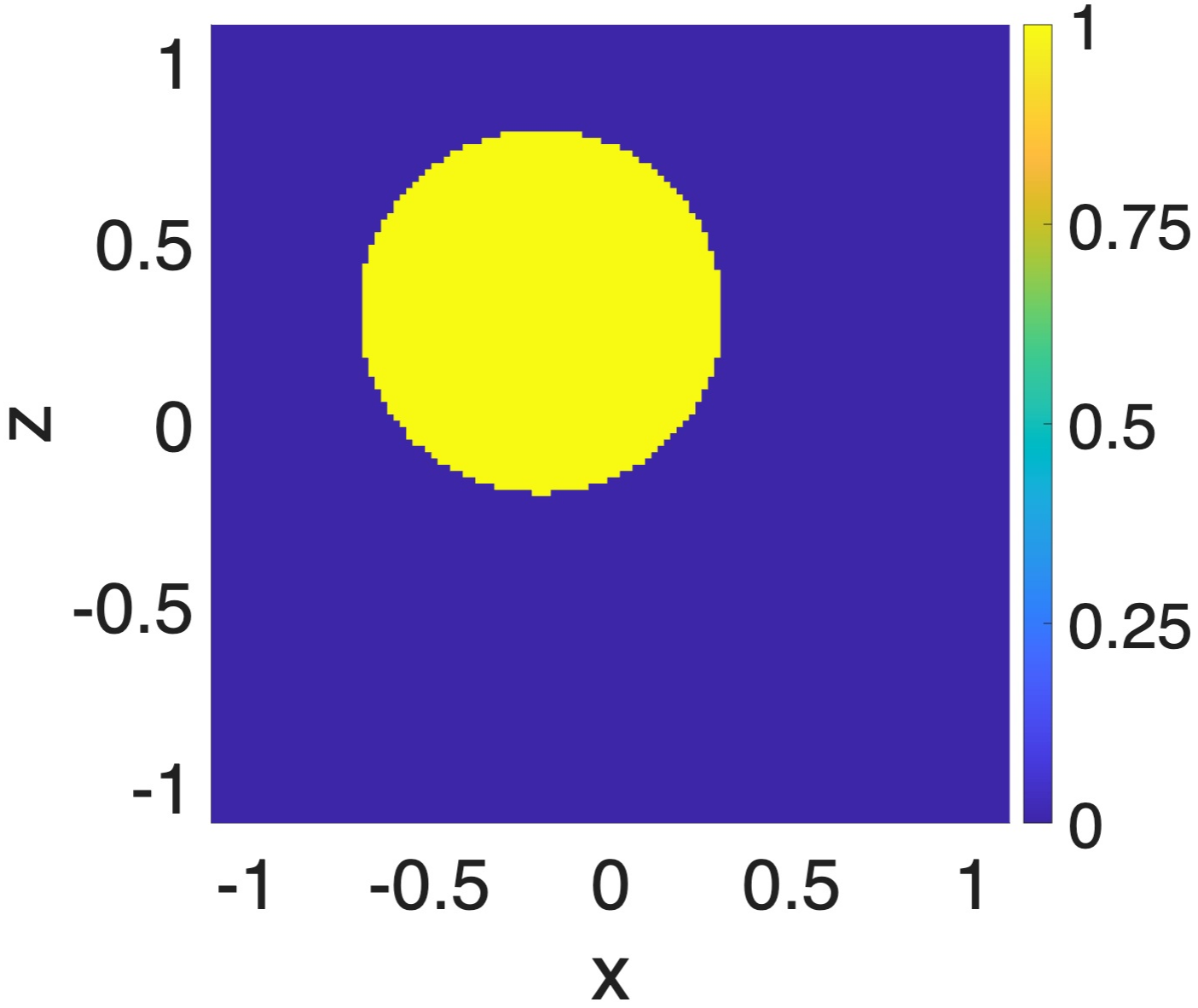}
        \caption{}
    \end{subfigure}
    \begin{subfigure}{0.3\textwidth}
        \centering
        \includegraphics[width=\linewidth]{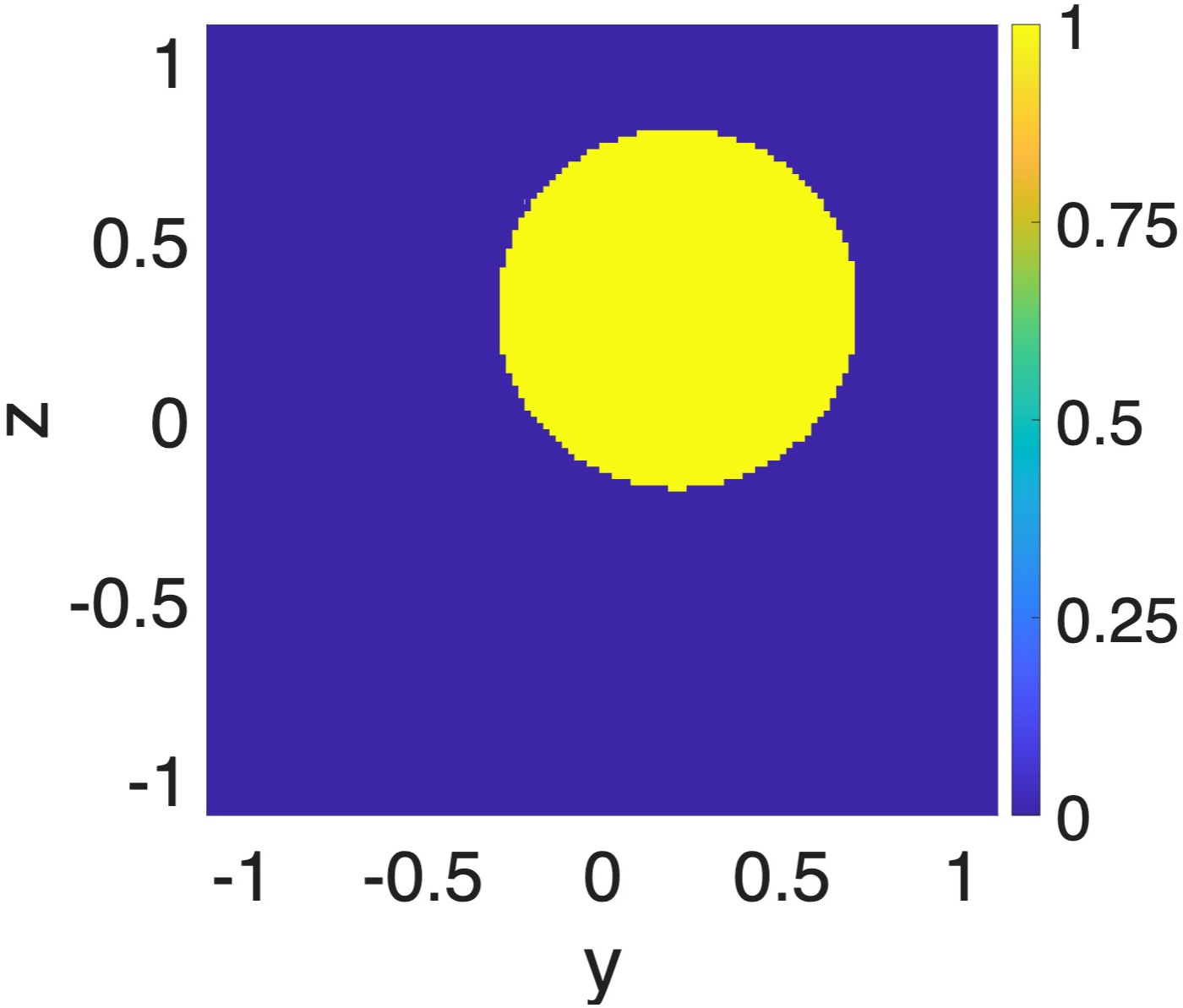}
        \caption{}
    \end{subfigure}
    \begin{subfigure}{0.3\textwidth}
        \centering
        \includegraphics[width=\linewidth]{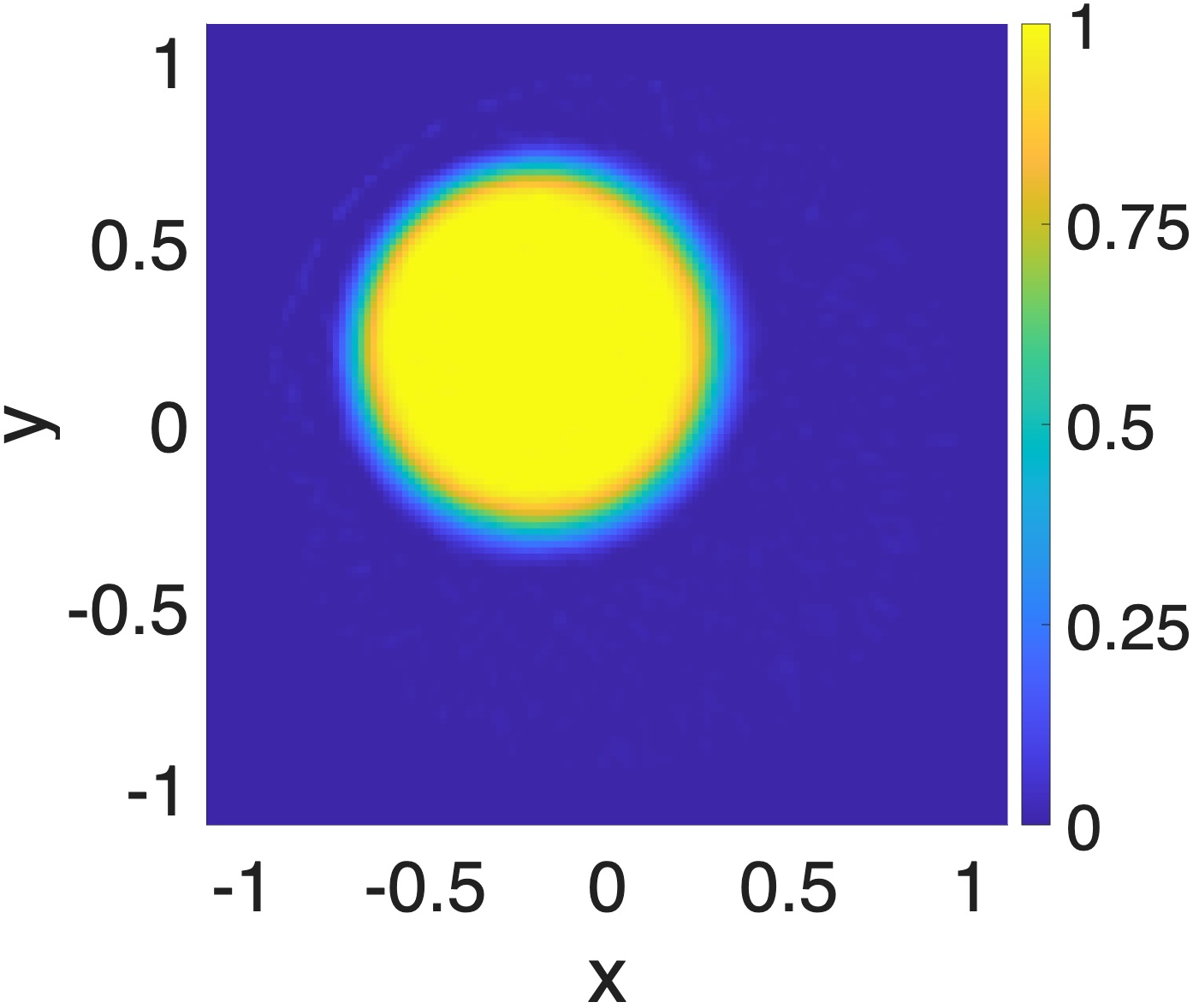}
        \caption{}
        \label{xy-phantom}
    \end{subfigure}
    \begin{subfigure}{0.3\textwidth}
        \centering
        \includegraphics[width=\linewidth]{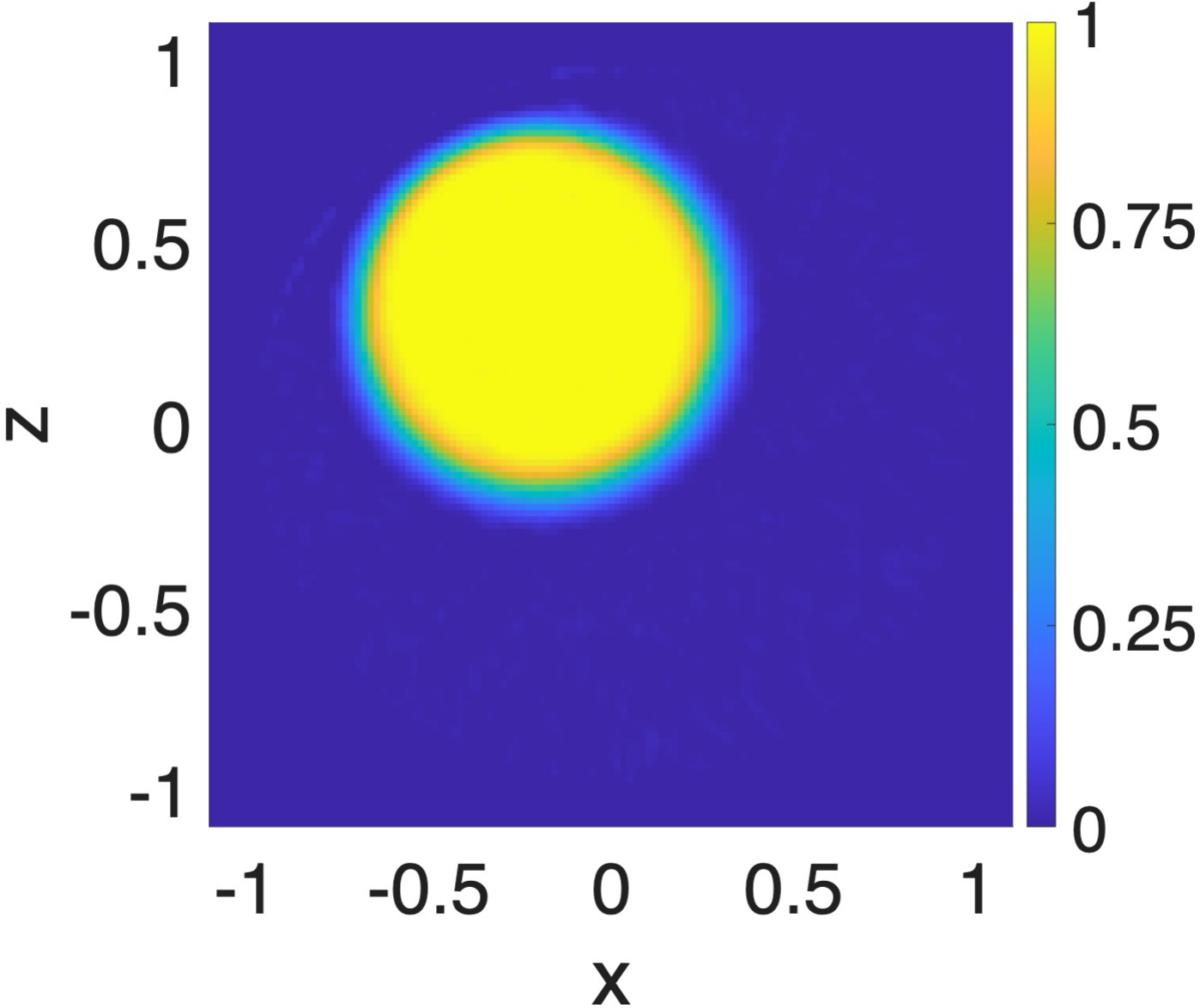}
        \caption{}
        \label{xy-phantom}
    \end{subfigure}
    \begin{subfigure}{0.3\textwidth}
        \centering
        \includegraphics[width=\linewidth]{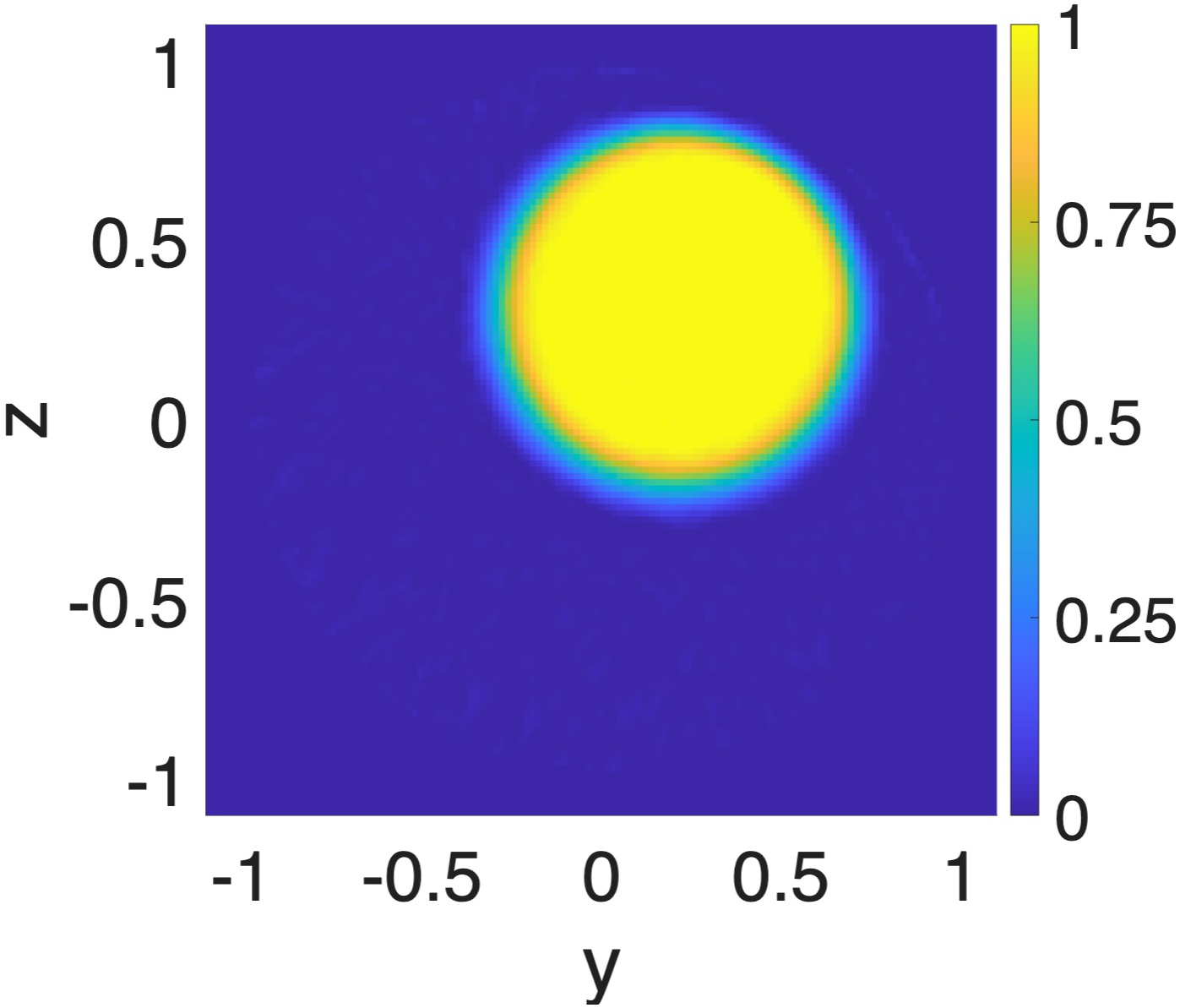}
        \caption{}
        \label{xy-phantom}
    \end{subfigure}
    \begin{subfigure}{0.3\textwidth}
        \centering
        \includegraphics[width=\linewidth]{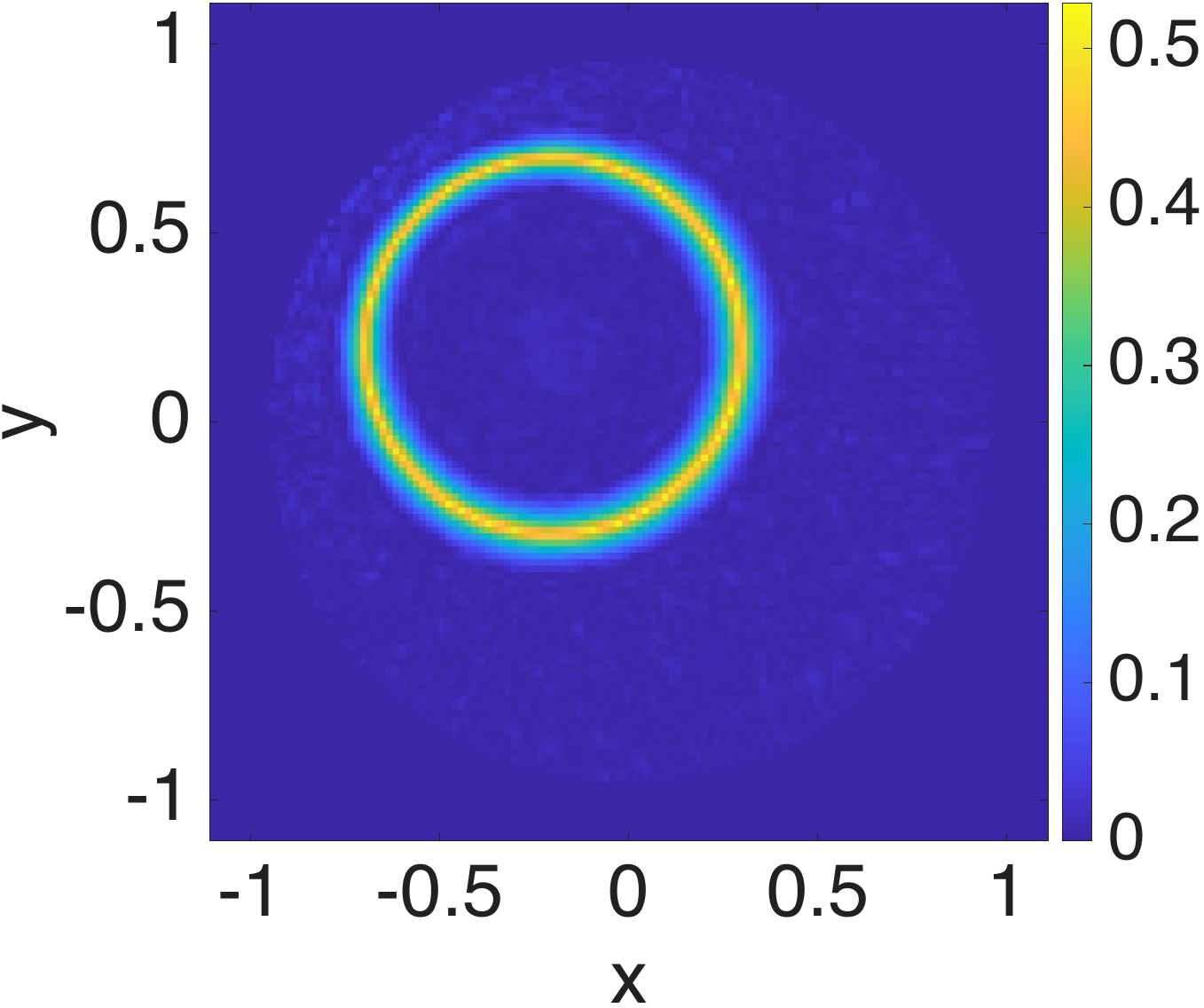}
        \caption{}
        \label{xy-phantom}
    \end{subfigure}
    \begin{subfigure}{0.3\textwidth}
        \centering
        \includegraphics[width=\linewidth]{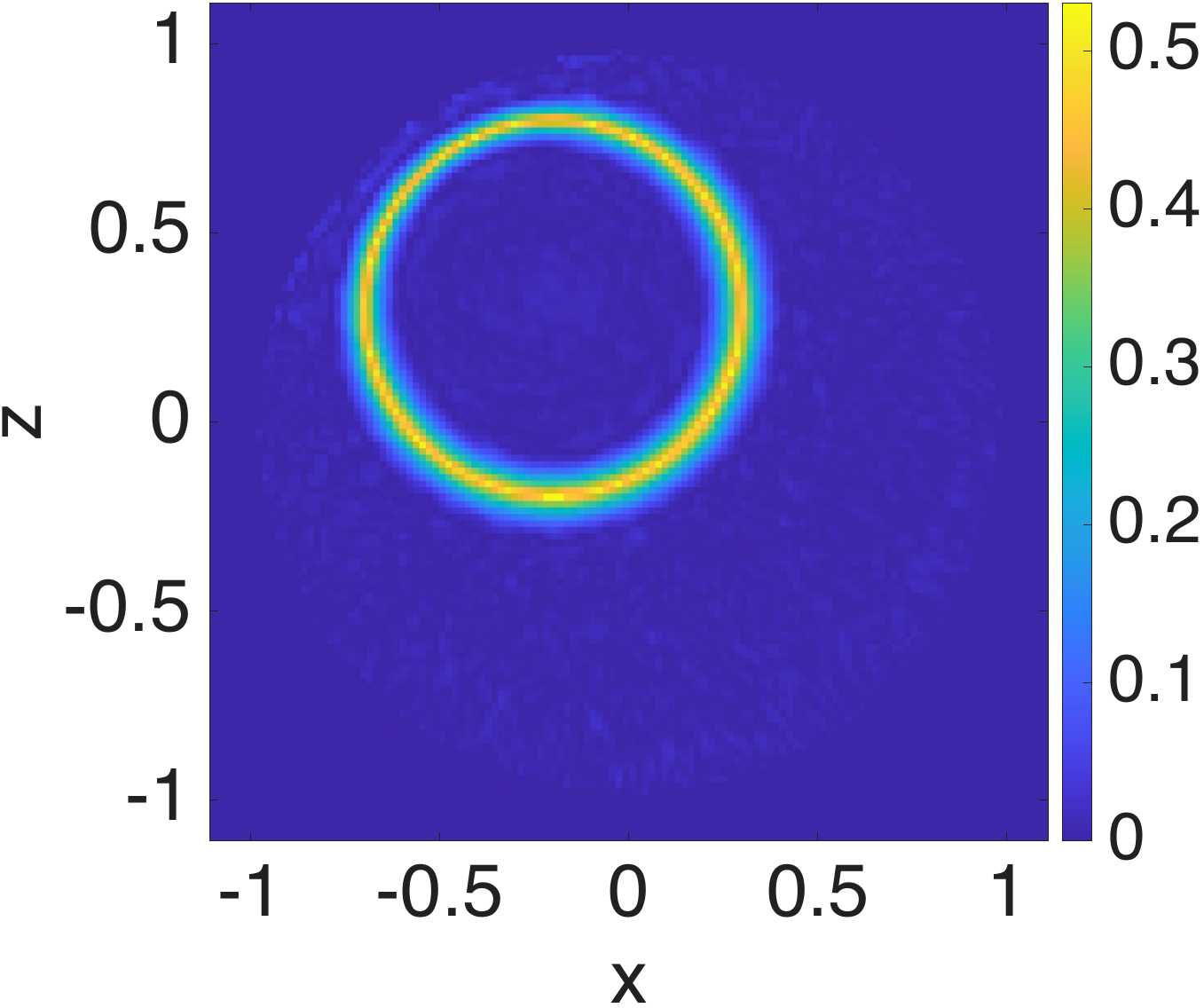}
        \caption{}
        \label{xy-phantom}
    \end{subfigure}
    \begin{subfigure}{0.3\textwidth}
        \centering
        \includegraphics[width=\linewidth]{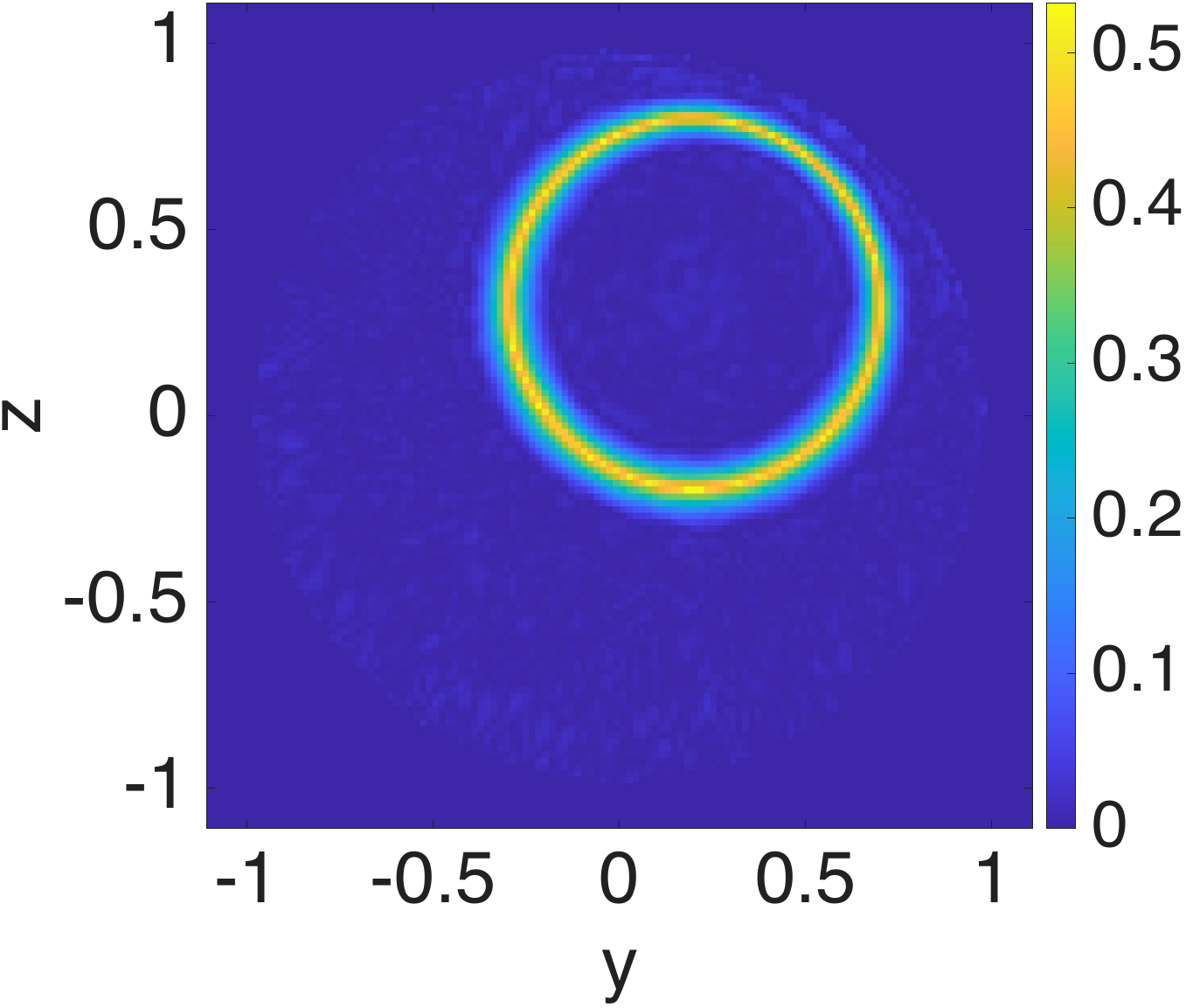}
        \caption{}
        \label{xy-phantom}
    \end{subfigure}
     \caption{
Top views of the cross-sectional slices through the center of the ball phantom. 
Columns correspond to the cross sections by the planes \(z=0.3\), \(y=0.2\), and \(x=-0.2\), respectively. 
The first row (a-c) shows the slices of the exact phantom. 
The second row (d-f) shows the slices of the reconstruction obtained from noiseless cylindrical Radon transform data, while the third row (g-i) shows the absolute error between the phantom and reconstruction.}
\label{fig: clean slices}
\end{figure}
Figure \ref{fig: clean slices} shows the top views of cross-sections of the phantom, reconstruction shown in Figure \ref{fig:clean surface plots}, and absolute difference between them. The absolute difference plots illustrate that the largest discrepancies occur near the jump discontinuity at the boundary of the ball phantom.
\begin{figure}[htbp]
    \centering
    \begin{subfigure}{0.32\textwidth}
        \centering
        \includegraphics[width=\linewidth]{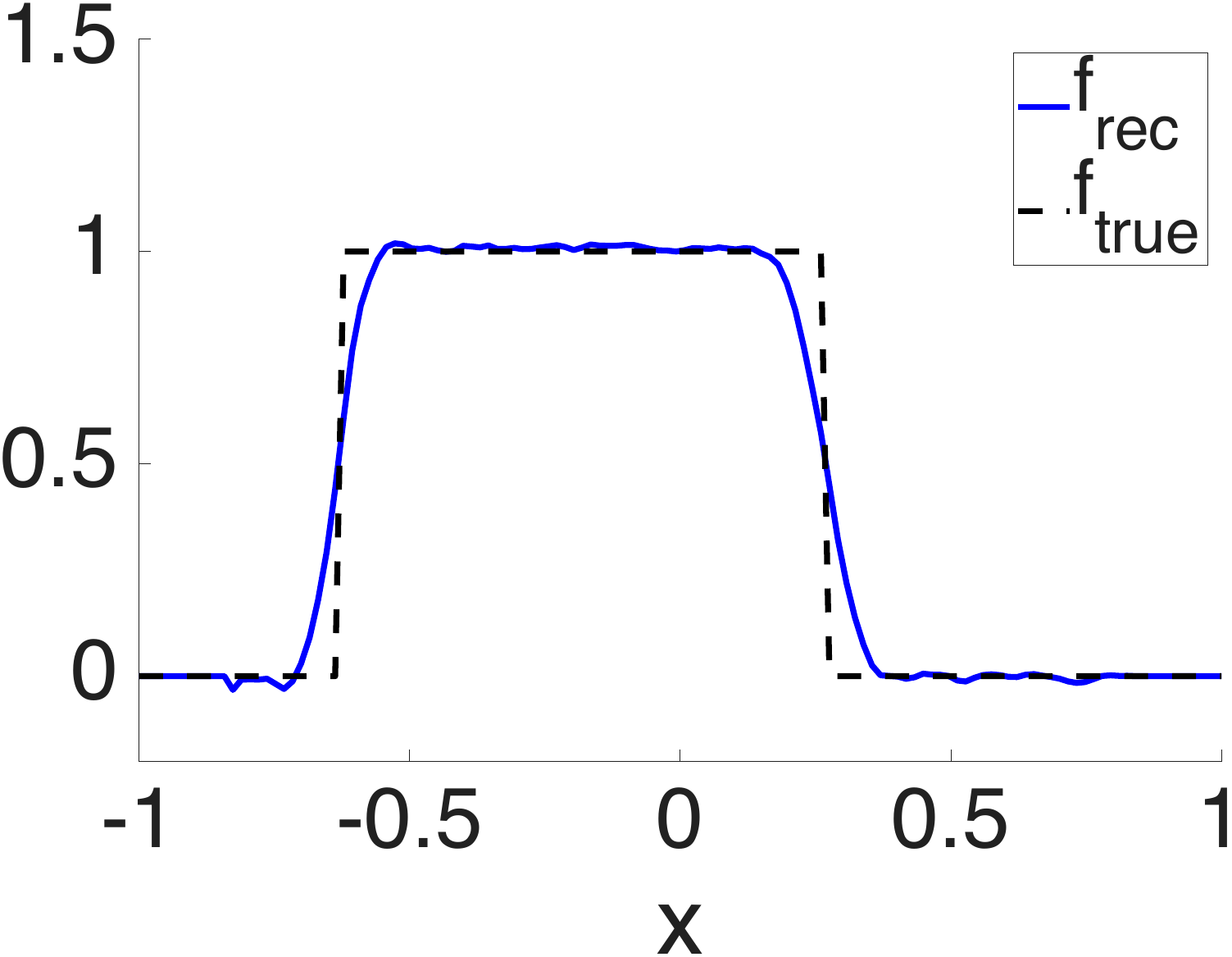}
        \caption{}
        \label{clean-x-profile}
    \end{subfigure}
    \begin{subfigure}{0.32\textwidth}
        \centering
        \includegraphics[width=\linewidth]{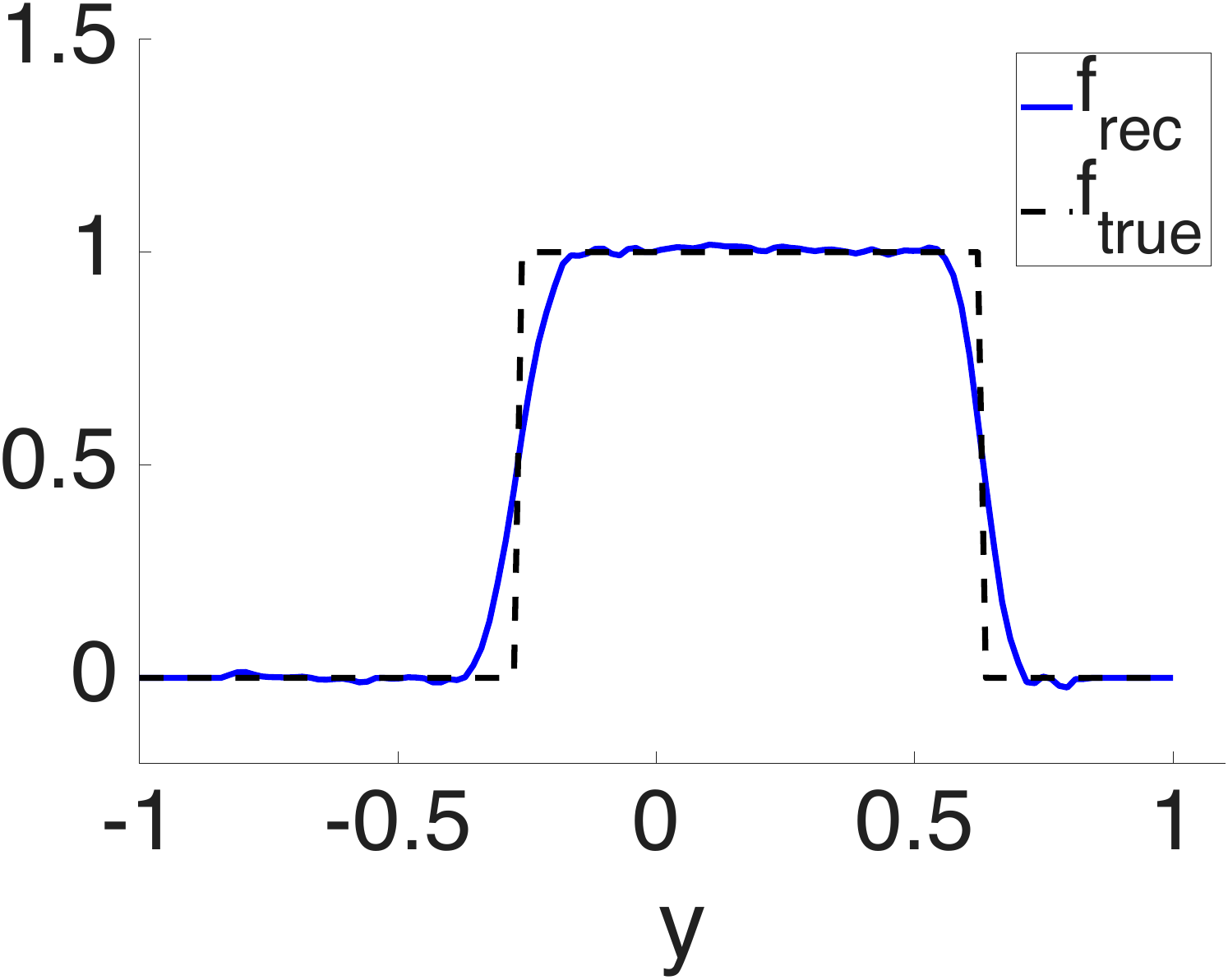}
        \caption{}
        \label{clean-y-profile}
    \end{subfigure}
    \begin{subfigure}{0.32\textwidth}
        \centering
        \includegraphics[width=\linewidth]{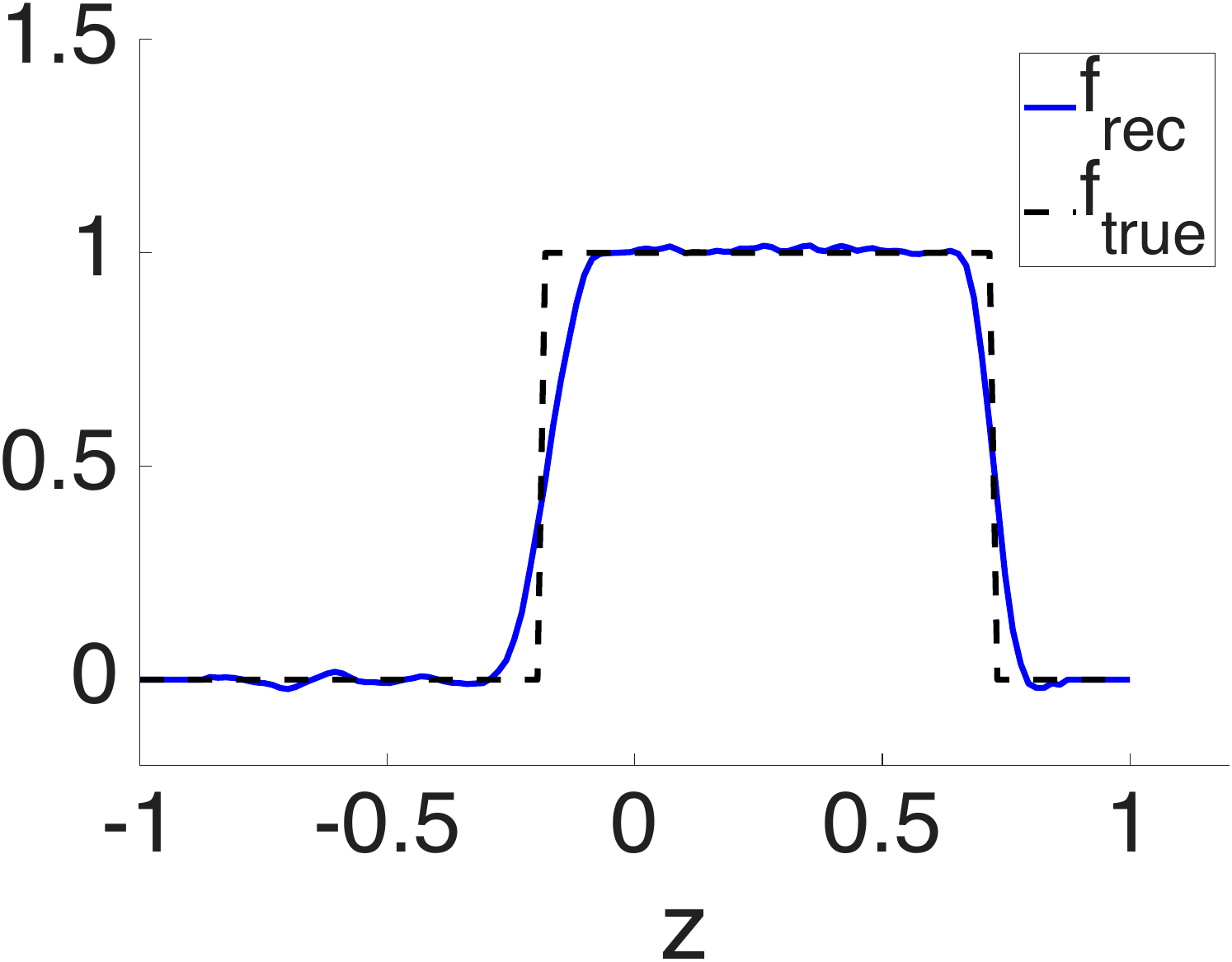}
        \caption{}
        \label{clean-z-profile}
    \end{subfigure}
    \caption{One-dimensional line profiles through the center of the phantom comparing the exact phantom \(f_{\mathrm{true}}\) and the reconstruction obtained from noiseless cylindrical Radon transform data. Comparison of
(a) \(x\)-profiles at \((y,z)=(0.2,0.3)\), 
(b) \(y\)-profiles at \((x,z)=(-0.2,0.3)\), 
(c) \(z\)-profiles at \((x,y)=(-0.2,0.2)\).}
    \label{fig:clean profile plots}
 \end{figure}

The one-dimensional profile plots can be seen in Figure \ref{fig:clean profile plots} which are obtained by slicing the images in Figure \ref{fig: clean slices} through the center of the phantom. 

For the reconstruction from the noiseless data, the relative maximum error was \(0.5491\)\footnote{The relative maximum error was computed as the ratio of the maximum pointwise error between the exact and numerically computed values to the maximum absolute value of the exact data.}. The relative \(L^p\)-error is found by numerically approximating 
\begin{equation}\label{L^2 error R^3}
L_{rel}^p(f_{rec},f_{true})=\Bigg(\frac{\int_{\mathbb{R}^3} |f_{rec}(x,y,z)-f_{true}(x,y,z)|^p\, dV}{\int_{\mathbb{R}^3} |f_{true}(x,y,z)|^p\, dV}\Bigg)^{1/p}.\end{equation} The relative \(L^2\)- and \(L^1\)-errors were measured to be \(0.2635\) and \(0.2664\), respectively. Lastly, the center of mass error was \(0.0017\). The small center of mass error indicates that the reconstruction accurately captures the overall shape. In contrast, the large relative maximum error reflects the inaccuracies near the singularity of the ball phantom.

\subsection{Reconstruction from Noisy Data}
Having quantified the discretization error by applying the inversion algorithm to the noiseless forward data, we now apply the reconstruction procedure to noisy cylindrical Radon transform data to investigate the stability of the inversion algorithm. Additive white Gaussian noise with SNR \(20\) dB was added to the full cylindrical Radon transform data array prior to inversion. In the noisy setting, the weighted integral in the radial variable \eqref{eq:G} becomes highly sensitive to perturbations for small values of r because of the singular factor \(1/r\). Although the reconstruction from noiseless cylindrical Radon transform data remained stable when sampling radii near \(r=0\), the addition of noise caused substantial amplification. To reduce this instability, we regularized the singular weight near \(r=0\) by replacing \(2/r\) with \(2/(r+\epsilon)\) on a small interval of radii. Specifically, we used
\[
G_\epsilon(v,p)
=
\int_{r_{\min}}^{0.05} Cf(v,p,r)\frac{2}{r+\epsilon}\,dr
+
\int_{0.05}^{2} Cf(v,p,r)\frac{2}{r}\,dr,
\]
where \(r_{\min}=0.004\) is the smallest sampled radius. The parameter \(\epsilon\) was selected empirically by comparing \(G_\epsilon(v,p)\), computed from noisy data, with the corresponding noiseless quantity \(G(v,p)\). Among logarithmically spaced values \(\epsilon\in(10^{-4},10^{-1})\), the relative \(L^2\)-error was minimized near \(\epsilon=0.0055\), which was used in the noisy data reconstructions. This choice of \(\epsilon\) is used only as a benchmark in this controlled numerical experiment, since the noiseless quantity \(G(v,p)\) is not available in practical measured-data settings.

\begin{figure}[htbp]
\centering
\includegraphics[width=0.5\linewidth]{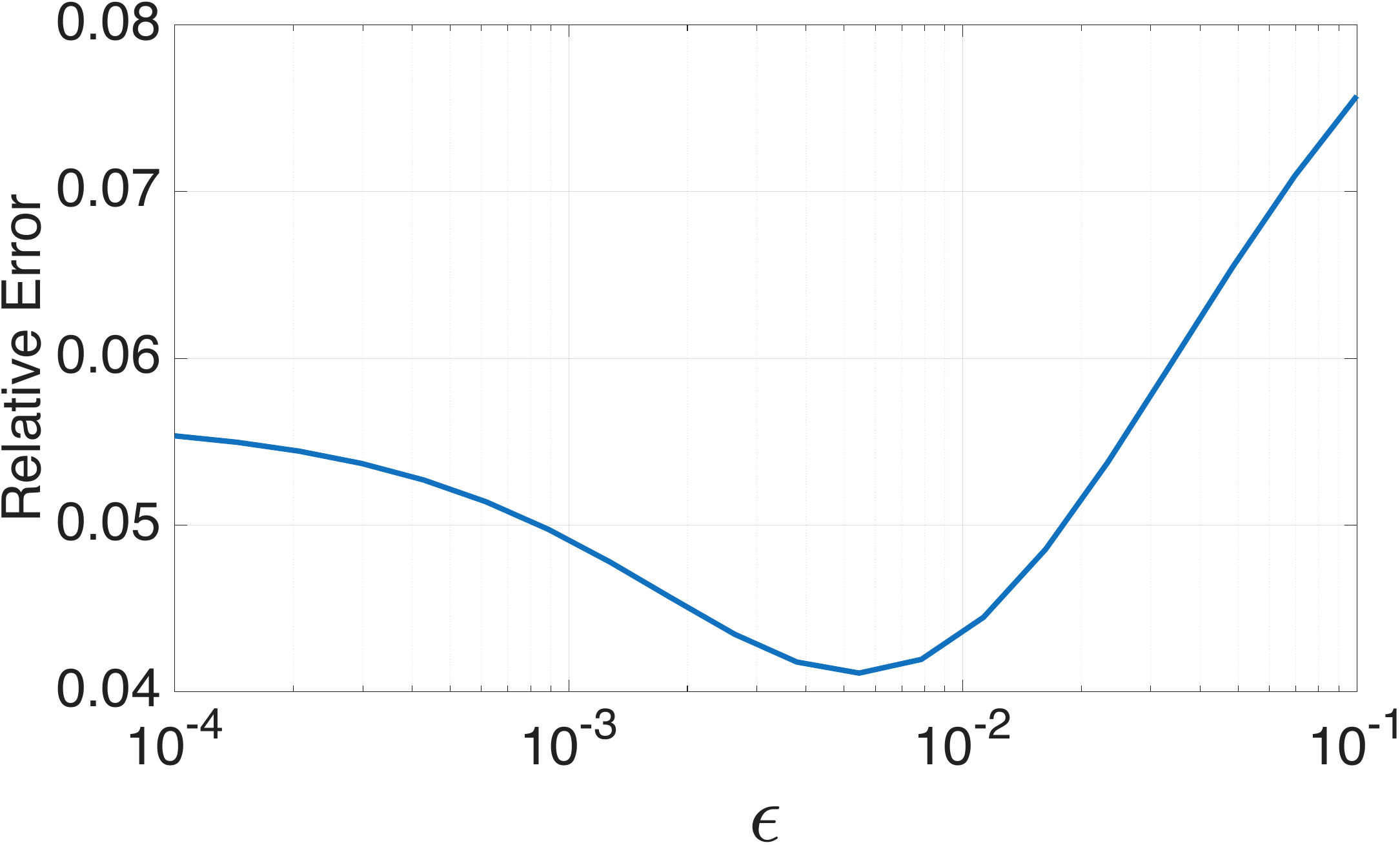}
    \caption{Relative \(L^2\)-error between \(G_\epsilon(v,p)\) and  \(G(v,p)\) as \(\epsilon\) varies.}
    \label{fig:reg param}
\end{figure}
Apart from the modified weighting near \(r=0\), all remaining inversion steps were identical to those used in the noiseless-data reconstruction. 

Figure \ref{fig:noisy surface plots} shows the surface plots where one coordinate is fixed at the center of the phantom. These plots look very similar to Figure \ref{fig:clean surface plots} suggesting that the reconstruction remains stable for this noise level.
\begin{figure}[htbp]
\centering
    \begin{subfigure}{0.32\textwidth}
        \centering
        \includegraphics[width=\linewidth]{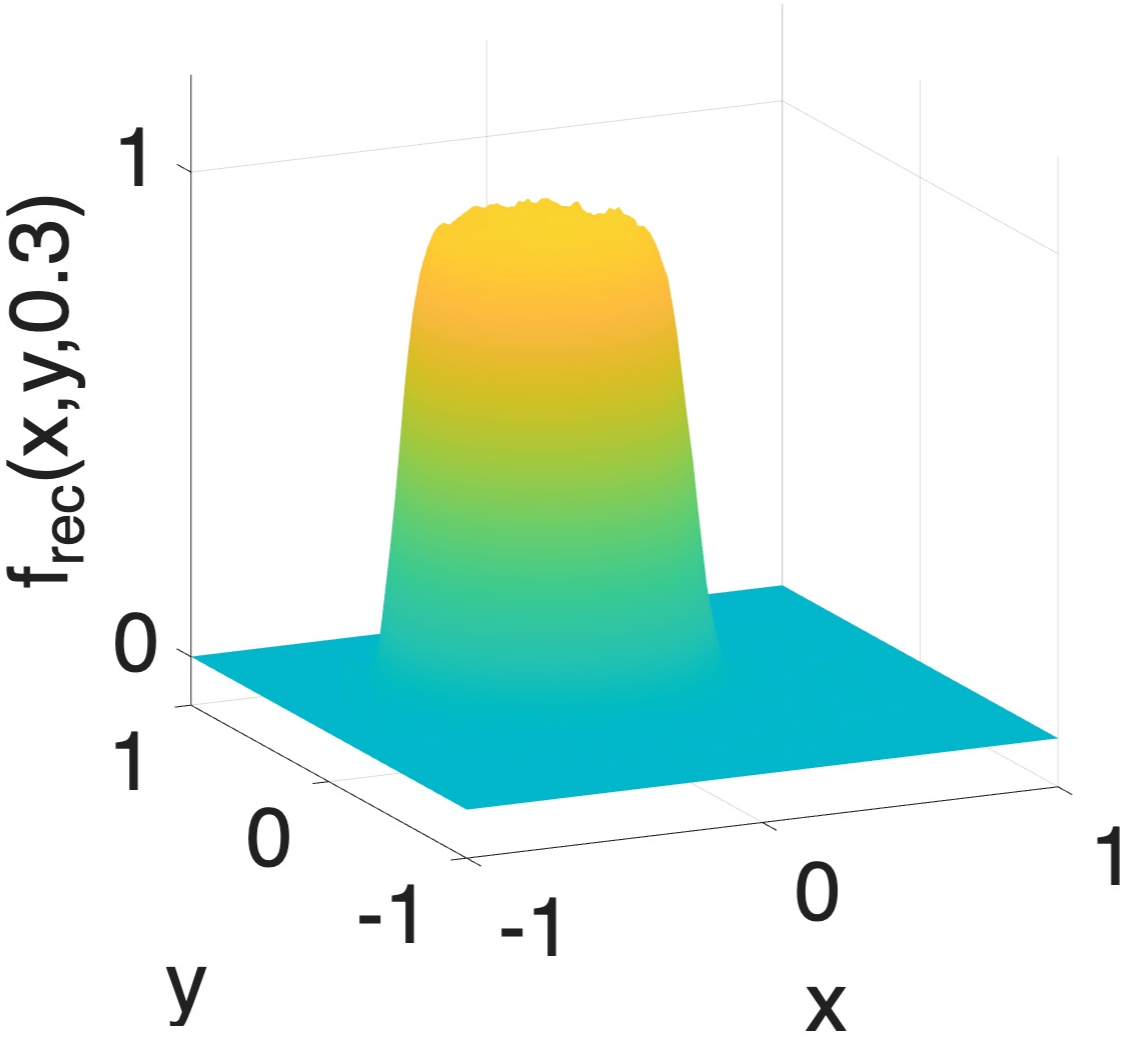}
        \caption{}
        \label{3D-yz-phantom1}
    \end{subfigure}
    \begin{subfigure}{0.32\textwidth}
        \centering
        \includegraphics[width=\linewidth]{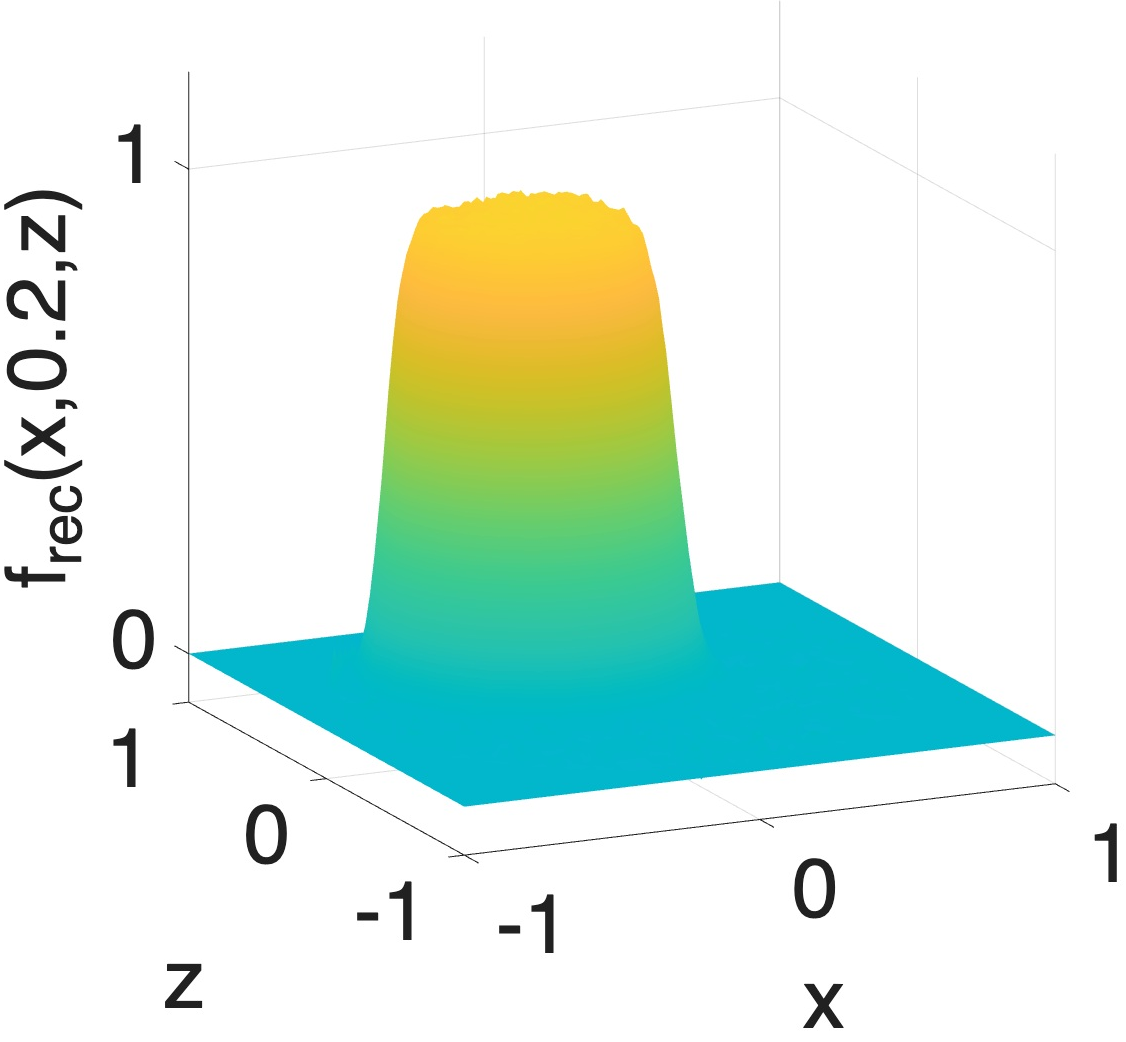}
        \caption{}
    \end{subfigure}
    \begin{subfigure}{0.32\textwidth}
        \centering
        \includegraphics[width=\linewidth]{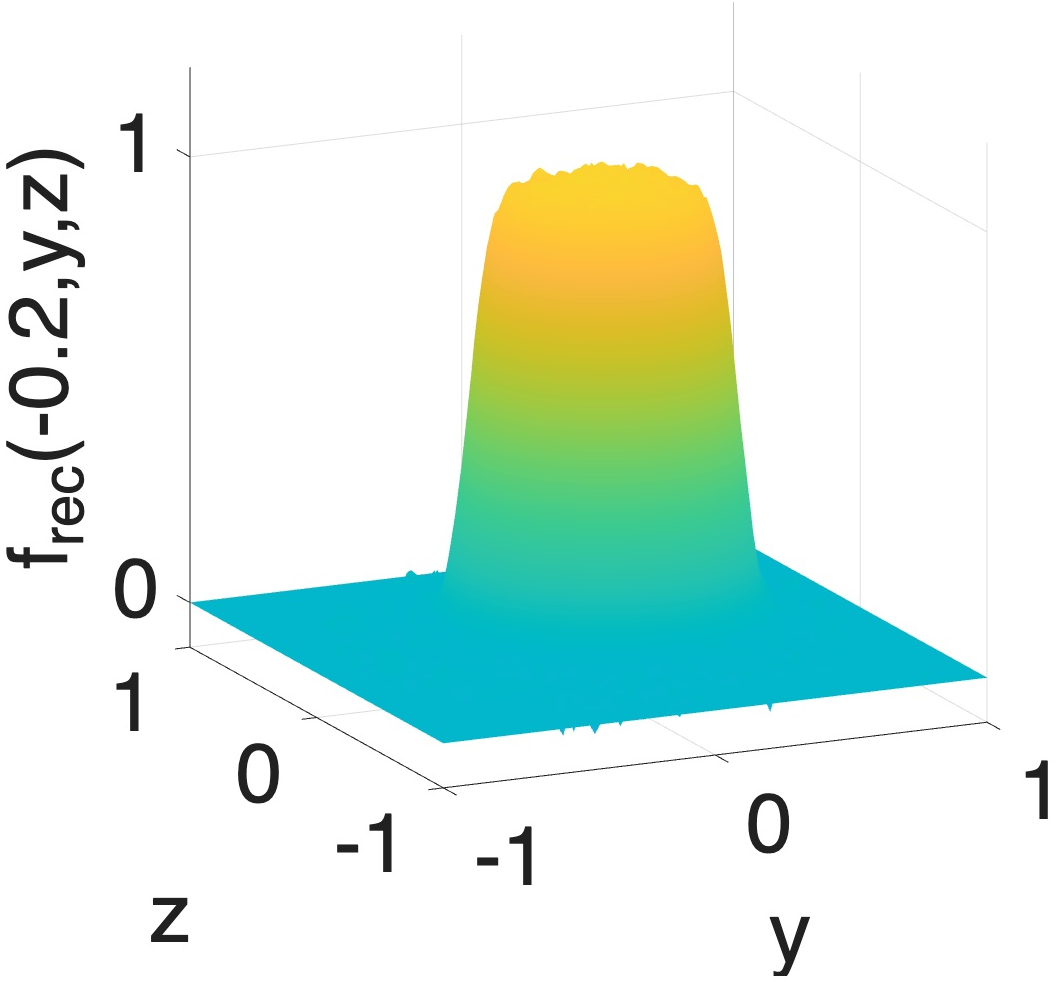}
        \caption{}
    \end{subfigure}

    \caption{The surface plots of the reconstruction from noisy cylindrical Radon transform data using cross sections through the center of the phantom. (a) \(f_{rec}(x,y,0.3)\), (b) \(f_{rec}(x,0.2,z)\), (c) \(f_{rec}(-0.2,y,z)\).}
    \label{fig:noisy surface plots}
\end{figure}

\begin{figure}[htbp]
    \begin{subfigure}{0.32\textwidth}
        \centering
        \includegraphics[width=\linewidth]{Images/Phantom_slices/phantom-xy.pdf}
        \caption{}
        \label{phantom-xy-slice}
    \end{subfigure}
    \begin{subfigure}{0.32\textwidth}
        \centering
        \includegraphics[width=\linewidth]{Images/Phantom_slices/phantom-xz.pdf}
        \caption{}
        \label{phantom-xz-slice}
    \end{subfigure}
    \begin{subfigure}{0.32\textwidth}
        \centering
        \includegraphics[width=\linewidth]{Images/Phantom_slices/phantom-yz.pdf}
        \caption{}
        \label{phantom-yz-slice}
    \end{subfigure}\\
    \begin{subfigure}{0.32\textwidth}
        \centering
        \includegraphics[width=\linewidth]{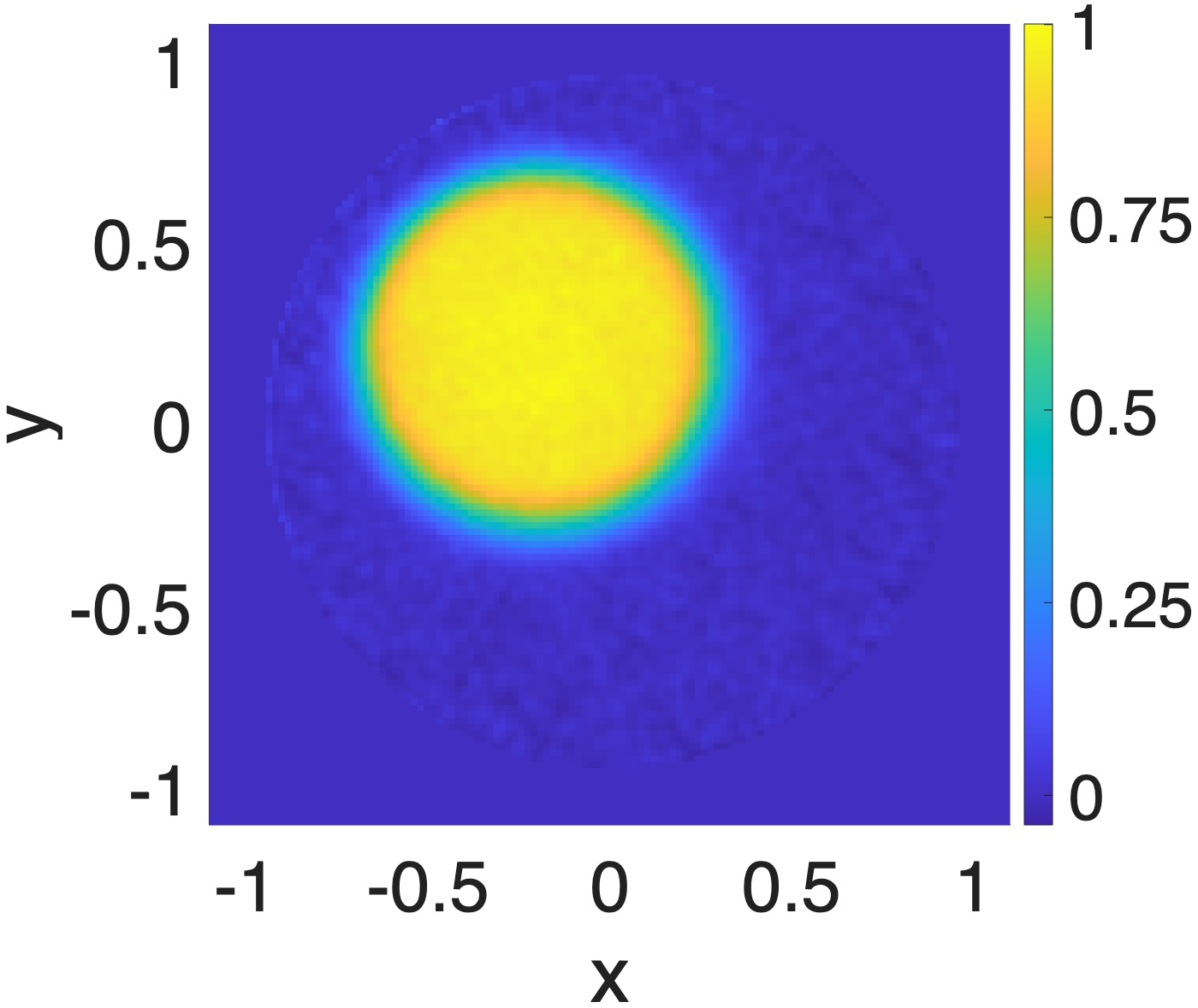}
        \caption{}
        \label{noisy-xy-slice}
    \end{subfigure}
    \begin{subfigure}{0.32\textwidth}
        \centering
        \includegraphics[width=\linewidth]{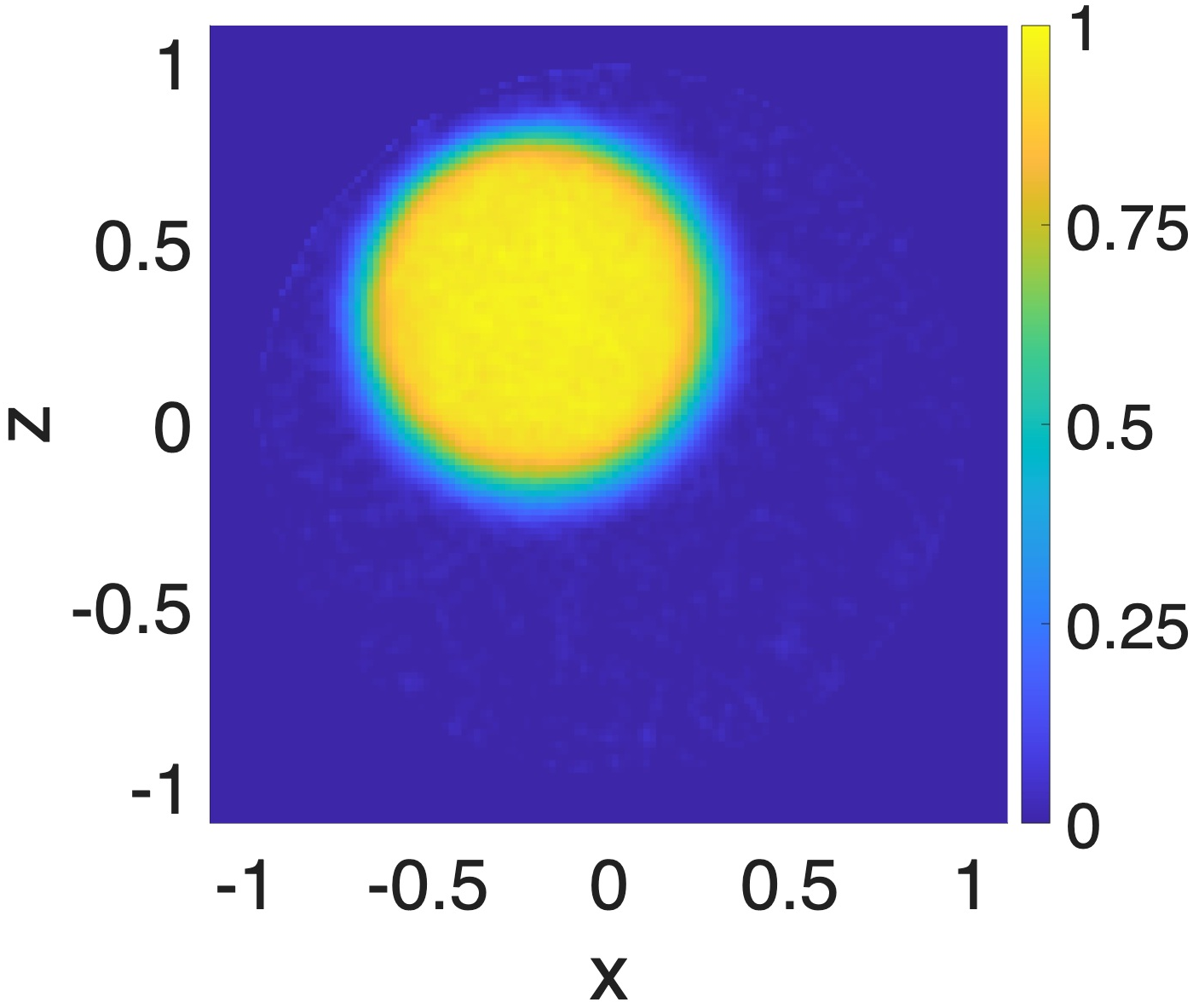}
        \caption{}
        \label{noisy-xz-slice}
    \end{subfigure}
    \begin{subfigure}{0.32\textwidth}
        \centering
        \includegraphics[width=\linewidth]{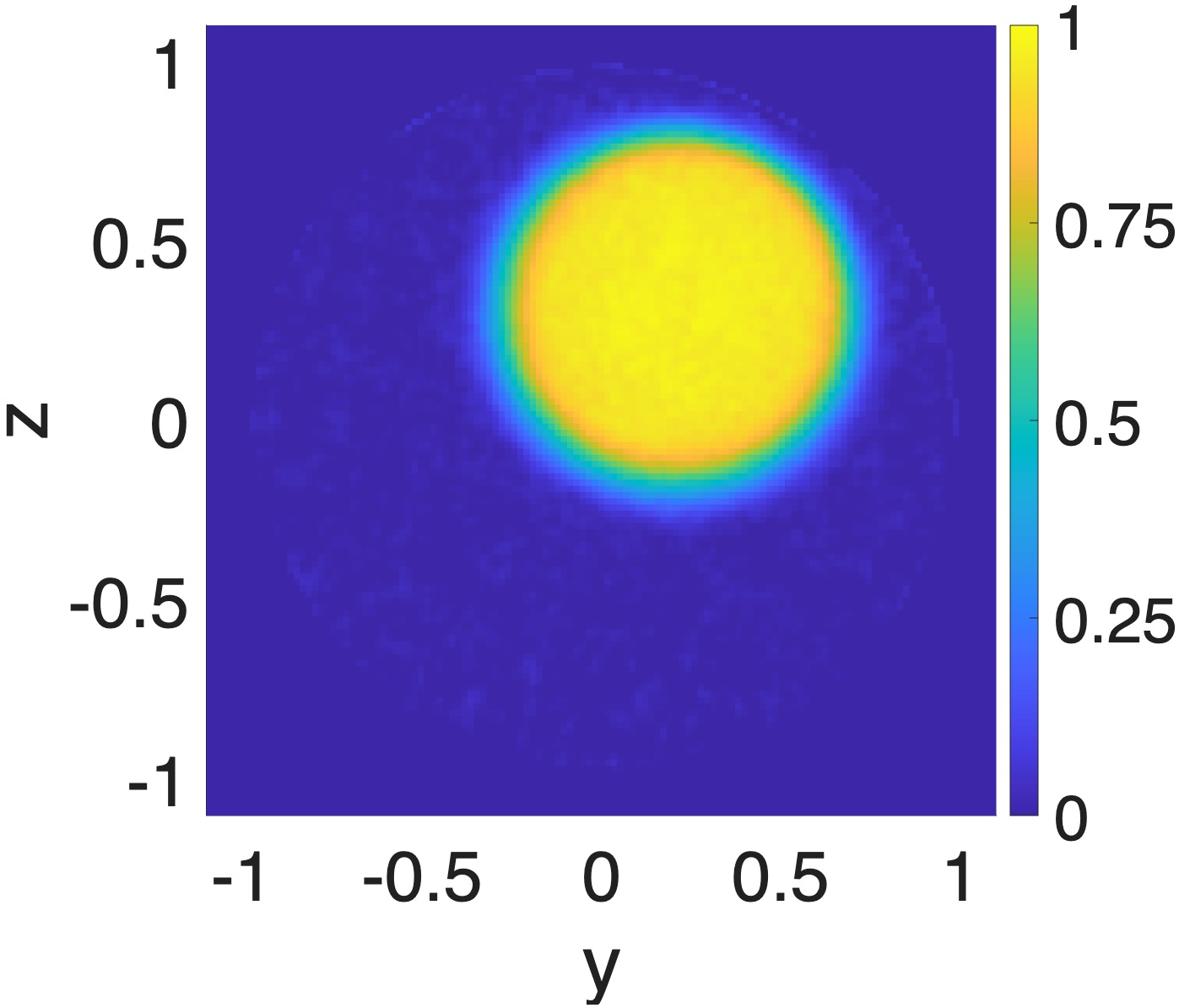}
        \caption{}
        \label{noisy-yz-slice}
    \end{subfigure}\\
    \begin{subfigure}{0.32\textwidth}
        \centering
        \includegraphics[width=\linewidth]{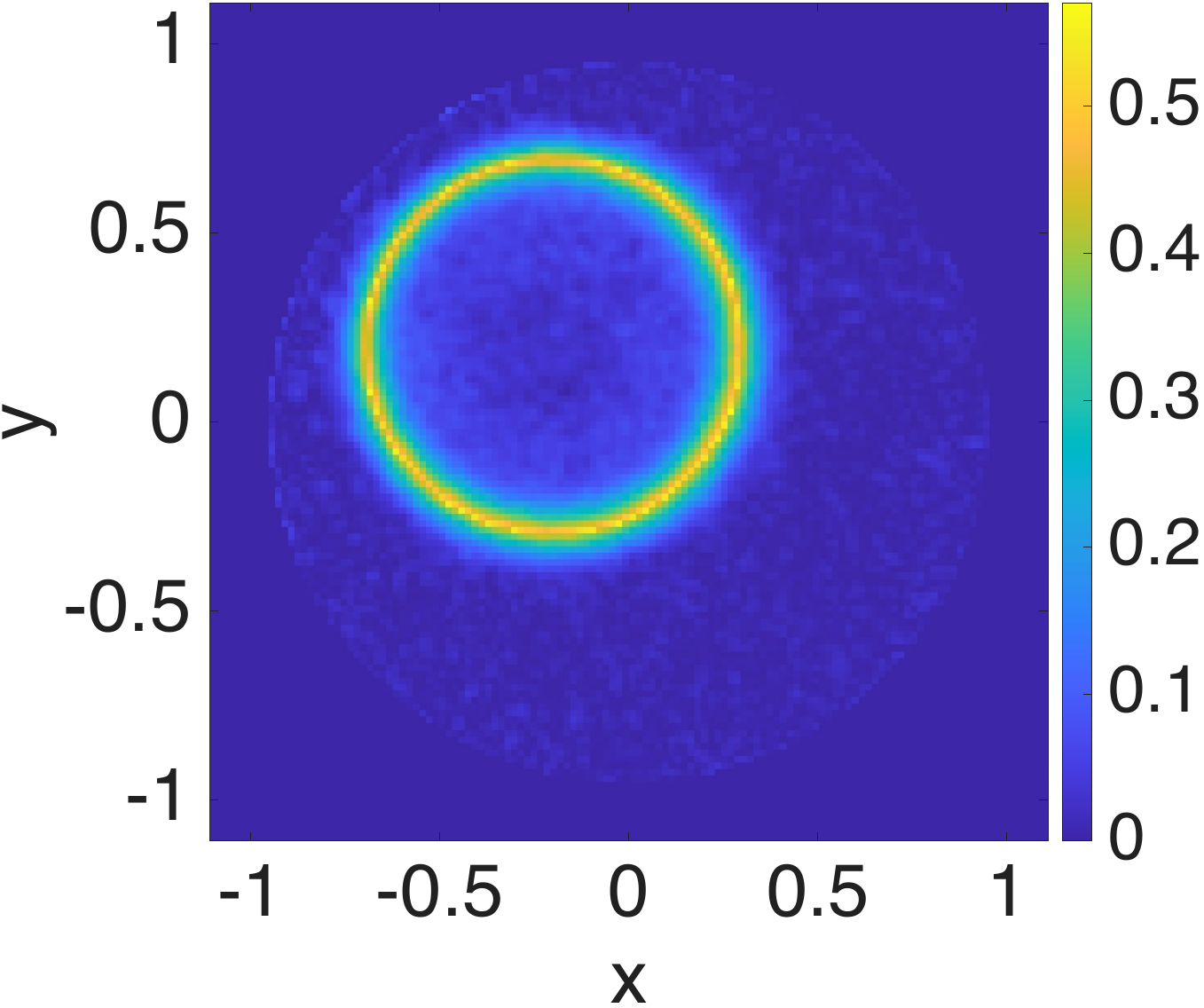}
        \caption{}
        \label{noisy-xy-error}
    \end{subfigure}
    \begin{subfigure}{0.32\textwidth}
        \centering
        \includegraphics[width=\linewidth]{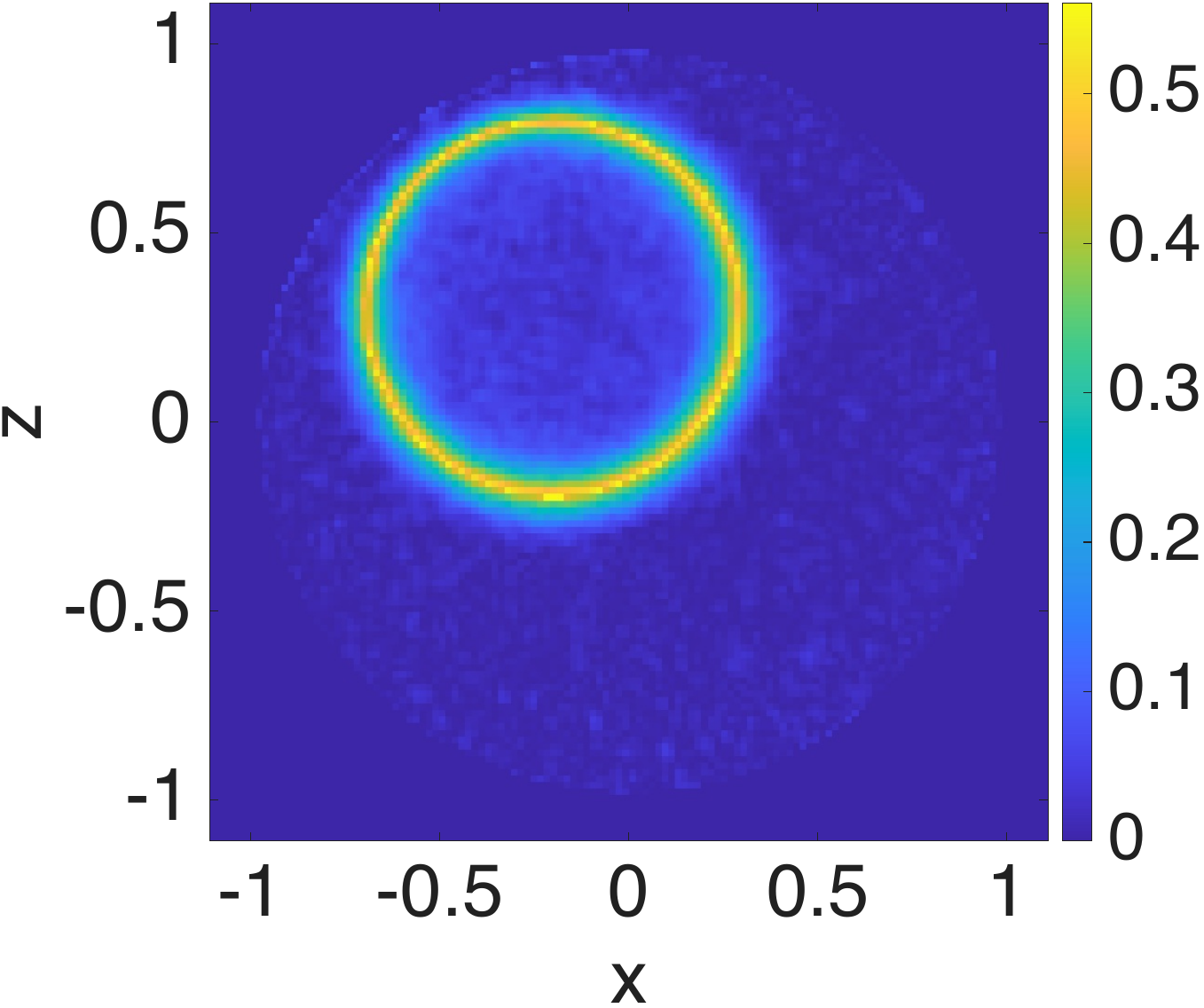}
        \caption{}
        \label{noisy-xz-error}
    \end{subfigure}
    \begin{subfigure}{0.32\textwidth}
        \centering
        \includegraphics[width=\linewidth]{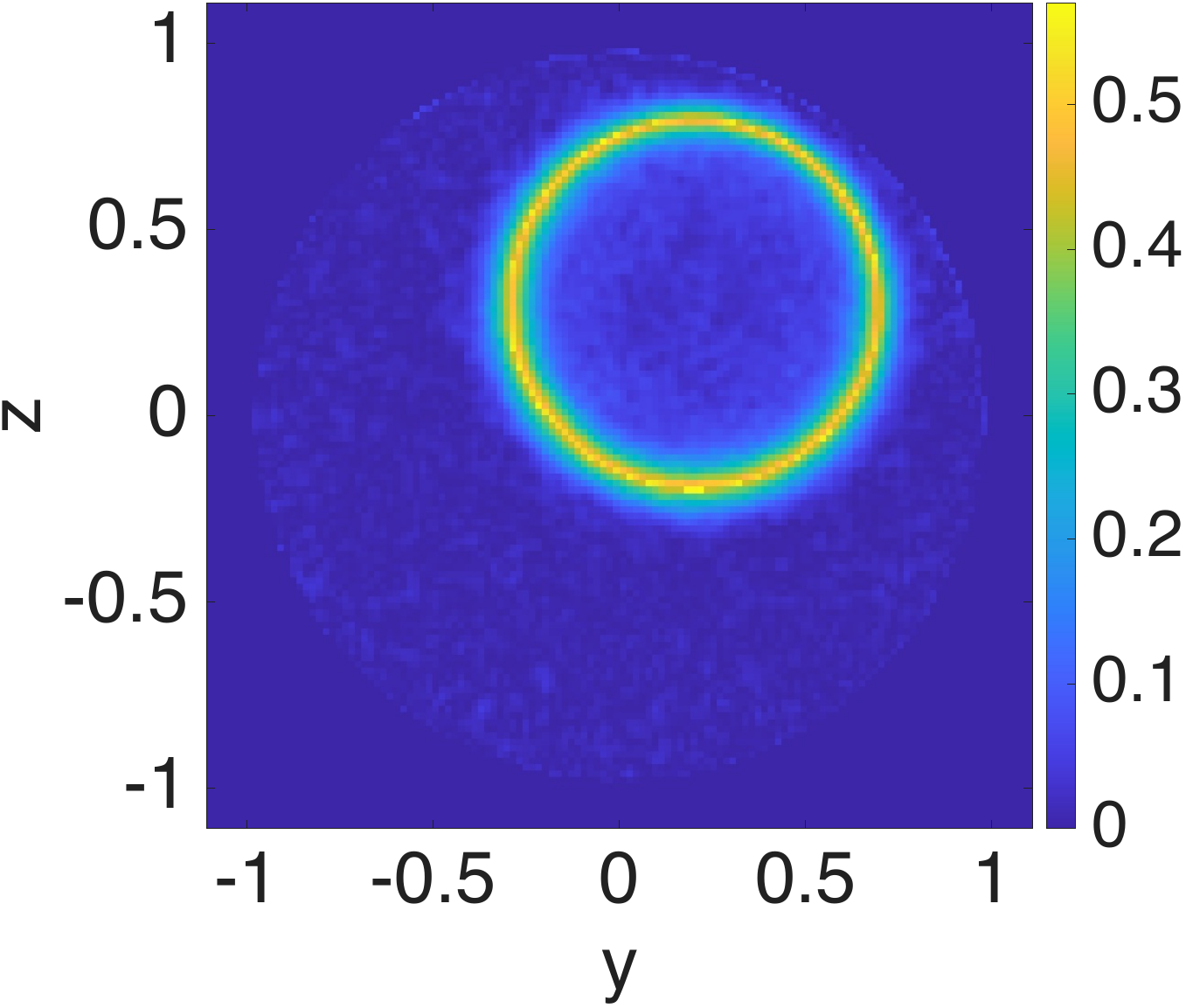}
        \caption{}
        \label{noisy-yz-error}
    \end{subfigure}

    \caption{
Top views of the cross-sectional slices through the center of the ball phantom. 
Columns correspond to the cross sections by the planes \(z=0.3\), \(y=0.2\), and \(x=-0.2\), respectively. 
The first row (a-c) shows the slices of the exact phantom. 
The second row (d-f) shows the slices of the reconstruction obtained from noisy cylindrical Radon transform data, while the third row (g-i) shows the absolute error between the phantom and reconstruction.}
    \label{fig: noisy slices}
\end{figure}

In Figure \ref{fig: noisy slices}, the plots show two-dimensional cross-sections of the reconstruction and absolute error in each coordinate direction for the noisy measurement case. The absolute error plots again illustrate that the largest discrepancies occur near the jump discontinuity at the boundary of the ball phantom.  

\begin{figure}[htbp]
    \begin{subfigure}{0.32\textwidth}
        \centering
        \includegraphics[width=\linewidth]{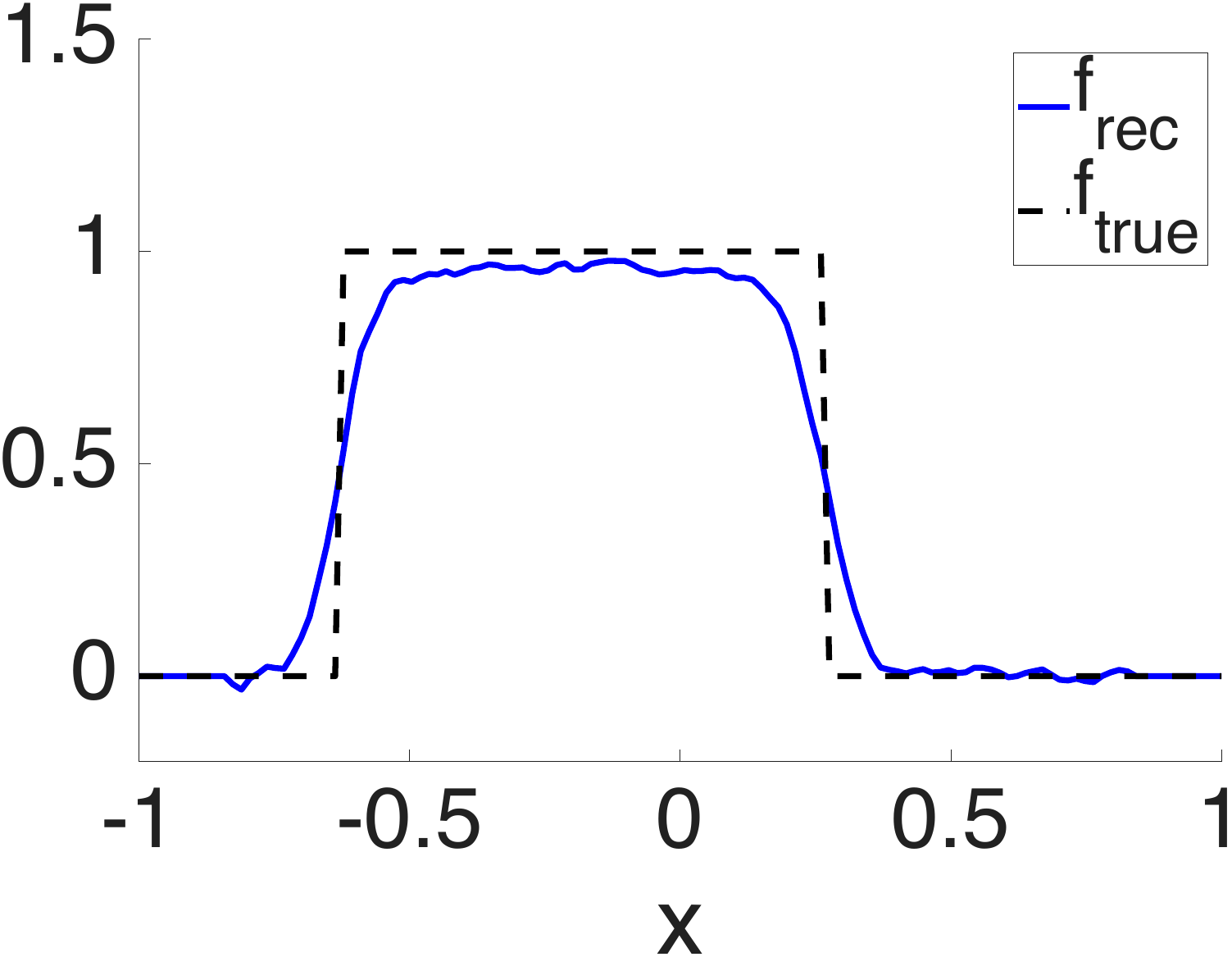}
        \caption{x-profile}
        \label{noisy-x-profile}
    \end{subfigure}
    \begin{subfigure}{0.32\textwidth}
        \centering
        \includegraphics[width=\linewidth]{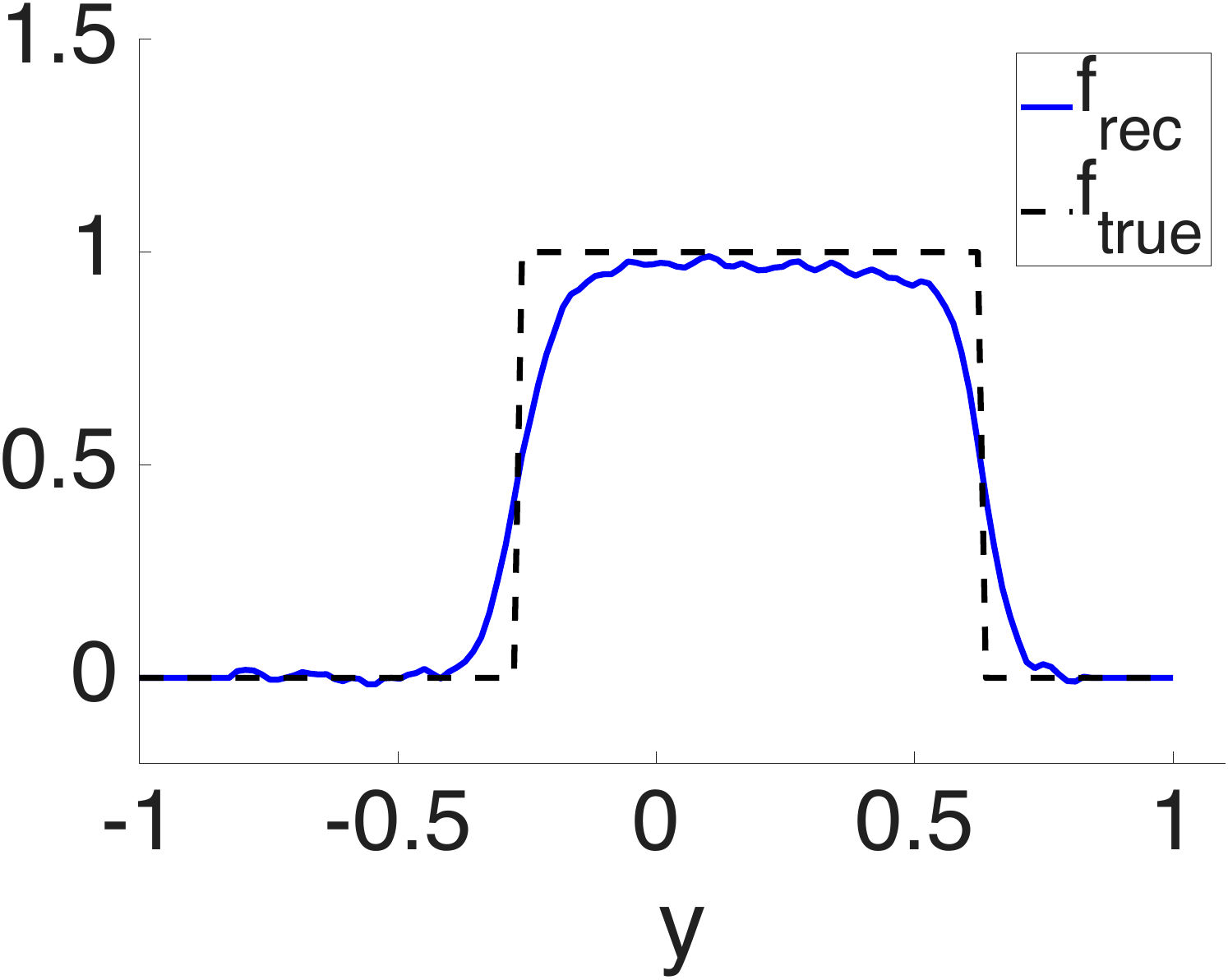}
        \caption{y-profile}
        \label{noisy-y-profile}
    \end{subfigure}
    \begin{subfigure}{0.32\textwidth}
        \centering
        \includegraphics[width=\linewidth]{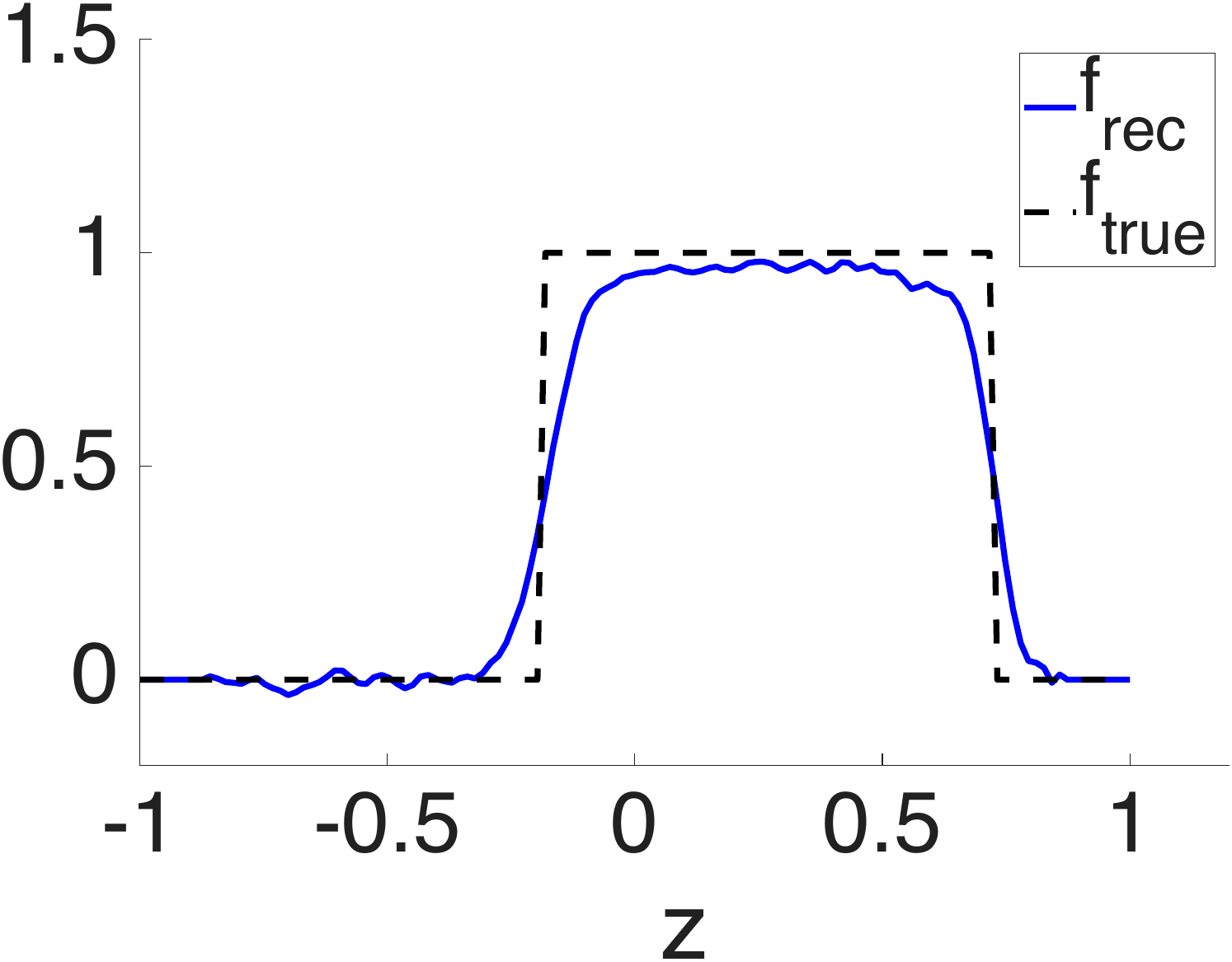}
        \caption{z-profile}
        \label{noisy-z-profile}
    \end{subfigure}

    \caption{One-dimensional line profiles through the center of the phantom comparing the exact phantom \(f_{\mathrm{true}}\) and the reconstruction obtained from noisy cylindrical Radon transform data (SNR \(20\) dB). Comparison of
(a) \(x\)-profiles at \((y,z)=(0.2,0.3)\). 
(b) \(y\)-profiles at \((x,z)=(-0.2,0.3)\). 
(c) \(z\)-profiles at \((x,y)=(-0.2,0.2)\).}
    \label{fig:noisy profile plots}
\end{figure}

Lastly, Figure \ref{fig:noisy profile plots} shows the one-dimensional profile plots obtained by slicing the images in Figure \ref{fig: noisy slices} through the center of the phantom. Again, we see the most discrepancies around the singularity of the phantom.

For the noisy data, we again used \eqref{L^2 error R^3} to find the relative \(L^2\)-error  and \(L^1\)-error which increased to \(0.3054\), and \(0.3802\), respectively. The relative maximum error is \(0.5810\). However, the center of mass error remained quite low at \(0.0044\). This suggests that the dominant errors are again concentrated near the discontinuity of the phantom.

Comparing the error from the noiseless and noisy cases in table \ref{tab: errors}, the error in the noisy case is slightly higher. The \(L^1\)-error and \(L^2\)-error increased by \(0.1138\) and \(0.0419\), respectively. The relative maximum error increased by \(0.0319\). Lastly, the center of mass error increased by \(0.0027\). Overall, noise does lead to a measurable degradation in reconstruction quality. However, the reconstruction remains stable as global geometric properties are preserved, and large errors are localized near the phantom singularities. 

\begin{table}[H]
\centering
\begin{tabular}{c|cc}
\hline
 Error Type & Noiseless CRT Data & Noisy CRT Data \\
\hline
Relative \(L^2\) & 0.2635 & 0.3054  \\
Relative \(L^1\) & 0.2664 & 0.3802  \\
Relative Maximum & 0.5491 & 0.5810 \\
Center of Mass & 0.0017 & 0.0044  \\
\hline
\end{tabular}
\caption{Comparison of reconstruction errors obtained from noiseless and noisy cylindrical Radon transform data.}
\label{tab: errors}
\end{table}

\section{Conclusion}\label{conclusion}
In this paper, we studied the analytic inversion of the cylindrical Radon transform corresponding to an overdetermined set of cylindrical surfaces in \(\mathbb{R}^n\). We analyzed its geometric properties and derived an integral identity relating the cylindrical Radon transform to the Radon and Funk transforms. This identity led to an analytical inversion method for the cylindrical Radon transform via the known Funk and Radon transform inversions. 

We also presented numerical experiments implementing the inversion formula for a ball phantom. The results demonstrate that the proposed algorithm is stable under discretization effects and moderate noise. However, noticeable reconstruction errors occur near the singularity of the phantom, indicating that the inversion algorithm tends to smooth sharp features near singularities.

In future work, we plan to incorporate edge-preserving techniques into the numerical algorithm to improve reconstruction accuracy near singularities. We are also interested in studying inversion from other families of cylinders that are relevant to photoacoustic tomography and yield complete data sets for reconstruction.

\section{Appendix}\label{appendix}
\textbf{Proof of Equation \eqref{eq:Rfball3D}:}\label{app:proof of 3D ball}
By translation invariance of the Radon transform \eqref{R(T_pf)=T_pRf},
\[
R\mathcal{X}_{B(c,t)}(\omega,s)
=
RT_{-c}\mathcal{X}_{B(0,t)}(\omega,s)
=
R\mathcal{X}_{B(0,t)}(\omega,s-\omega\cdot c).
\]
Let \(A\in SO(3)\) be a rotation such that \(A\omega=e_3\). Since $\mathcal{X}_{B(0,t)}$ is rotationally invariant, by equation \eqref{M_ARf=RM_Af}, we have
\begin{equation}\label{Rball=Rballorigin}
R\mathcal{X}_{B(c,t)}(\omega,s)
=
R\mathcal{X}_{B(0,t)}(e_3,s-\omega\cdot c).
\end{equation}
Using the definition of the Radon transform \eqref{Radon transform},
\[
R\mathcal{X}_{B(0,t)}(e_3,s-\omega\cdot c)
=\int_{\mathbb{R}^3}\mathcal{X}_{B(0,t)}(y)\delta(y\cdot e_3-(s-\omega\cdot c))\, dy=
\int_{|y|<t}
\delta(y\cdot e_3-(s-\omega\cdot c))\,dy.
\]
Writing \(y=(\bar y,y_3)\), where \(\bar y=(y_1,y_2)\), and setting
\[
\sigma=s-\omega\cdot c,
\]
gives
\[
R\mathcal{X}_{B(0,t)}(e_3,\sigma)
=
\int_{\mathbb R^2}\int_{\mathbb R}
\mathcal{X}_{B(0,t)}(\bar y,y_3)
\delta(y_3-\sigma)\,dy_3\,d\bar y.
\]
Integrating with respect to \(y_3\) yields
\[
R\mathcal{X}_{B(0,t)}(e_3,\sigma)
=
\int_{\mathbb R^2}
\mathcal{X}_{\{|\bar y|^2+\sigma^2\le t^2\}}(\bar y)
\,d\bar y.
\]
The integrand is the indicator function of a disk in \(\mathbb R^2\) with radius
\(\sqrt{t^2-\sigma^2}\) when \(|\sigma|<t\), and it is zero otherwise. Therefore,
\[
R\mathcal{X}_{B(0,t)}(e_3,\sigma)
=
\begin{cases}
\pi(t^2-\sigma^2), & |\sigma|<t,\\[2mm]
0, & \text{otherwise}.
\end{cases}
\]
Substituting \(\sigma=s-\omega\cdot c\) and recalling the equality in equation \eqref{Rball=Rballorigin} gives
\[
R\mathcal{X}_{B(c,t)}(\omega,s)
=
\begin{cases}
\pi\bigl(t^2-(s-\omega\cdot c)^2\bigr),
& |s-\omega\cdot c|<t,\\[2mm]
0,
& \text{otherwise},
\end{cases}
\]
which proves \eqref{eq:Rfball3D}.
\qed

\section*{Acknowledgements}
This work was partially supported by NSF DMS grant 2206279.\\

\bibliographystyle{siam}
\bibliography{Paperdraft}

\end{document}